\numberwithin{figure}{section}
\numberwithin{equation}{section}
\theoremstyle{definition}
\theoremstyle{plain}
\newtheorem{definition}{Definition}[section]
\newtheorem{remark}[definition]{Remark}
\newtheorem{proposition}[definition]{Proposition}
\newtheorem{theorem}[definition]{Theorem}
\newtheorem{corollary}[definition]{Corollary}
\newtheorem{conjecture}[definition]{Conjecture}
\newtheorem{lemma}[definition]{Lemma}
\newcommand{\Mod}{\mathrm{Mod}}
\newcommand{\eps}{\varepsilon}
\newcommand{\calD}{\mathcal{D}}
\newcommand{\calK}{\mathcal{K}}
\newcommand{\calH}{\mathcal{H}}
\newcommand{\calR}{\mathcal{R}}
\newcommand{\la}{\lambda}
\newcommand{\G}{\Gamma}
\newcommand{\g}{\gamma}
\newcommand{\D}{\Delta}
\renewcommand{\S}{\mathscr{S}}
\newcommand{\dist}{\mathrm{dist}}
\newcommand{\diam}{\mathrm{diam}}
\renewcommand{\mod}{\mathrm{mod}}
\DeclareMathAlphabet{\mathpzc}{OT1}{pzc}{m}{it}
\title[QS Embeddings of Slit Carpets]{Quasisymmetric Embeddings of Slit Sierpi\'nski Carpets}
\author{Hrant Hakobyan}
\address{Department of Mathematics, Kansas State University, Manhattan, KS, 66506-2602}
\thanks{H.~H. was partially supported by Simons Foundation Collaboration Grant, award ID: 638572.}
\email{hakobyan@math.ksu.edu}
\author{Wen-bo Li}
\address{Department of Mathematics, University of Toronto, 40 St. George Street, Toronto, Ontario, Canada M5S 2E4}
\email{wenboli@math.toronto.edu}
\date{\today}
\begin{document}

\maketitle

\renewcommand{\thefootnote}{\fnsymbol{footnote}} 
\footnotetext{{Keywords.} \emph{Sierpi\'nski carpet, Quasiconformal, quasisymmetric maps, transboundary modulus.}}     
\renewcommand{\thefootnote}{\arabic{footnote}} 

\renewcommand{\thefootnote}{\fnsymbol{footnote}} 
\footnotetext{{2010 Mathematics Subject Classification.} Primary 30C65,30L05,30L10;  Secondary 28A78}     
\renewcommand{\thefootnote}{\arabic{footnote}} 

\renewcommand{\thefootnote}{\fnsymbol{footnote}} 
\renewcommand{\thefootnote}{\arabic{footnote}} 

\begin{abstract}
We study the problem of quasisymmetrically embedding  spaces homeomorphic to the  Sierpi\'nski carpet into the plane.  In the case of so called dyadic slit carpets, several characterizations are obtained.  One characterization is in terms of a Transboundary Loewner Property (TLP) which is  a transboundary analogue of the Loewner property  of Heinonen and Koskela. We show that a dyadic slit carpet 
can be quasisymmetrically embedded into the plane if and only if it is TLP. Moreover, every dyadic slit carpet $X$ can be associated to a ``pillowcase sphere"  $\widehat{X}$ which is  a metric space homeomorphic to the sphere $\mathbb{S}^2$. We show that $X$ quasisymmetrically embeds into the plane if and only if $\widehat{X}$ is quasisymmetric to $\mathbb{S}^2$ if and only if $\widehat{X}$ is Ahlfors $2$-regular. 
\end{abstract}

\tableofcontents


\section{Introduction}

\subsection{Quasisymmetric planarity of metric carpets}
A metric space is said to be a \emph{metric (Sierpi\'nski) carpet} if it is homeomorphic to the classical Sierpi\'nski carpet $\mathbb{S}_{1/3}$, see Fig. \ref{standardcarpet}.  The study of quasisymmetric  geometry of metric carpets has received much attention recently, see e.g., \cite{Bonk,BonkKleinerMerenkov,BonkMerenkov,KleinerICM, Haisinsky,Merenkov,Merenkov Wildrick,MTW}. An important problem in this direction is to understand when a metric carpet admits a quasisymmetric embedding  into the complex plane $\mathbb{C}$, or is \emph{quasisymetrically (or QS) planar}. 
\begin{wrapfigure}{r}{0.3\textwidth}
    \includegraphics[width=0.33\textwidth]{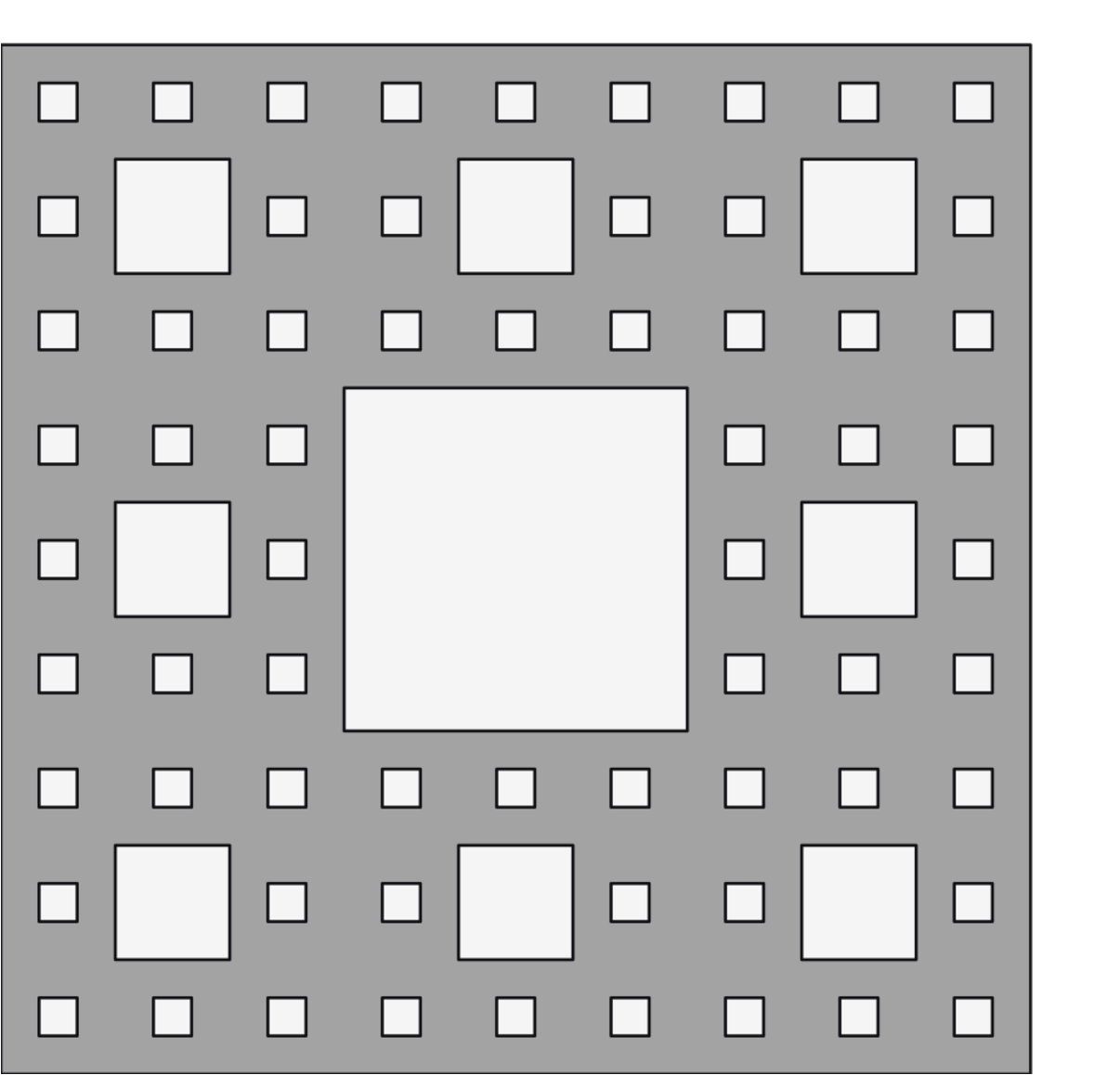}
  \caption{First three steps in the construction of the classical Sierpi\'nski carpet $\mathbb{S}_{1/3}$.}
  \label{standardcarpet}
\end{wrapfigure}
In this paper we give several characterizations of QS planarity for a class of metric carpets called \emph{dyadic slit carpets}, see Theorems \ref{thm:slit-carpets-TLP}, \ref{thm:surface}, and \ref{thm:main}.

The study of QS planarity of Sierpi\'nski carpets is partly motivated by Kapovich-Kleiner conjecture in geometric group theory. A metric carpet $X$ is called a \emph{group carpet} if it occurs as the boundary at infinity $\partial_{\infty}G$ of a Gromov hyperbolic group $G$. The above mentioned conjecture suggests that every group carpet is quasisymmetric to a \emph{round carpet}, that is metric carpet $X\subset\mathbb{C}$ such that every complimentary component of $X$ is a round disk.
 %
%
%
%

%

In a recent breakthrough Bonk \cite{Bonk} proved that a  metric carpet $X\subset\mathbb{C}$ is quasisymmetric to a round carpet provided some natural conditions are satisfied (see (\ref{def:uni-rel-sep}) and Section \ref{Section:background} for the terminology).

\begin{theorem}[Bonk's Theorem]\label{thm:Bonk}
Let $X\subset\mathbb{C}$ be a metric carpet whose peripheral circles are uniformly relatively separated uniform quasicircles. Then there is a quasisymmetric mapping $f:\mathbb{C}\to\mathbb{C}$ such that $Y=f(X)$ is a round carpet.
\end{theorem}

If $X$ is a group carpet then all the conditions of Theorem \ref{thm:Bonk} are satisfied except for planarity, i.e. $X\subset \mathbb{C}$, see \cite[Proposition 1.4]{Bonk}. Therefore, Kapovich-Kleiner conjecture would be true if every carpet boundary were QS planar. 



\subsection{QS planarity and Transboundary Lowner Property}
In \cite{Merenkov Wildrick} Merenkov and Wildrick showed that there is a metric carpet $X$, which is not QS planar (and thus is not quasisymmetric to a round carpet) even though all the conditions in Bonk's theorem hold, with the exception of $X$ being a subset of $\mathbb{C}$. Thus, one may wonder what additional conditions would imply QS planarity. 

In this paper we identify conditions which guarantee QS planarity of certain non-planar metric carpets which are obtained as limits of finitely connected planar domains (equipped with the inner path metric).
One such condition is in terms of the reciprocal of Schramm's transboundary extremal length \cite{Schramm}, nowadays called the transboundary modulus, and resembles the well-known Loewner property of Heinonen and Koskela \cite{HK:Acta}.

 \begin{definition}
Let $\Omega\subset\mathbb{C}$ be a simply connected domain, $\calK=\{k_i\}_{i=1}^m$ a family of disjoint, non-degenerate continua in $\Omega$. Given a function $\Psi:(0,\infty)\to(0,\infty)$, we say that the finitely connected domain $W=\Omega\setminus\cup_{i=1}^m k_i$ satisfies the  $(\Psi)$-\textsc{Transboundary Loewner Property}, or is $(\Psi)$-$\mathrm{TLP}$, with respect to the inner path metric $\delta_W$ on $W$,  if for every pair of disjoint continua $E,F\subset W$  we have  
\begin{eqnarray}\label{TLP:intro}
\Mod_{\Omega,\calK} \Gamma(E,F;\Omega) \geq \Psi \left(\Delta(E,F)\right).
\end{eqnarray}
\end{definition}

In (\ref{TLP:intro}) above $\Mod_{\Omega,\calK} \Gamma(E,F;\Omega)$ is the transboundary modulus of the path family connecting $E$ and $F$ in $\Omega$ with respect to $\calK$, and $\Delta(E,F)$ is the relative distance between $E$ and $F$ \textit{in the inner metric $\delta_W$ of $W$}. 
We refer to Sections \ref{Section:Modulus} and  \ref{Section:TLP} for the definition of transboundary modulus  and  further discussion of  TLP. In (\ref{TLP:intro}) above we used notation $\Mod_{\Omega,\calK}(\G)$ for the transboundary modulus of a path family $\G$ following Bonk \cite{Bonk} and Merenkov \cite{Merenkov-rel-Schottky}. 

\begin{remark}
\rm{Transboundary Loewner property may of course be considered not only for the inner metric $\delta_W$ but for any metric on $W$ (as long as the left-hand side of (\ref{TLP:intro}) is well-defined and is positive). Since we are mostly concerned with the applications to slit carpets, which are obtained as limits of domains equipped with the inner metrics, we do not consider the TLP with respect to any other metrics in this paper.} 
\end{remark}

The importance of transboundary Loewner property for us is that it is a necessary condition for QS planarity for multiply connected domains, see Theorem \ref{thm:finitely-connected-embedding}. For instance, we have the following corollary of Theorem \ref{thm:finitely-connected-embedding}. Recall, that a domain $\Omega\subset \mathbb{C}$ is a \emph{circle domain} if every boundary component of $\Omega$ is a round circle or a point.

\begin{theorem}\label{thm:finitely-connected-TLP-intro}
Suppose $W\subset\mathbb{C}$ is a finitely connected domain. If there is an $\eta$-quasisymmetric map $f:(W,\delta_W)\to\mathbb{C}$ such that $f(W)$ is a circle domain then $W$ is $\Psi$-TLP, where $\Psi$ depends only on $\eta$ (and not on the number or size of the boundary components of $W$).
\end{theorem}

We say that a metric carpet is TLP if it is a limit of a sequence of uniformly TLP domains (appropriately defined). See Subsection \ref{Secion:TLP-intro} and in particular Definitions \ref{def:TLP-domains} and \ref{def:TLP-inverse-limits} for the precise definition of TLP carpets. 

One of our main results is that TLP is often also sufficient for QS planarity. In particular, we obtain the following characterization of QS planarity in the class of dyadic slit carpets (see the construction before Theorem \ref{thm:main}). 

\begin{theorem}\label{thm:slit-carpets-TLP}
 A dyadic slit carpet $X$ is QS planar if and only if it is TLP.
\end{theorem}

A key feature here is that Transboundary Loewner Property is an intrinsic \textit{quasisymmetrically invariant} condition. To our knowledge Theorem \ref{thm:slit-carpets-TLP} is the first instance when such a condition characterizes spaces which are QS planar. 

In \cite{Merenkov Wildrick} sufficient conditions for embedding a metric space homeomorphic to a planar domain into the plane were obtained. These conditions are not quasisymmetrically invariant and the authors explicitly asked for such a condition. One may observe that in the case of dyadic slit carpets TLP is equivalent to the conditions in \cite{Merenkov Wildrick}. However, there are many spaces for which TLP holds while the conditions in \cite{Merenkov Wildrick} do not.

To prove Theorem \ref{thm:slit-carpets-TLP} we obtain several other characterizations of QS planarity for dyadic slit carpets which we describe next.



%
%
%

\subsection{Pillowcase spheres}
A general method of obtaining sufficient conditions for a topologically planar metric space $X$ to admit a quasisymmetric embedding into the plane is by means of the celebrated uniformization theorem of Bonk
 and Kleiner \cite{Bonk Kleiner}. The latter states that  a metric space that is homeomorphic to the $2$-sphere $\mathbb{S}^2$ is  in fact quasisymmetric to $\mathbb{S}^2$ (equipped with the spherical metric) provided it is Ahlfors $2$-regular and is linearly locally connected. Here, linear local connectivity (or LLC) is a quasisymmetrically invariant condition that is necessary for a space to be quasisymmetric to $\mathbb{S}^2$, see Section \ref{Section:embedding}.  Also, a metric measure space $(X,d,\mu)$ is Ahlfors $Q$-regular if there is a constant $C\geq 1$ such that for every ball $B(x,r)\subset X$ the following inequalities hold:   
 \begin{align}
 C^{-1}r^Q\leq \mu(B(x,r))\leq C r^Q.
 \end{align}   
 
In view of Bonk and Kleiner's theorem,  a metric carpet $X$ can be quasisymmetrically embedded into $\mathbb{R}^2$ if it is possible to construct a metric sphere $\widehat{X}$ containing $X$ which is LLC and Ahlfors $2$-regular. 
For a slit carpet $X$ there is a natural way of constructing a metric sphere $\widehat{X}\supset X$ by gluing in topological disks, or ``pillowcases", to the slits of $X$, see Figure \ref{figure:slit_domain} and Section \ref{Section:gluing} and then ``doubling" the resulting topological disk along the boundary square. The resulting ``pillowcase sphere" $\widehat{X}$ is always linearly locally connected. Therefore, $X$ can be quasisymmetrically embedded into the plane if $\widehat{X}$ is Ahlfors $2$-regular. We show that the converse is also true.

\begin{figure}[t]
	\centering
	\includegraphics[height = 1.5in]{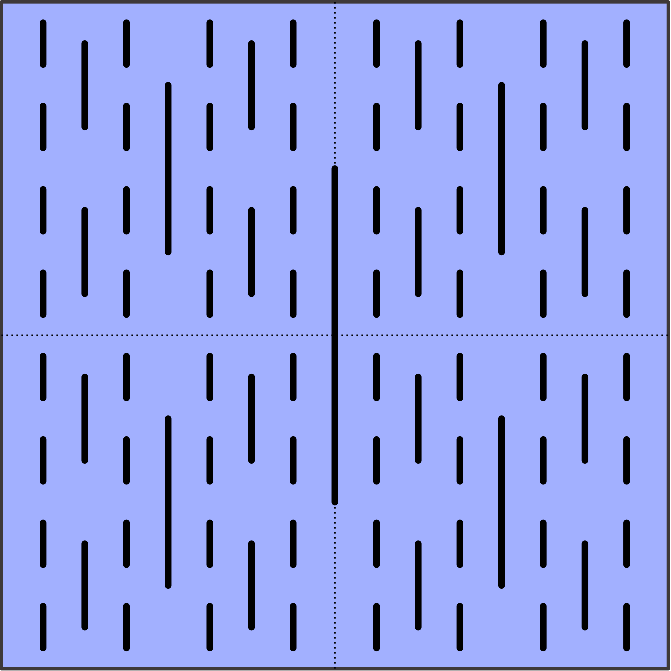}
\qquad
	\includegraphics[height = 1in]{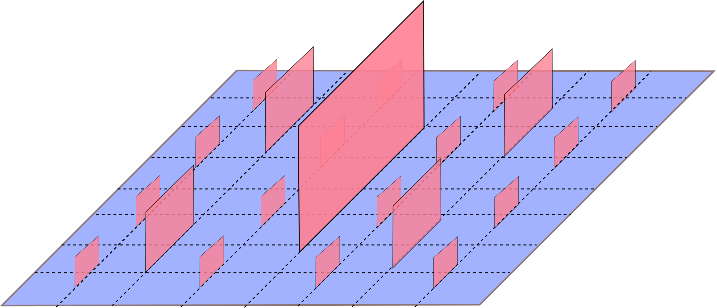}
	\caption{{Slit domain $D$ (left) corresponding to the sequence $(1/2, 1/2, 1/2, 1/2)$ and the corresponding ``pillowcase surface" $\widehat{D}$ (right). Every point of a slit $s\subset \partial D$ which is not an endpoint gives rise to two points in the closure $\overline{D}$ in the inner length metric and therefore topologically $s$ is a circle in $\overline{D}$. Similarly, every square in $\widehat{D}$ attached to a slit of $D$ can be thought of as a disk obtained from two copies of a square identified along three edges (hence, a ``pillowcase") that is glued to $\overline{D}$ along the other two edges. A ``pillowcase  sphere" is obtained by gluing two copies of $\widehat{D}$ along the ``outer square" boundary.}}
\label{figure:slit_domain}
\end{figure}

\begin{theorem}\label{thm:surface}
Let $X$ be a dyadic slit carpet and $\widehat{X}$ a ``pillowcase sphere"
corresponding to $X$. Then the following conditions are equivalent:
\begin{enumerate}
\item $X$ is quasisymmetric to a subset of $\mathbb{R}^2$.
\item $\widehat{X}$ is quasisymmetric to $\mathbb{S}^2$.
\item $\widehat{X}$ is Ahlfors $2$-regular.
\end{enumerate}
\end{theorem}

As explained above the implication $(3)\Rightarrow(2)$ follows from the theorem of Bonk and Kleiner. $(2)\Rightarrow(1)$ follows from the construction of $\widehat{X}$ in Section \ref{Section:gluing}. Thus,  the main novelty of Theorem \ref{thm:surface} is  the implication $(1)\Rightarrow(3)$. 

%
%
%


Next we define the dyadic slit carpets precisely. Let $\mathbf{r}=\{r_i\}_{i=0}^{\infty}$ be a sequence of real numbers such that $r_i\in(0,1), i\geq 0$. Construct a nested sequence of domains $D_i$ in the plane corresponding to $\mathbf{r}$ as follows: Let $D_0$ denote the domain obtained by removing the closed vertical slit (interval) of length $r_0$ centered at $(1/2,1/2)$ from  $(0,1)^2$. Similarly $D_1$ is obtained by removing from $D_0$ the $4$ vertical slits of length $r_1/2$, which are located in the dyadic squares of generation $1$ and whose centers are at the centers of the corresponding squares. Continuing by induction we obtain a sequence of domains $D_{i+1}\subset D_i$ in the unit square $(0,1)^2$. Next, consider the sequence of metric spaces $S_i$, where $S_i$ is the completion of the domain $D_i$ in its inner path metric $d_{D_i}$. It turns out that the spaces $S_i$ converge  (in an appropriate sense) to a metric carpet, which we denote by $\S=\mathscr{S}_{\mathbf{r}}$ and call the \emph{dyadic slit carpet corresponding to $\mathbf{r}$}. The following is the main result of this paper.

\begin{theorem}\label{thm:main}
	Suppose $\mathscr{S}_{\mathbf{r}}$ is a dyadic slit carpet corresponding to a sequence $\mathbf{r}=\{r_i\}_{i=0}^{\infty}$. There is a quasisymmetric embedding of $\mathscr{S}_{\mathbf{r}}$ into the plane if and only if $\mathbf{r} \in \ell^2$.
\end{theorem}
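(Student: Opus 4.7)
The plan is to prove Theorem~\ref{thm:main} in two directions separately. For the sufficient direction ($\mathbf{r}\in\ell^2\Rightarrow\text{QS embedding exists}$), I would construct an explicit planar carpet QS equivalent to $\S_\mathbf{r}$ by inflating each slit into a thin rectangle. For the necessary direction, I would exploit the quasisymmetric invariance of Schramm's transboundary $2$-modulus of a natural horizontal curve family.

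For the sufficiency, fix a small parameter $\delta>0$ and replace each slit of length $\ell_j=r_i/2^i$ in its generation-$i$ dyadic square by a closed axis-parallel rectangle of dimensions $\ell_j\times\delta\ell_j$ centered on the slit. The total area of these rectangles is $\delta\sum_i 4^i(r_i/2^i)^2=\delta\sum r_i^2$, finite by hypothesis, so for $\delta$ small enough they are pairwise disjoint inside their dyadic squares, and their complement in $(0,1)^2$ closes up to a genuine planar carpet $\widetilde{\S}\subset\mathbb R^2$. I would define a natural homeomorphism $\Phi\colon\S_\mathbf{r}\to\widetilde{\S}$ by sending each peripheral circle of $\S_\mathbf{r}$ (the two identified sides of a slit in the inner path metric) to the boundary of the corresponding rectangle. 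Its local QS distortion is universally bounded because the rectangles have the same aspect ratio $\delta$ at every dyadic scale; standard checks using the Ahlfors $2$-regularity and linear local connectivity of both carpets then upgrade local to global quasisymmetry.

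For the necessity, suppose $f\colon\S_\mathbf{r}\to\mathbb R^2$ is $\eta$-QS and let $\Gamma$ be the family of curves in $\S_\mathbf{r}$ joining the left and right outer edges. On the target side, $f(\S_\mathbf{r})$ is a planar carpet and the transboundary $2$-modulus of $f\Gamma$ coincides with the classical conformal modulus of $f\Gamma$ in the filled topological quadrilateral --- a dimensional constant $\asymp 1$. By the quasisymmetric invariance of transboundary modulus, $\tmod(\Gamma,\S_\mathbf{r})\asymp 1$. On the source side, a Fubini argument over straight horizontal lines $\{y\}\times[0,1]$ combined with Cauchy--Schwarz gives any admissible pair $(\rho,\{m_j\})$ the inequality
\[
\|\rho\|_{L^2(D_\infty)}^2+\sum_j m_j^2 \ \geq\ \frac{1}{1+\sum_{i=0}^\infty r_i^2},
\]
so $\tmod(\Gamma,\S_\mathbf{r})\geq(1+\sum r_i^2)^{-1}$. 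The crux is to produce a matching upper bound $\tmod(\Gamma,\S_\mathbf{r})\leq C(1+\sum r_i^2)^{-1}$ by exhibiting a suitable admissible pair. Combining such an upper bound with $\tmod\asymp 1$ forces $\sum r_i^2<\infty$.

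I expect the matching upper bound on the transboundary modulus to be the main obstacle. The natural choice of $m_j$ proportional to $\ell_j$ gives cost $\sum\ell_j^2=\sum r_i^2$, which goes the wrong way when $\sum r_i^2$ is large; the right test pair instead distributes mass inversely proportional to $\sum r_i^2$, bringing the cost down to $\sim 1/\sum r_i^2$ but losing admissibility on a small exceptional set of horizontal heights. Restoring pointwise admissibility requires a carefully chosen correction $\rho$ whose support shrinks to measure zero precisely when $\sum r_i^2=\infty$; this is where the dyadic self-similarity of the construction and the $\ell^2$ summability enter crucially. A secondary difficulty in the sufficient direction is verifying the global quasisymmetry of $\Phi$, which I would address either by a direct scale-by-scale comparison of the two inner path metrics or by invoking Bonk's uniformization theorem applied to $\widetilde{\S}$.
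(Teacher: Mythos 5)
Both directions of your proposal contain genuine gaps. In the sufficiency direction, the inflation construction fails at the first step: the slits are dense in $[0,1]^2$ (this is forced by the construction, since every dyadic square of every generation carries a slit), so \emph{any} rectangle of positive area centered on a generation-$i$ slit contains infinitely many slits of deeper generations, no matter how small the fixed aspect ratio $\delta$ is. Concretely, a generation-$(i+k)$ slit can lie within horizontal distance $2^{-i-k-1}$ of the center line of a generation-$i$ square, which is eventually much smaller than the half-width $\delta r_i2^{-i-1}$ of your generation-$i$ rectangle. Hence the rectangles cannot be made pairwise disjoint, and the target carpet $\widetilde{\S}$ does not exist as described; the finiteness of the total area $\delta\sum r_i^2$ is necessary but nowhere near sufficient for disjointness. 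The paper circumvents this entirely by never embedding into the plane directly: it glues a ``pillowcase'' disk onto each peripheral circle of the doubled carpet to form an abstract metric sphere $\mathscr{D}$, verifies that $\mathscr{D}$ is LLC and (using $\mathbf{r}\in\ell^2$ precisely to control the total area of the glued disks) Ahlfors $2$-regular, and then invokes the Bonk--Kleiner uniformization theorem.

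In the necessity direction your overall strategy matches the paper's, but the two essential ingredients are missing. First, the claim that a QS embedding forces $\Mod_{\mathbb{U}_0,\calK_n^0}(\G)\gtrsim 1$ uniformly in $n$ is not obtained by declaring the transboundary modulus of the image family ``a dimensional constant'': the transboundary modulus relative to arbitrary complementary components is not the classical modulus of the filled quadrilateral. One must first descend $f$ to uniformly quasiconformal embeddings $f_n$ of the finite precarpets $\S_n$ (via a Beurling--Ahlfors-type extension on each dyadic cell), then apply Bonk's cylinder uniformization to the image so that the complementary components become $\mathbb{C}^*$-squares, for which the transboundary modulus of radial families is computable, and finally use quasiconformal quasi-invariance of transboundary modulus for planar domains. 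Second, and more seriously, the matching upper bound that you explicitly defer --- showing $\Mod_{\mathbb{U},\calK_n}(\G)\to 0$ when $\mathbf{r}\notin\ell^2$ --- is the technical heart of the argument and is not supplied. The paper's construction assigns to each slit an $\eps$-collar of fixed aspect ratio, discards collars contained in larger ones (exactly the density issue above), sets $\rho=0$ on the ``omitted'' middle portion of each collar while charging each slit the weight $\eps\,l(s)$, proves admissibility by an explicit curve surgery around each collar, and establishes the mass bound $\prod_{i\le n}(1-\tfrac{1}{8}\eps r_i^2)+3\eps$ by an inductive area computation over dyadic squares. Without this construction (or an equivalent one), the necessity direction is a plan rather than a proof; note also that the true decay is of the exponential form $\exp(-c\eps\sum_{i\le n}r_i^2)+3\eps$, not the $(1+\sum r_i^2)^{-1}$ you conjecture.
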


Theorem \ref{thm:main} implies Theorem \ref{thm:surface}. Indeed, as explained above, one only needs to prove the implication $(1)\Rightarrow(3)$. However, if $(1)$ in Theorem  \ref{thm:surface} holds then by Theorem \ref{thm:main} $\mathbf{r}\in\ell^2$. On the other hand, in Section \ref{Section:embedding} it will be shown that if $\mathbf{r}\in\ell^2$ then $\widehat{X}=\widehat{\mathscr{S}}_{\mathbf{r}}$ is Ahlfors $2$-regular, which is condition $(3)$ in Theorem \ref{thm:surface}. Therefore, $(1)\Rightarrow(3)$ in Theorem \ref{thm:surface} holds.

\subsection{Quasisymmetric embedding and weak tangents}
One consequence of Theorem \ref{thm:main} is that a metric carpet may not admit a quasisymmetric embedding into the plane even if locally it ``looks" like $\mathbb{R}^2$.

\begin{corollary}
There is a metric carpet $X$ such that every weak tangent of $X$ is bi-Lipschitz to a subset of $\mathbb{R}^2$ but which cannot be quasisymmetrically embedded into $\mathbb{R}^2$.
\end{corollary}

We refer the reader to \cite{Burago} for the definition and the properties of weak tangents and Gromov-Hausdorff convergence. 

To obtain an example as in the statement above, one may pick a sequence $r_i$ which converges to $0$ but such that $\sum_{i}r_i^2=\infty$. Then, since $r_i\to0$, every weak tangent of $\mathscr{S}_{\mathbf{r}}$ is bi-Lipschitz to a subset of the plane, see \cite{Wenbo}. On the other hand, $\mathscr{S}_{\mathbf{r}}$ does not quasisymmetrically embed in $\mathbb{R}^2$ by Theorem \ref{thm:main}. 

Note that the self-similar slit carpet considered in \cite{Merenkov} looks very ``non-Euclidean" on all scales and in fact its weak tangents do not admit bi-Lipschitz embeddings into any uniformly convex Banach space, see \cite{David-Eriksson}.



%



\subsection{Round carpets}

A family of sets $\{K_i\}_{i=1}^{\infty}$ in a metric space is said to be \emph{uniformly relatively separated} if the relative distances between them are uniformly bounded from below, i.e., if  there is a constant $s>0$  such that for $i\neq j$ the following holds:
\begin{align}\label{def:uni-rel-sep}
 \Delta(K_i,K_j) = \frac{\textrm{dist}(K_i,K_j)}{\min\{\textrm{diam}(K_i), \textrm{diam}(K_j)\}}>s.
\end{align} 

A family of Jordan curves $\{\g_i\}_{i=1}^{\infty}$ in $\mathbb{C}$  is said to be a family of \emph{uniform quasicircles} if there is a $k\geq 1$, such that $\g_i$ is a $k$-quasicircle for every $i\geq 1$ (see (\ref{quasicircle}) in Section \ref{Section:background} for the definition of a $k$-quasicircle). 


By Theorem \ref{thm:main} if there is an embedding of $X$ into the plane then the slits of $X$ are relatively separated. Therefore by Bonk's theorem we obtain the following.

\begin{corollary}\label{cor:round-carpets}
Suppose $\S_{\mathbf{r}}$ is a dyadic slit carpet whose peripheral circles are uniformly relatively separated. Then $\S_{\mathbf{r}}$ is quasisymmetric to a round carpet if and only if $\mathbf{r}\in\ell^2$ if and only if $\S_{\mathbf{r}}$ is quasisymetric to a subset of the plane.
\end{corollary}

All the quasisymmetric images of slit carpets have positive $\calH^2$ measure, cf., \cite{Merenkov,Hakobyan}. Therefore all the round carpets which are quasisymmetric to slit carpets are of positive area. By \cite[Theorem 1.2]{BonkKleinerMerenkov}, for every positive area round carpet in $\mathbb{S}^2$ there are uncountably many conformally distinct round carpets which are all quasisymmetrically equivalent to it. In particular,  for every slit carpet $\mathscr{S}_{\mathbf{r}}$ such that $\mathbf{r}\in \ell^2$ there are uncountably many  round carpets which are not M\"obius images of each other, but which are all quasisymmetrically equivalent to $\mathscr{S}_{\mathbf{r}}$. 

 It would be interesting to know if there are quasisymmetric \emph{self maps} of $\mathscr{S}_{\mathbf{r}}$ which are not isometries. More generally, what can be said about the group of quasisymmetric automorphism of $\mathscr{S}_{\mathbf{r}}$? Is it finite, infinite, uncountable? In \cite{Merenkov} it was shown that every quasisymmetric self map of the slit carpet $\mathscr{S}_{\mathbf{r}}$  corresponding to the constant sequence $r_i=1/2$ is in fact an isometry. 

\subsection{Quasisymmetric embeddings and Loewner carpets} From Theorem \ref{thm:main} and results in \cite{Hakobyan} it follows that property of being quasisymmetrically embeddable into the plane is related to the Loewner property of Heinonen and Koskela, which we recall next.

Suppose $(X,d,\mu)$ is an Ahlfors $Q$-regular metric measure space for some $Q>1$. $X$ is \emph{$Q$-Loewner} if there is a function $\phi:(0,\infty)\to(0,\infty)$ such that  for every pair of continua $E$ and $F$ in $X$ the following holds:
\begin{align*}
\mod_Q \left( \G(E,F;X) \right) \geq \phi(\Delta(E,F)),
\end{align*}
where $\G(E,F;X)$ is the family of curves connecting $E$ and $F$ in $X$ and $\Delta(E,F)$ is the relative distance between $E$ and $F$.

Loewner spaces have been introduced by Heinonen and Koskela in \cite{HK:Acta} and since then have been studied extensively. We will say a metric space is a \emph{Loewner carpet} if it is homeomorphic to the Sierpi\'nski carpet and is $Q$-Loewner for some $Q\geq1$.

Recently, Cheeger and Eriksson-Bique \cite{Cheeger-Eriksson}  showed that every $Q$-Loewner carpet  can be quasisymmetrically embedded into the plane, provided $1<Q<2$. 

On the other hand, a $Q$-Loewner space cannot be quasisymmetrically mapped to a space of Hausdorff dimension less than $Q$ by a theorem of Tyson, see \cite[Theorem, 15.10]{Heinonen}. Therefore, a $Q$-Loewner carpet cannot be embedded into the plane if $Q>2$.

The borderline case of $2$-Loewner carpets is not yet understood completely. However, in the case of dyadic slit carpets we have the following. 

\begin{corollary}\label{QS-embed-and-PI}
Every $2$-Loewner dyadic slit carpet $\mathscr{S}_{\mathbf{r}}$ admits a quasisymmetric embedding into $\mathbb{R}^2$.
\end{corollary}
\begin{proof}
Suppose $\mathscr{S}_{\mathbf{r}}$ cannot be quasisymmetrically embedded in the plane. By Theorem \ref{thm:main} it follows that $\mathbf{r}\notin \ell^2$. In \cite[Theorem 12.3]{Hakobyan} it was shown that if $\mathbf{r}\notin\ell^2$ then the $2$-modulus of curves connecting the right and left ``vertical edges" in $\mathscr{S}_{\mathbf{r}}$ vanishes. Hence, $\mathscr{S}_{\mathbf{r}}$ is not $2$-Loewner.
\end{proof}

In view of Corollary \ref{QS-embed-and-PI} and the results in \cite{MTW} on non-self-similar square carpets, it is natural to expect that $\mathscr{S}_{\mathbf{r}}$  can be quasisymmetrically embedded in $\mathbb{R}^2$ if and only if it is $2$-Loewner (or even admits a $p$-Poincar\'e inequality for all $p>1$). To prove this one needs to show that if $\mathbf{r}\in\ell^2$ then $\mathscr{S}_{\mathbf{r}}$ is $2$-Loewner, i.e., there are uniform lower bounds on $\mod_2\left( \G(E,F;X) \right)$ for all compact connected subsets $E$ and $F$ in $\mathscr{S}_{\mathbf{r}}$. We do not establish such bounds in this paper.


%
%
%

\subsection{General slit carpets and domains} General slit carpets corresponding to a sequence of slits $\{v_i\}_{i=1}^{\infty}$ in a Jordan domain (or a quasidisk) $\Omega\subset\mathbb{C}$ can be constructed the same way as the dyadic slit carpets. Namely, if the closures of the slit domains $D_i=R\setminus \cup_{j=1}^i v_i$ in the inner metric converge to a metric space $X$ that is homeomorphic to the Sierpi\'nski carpet we call $X$ a (general) \emph{slit carpet}.

It is natural to ask what can be said about quasisymmetric embeddability of general slit carpets. Theorem \ref{thm:surface} suggests that one may answer this in terms of the ``pillowcase" surface $\widehat{X}$, which can be constructed for every slit carpet ${X}$ just like in Section \ref{Section:gluing}. Furthermore, the conditions of being quasisymmetric to round carpets or having Loewner property also can be formulated for any slit carpet. However, it is not hard to see that the slits need to be uniformly relatively separated in order for either of these conditions to hold. Therefore, we believe the following is true.
 
\begin{conjecture}\label{conjecture}
Let $X$ be a general slit carpet constructed in a quasidisk $\Omega\subset \mathbb{C}$. Then the following conditions are equivalent: 
\begin{enumerate}
\item $X$ is quasisymmetric to a subset of $\mathbb{R}^2$.
\item $\widehat{X}$ is quasisymmetric to $\mathbb{S}^2$.
\item $\widehat{X}$ is Ahlfors $2$-regular.
\item $X$ is TLP
\end{enumerate}
Moreover, if the peripheral circles of $X$ are uniformly relatively separated then
\begin{itemize}
\item[$(5)$] $X$ is quasisymmetric to a round carpet.
\item[$(6)$] $X$ is $2$-Loewner.
\end{itemize}
\end{conjecture}

By Theorem \ref{thm:surface} and Corollaries \ref{cor:round-carpets} and  \ref{QS-embed-and-PI}  the following implications hold in the case of dyadic slit carpets (some are known without the assumption of relative separation)
\begin{align*}
(1)\Leftrightarrow (2)\Leftrightarrow (3)  \Leftrightarrow(4) \Leftrightarrow(5)\Leftarrow(6). \tag{Dyadic slit carpets}
\end{align*}
On the other hand, for general slit carpets only the implications below are known (assuming relative separation where appropriate),
\begin{align*}
(1)\Leftarrow (2)\Leftarrow (3), \,\, (1)\Leftrightarrow (5). \tag{General slit carpets}
\end{align*}

One may also formulate the analogue of Conjecture \ref{conjecture} for slit domains. Note that for such a domain $X$ one can construct the corresponding pillowcase surface $\widehat{X}$. Also, condition $(5)$ above should be reformulated for domains as follows:
\begin{itemize}
\item[$(5')$] $X$ is quasisymmetric to a circle domain.
\end{itemize}
Here a circle domain is one whose every boundary component is either a round circle or a point. In this generality $(5')$ and $(6)$ are not equivalent, since there are circle domains which are not Loewner (e.g., there are Cantor sets whose complement is not Loewner). However, if the collection of point boundary components is small enough (e.g., countable), then Loewner condition may still be equivalent to $(4')$. Thus the analogue of Conjecture \ref{conjecture}  for countably connected slit domains can be formulated as follows.

\begin{conjecture}[\textsc{QS Koebe Uniformization for slit domains}]
Let $X\subset \mathbb{C}$ be a countably connected slit domain with uniformly relatively separated boundary components. Then the equivalences below hold
\begin{align*}
(1)\Leftrightarrow (2)\Leftrightarrow (3)  \Leftrightarrow(4) \Leftrightarrow(5') \Leftrightarrow(6). \tag{General slit domains}
\end{align*} 
\end{conjecture}

Quasisymmetric uniformization by circle domains was considered in \cite{Merenkov Wildrick} where general sufficient conditions were obtained. However these conditions do not allow to conclude $(6) \Rightarrow (5')$ above. 



It would be interesting to know if TLP characterizes QS planarity for more general metric carpets. We say that $X$ is a \emph{planar inverse limit} if it is an inverse limit of finitely connected planar domains $W_i\subset \mathbb{C}$ equipped with the inner path metric $d_{W_i}$ (or any conformal metric $d_i$), see Subsection \ref{Section:limits-of-domains}. We believe the following generalization of Bonk's theorem is true.

\begin{conjecture}
Suppose $X$ is a planar inverse limit carpet whose peripheral circles are uniformly relatively separated uniform quasicircles. Then $X$ is QS planar if and only if it is TLP.
\end{conjecture}

\subsection{Outline of the paper} 


This paper is organized as follows: In Section \ref{Section:background} we provide the necessary background material. In Section \ref{Section:Modulus} we define transboundary modulus and study some of its basic properties. 
In Section \ref{Section:TLP} we define the transboundary Loewner property and prove Theorem \ref{thm:finitely-connected-embedding}, which implies Theorem \ref{thm:finitely-connected-TLP-intro}. In Section \ref{Section:slit-carpets} we define the dyadic slit carpets and list some of their basic properties. Sections \ref{Section:necessary-condition} and \ref{Section:non-embedding} are devoted to the proof of necessity in Theorem \ref{thm:main}, i.e.,  if $\mathbf{r}\notin\ell^2$ then $\mathscr{S}_{\mathbf{r}}$ does not quasisymmetrically embed into the plane. In Section \ref{Section:embedding} we construct the ``pillowcase surface" $\widehat{\mathscr{S}}$ and prove the sufficiency in Theorem \ref{thm:main}. Finally, Theorem \ref{thm:slit-carpets-TLP} is proved in Section \ref{Section:proof}.

\section{Background}
\label{Section:background}

\subsection{Notations and Definitions}
Given a metric space $(X, d)$, a point $x \in X$ and $r>0$, we denote by $B(x,r)$ the open ball of radius $r$ centered at $x$, i.e., $B(x,r) = \{y \in X: d(x,y) < r \}.$ For a ball $B=B(x,r)\subset X$ and $\lambda>0$ we let $\lambda B = B(x,\lambda r)$.

The closed unit disk and its boundary circle in the Euclidean plane $\mathbb{R}^2$ will be denoted by $\mathbb{D}$ and $\partial \mathbb{D}$, respectively. 

If $E \subset X$, then the closure, interior and topological boundary of $E$ will be denoted by $\overline{E}$, $\mathrm{int}(E)$, and $\partial E$, respectively. The diameter of $E$ in $X$ and the distance between subsets $E$ and $F$ of $X$ are defined as follows:
\begin{align*}
  \textrm{diam}(E)&=\sup\{d(x,y) : x,y \in E\},\\
  \textrm{dist}(E,F)& = \inf\{d(x,y): x \in E, y \in F\}.
\end{align*}
Sometimes we will write $\diam_X(E)$ and $\dist_X(E,F)$  to emphasize the metric with respect to which these quantities are being calculated.

If $\textrm{diam}(E) > 0$ and $\textrm{diam}(F) > 0$, the \emph{relative distance between $E$ and $F$} is
\begin{align}
 \Delta(E,F) = \frac{\textrm{dist}(E,F)}{\min\{\textrm{diam}(E), \textrm{diam}(F)\}}.
\end{align}

Let $I$ be a finite or countable indexing set. A family $\calK=\{K_i\}_{i \in I}$ of subsets of $X$ is called \emph{$s$-relatively separated} for $s > 0$ if $\Delta(K_i,K_j) \geq s$ for every $i,j \in I, i \neq j$. The sets in $\mathcal{K}$ are said to be \emph{uniformly relatively separated} if they are $s$-relatively separated for some $s > 0$.

Everywhere in this paper we will denote by $\mathcal{H}^2$ the \emph{normalized Hausdorff $2$-measure} on $X$. More specifically, $\calH^2(E)=\lim_{\epsilon\to0} \calH^2_{\epsilon}(E)$, where
\begin{align*}
\calH^2_{\epsilon}(E) = \inf \left\{ \frac{\pi}{2}\sum_{i=1}^{\infty} r_i^2 : E\subset \bigcup_{i=1}^{\infty} B(x,r_i), \, r_i\leq \epsilon\right\}  
\end{align*}
This choice is made so that $\calH^2$ coincides with the $2$ dimensional Lebesgue measure $\mathcal{L}^2$ for subsets of the plane and for spaces isometric to these.

A metric space $(X, d)$ is said to be \emph{Ahlfors $2$-regular}, if there is a constant $C \geq 1$ such that
\begin{align}\label{def-ineq:Ahlfors}
  {r^2}/{C} \leq \calH^2(B(p,r)) \leq C \cdot r^2
\end{align}
for any $p \in X$ and $0 < r \leq \textrm{diam}(X)$. The constant $C$ in (\ref{def-ineq:Ahlfors}) will be called the \emph{Ahlfors regularity constant} of $X$. Sometimes, when the constants are not important, the upper and lower estimates of $\calH^2(B(x,r))$ in (\ref{def-ineq:Ahlfors}) will be written as
\begin{align*}
\calH^2(B(x,r))\lesssim r^2 \mbox{ and } \calH^2(B(x,r))\gtrsim r^2,
\end{align*}
respectively, while if both inequalities hold we may write $\calH^2(B(x,r))\asymp r^2$, instead of (\ref{def-ineq:Ahlfors}).

\subsection{Quasiconformal and quasisymmetric mappings}\label{Section:QS-definition}
Here we define the various classes of mappings we are going to work with and refer to \cite{Ahlfors}, \cite{Heinonen} and \cite{Vaisala} for further details and properties of these maps.

Let $f: X \to Y$ be a homeomorphism between two metric spaces $(X, d_X)$ and $(Y, d_Y)$. For a point $x \in X$ and $r > 0$, we define the \emph{linear dilatation of $f$ at $x$} as
\begin{align}
    H_f(x)=\limsup_{r \to 0}\frac{L_f(x,r)}{l_f(x,r)},
\end{align}
where
\begin{align*}
  L_f(x,r)&= \sup_{y}  \{ d_Y(f(x),f(y)) \, | \, d_X(x,y)\leq r\},\\
  l_f(x,r)&= \inf_{y}   \{ d_Y(f(x),f(y)) \, | \, d_X(x,y)\geq r\}.
\end{align*}

We say that a homeomorphism $f: X\to Y$ is \emph{(metrically) $H$-quasiconformal (or $H$-qc)} if
\begin{align*}
  \sup_{x \in X}H_f(x) \leq H
\end{align*}
for some $1\leq H <\infty$.
A map is quasiconformal if it is $H$-quasiconformal for some $H$.


A homeomorphism $f: X \to Y$ is called \emph{$\eta$-quasisymmetric}, where $\eta:[0, \infty) \to [0, \infty)$ is a given homeomorphism, if
\begin{align}\label{def:QS}
\frac{d_Y(f(x),f(y))}{d_Y(f(x),f(z))} \leq \eta \left(\frac{d_X(x,y)}{d_X(x,z)} \right)
\end{align}
for all $x, y, z \in X$ with $x \neq z$. The map $f$ is called \emph{quasisymmetric} if it is $\eta$-quasisymmetric for some \emph{distortion function} $\eta$.



Here are some useful properties of quasisymmetric maps, which will be used repeatedly in the paper, see \cite{Heinonen}.

\begin{lemma}\label{lemma:QSproperties} 
Suppose $f:X\to Y$ and $g:Y\to Z$ are $\eta$ and $\eta'$-quasisymmetric mappings, respectively.
\begin{itemize}
\item[(1).] The composition $f\circ g:X\to Z$ is an $\eta' \circ \eta$-quasisymmetric map.
\item[(2).] The inverse $f^{-1}:Y\to X$ is a $\theta$-quasisymmetric map, where $\theta(t) = 1/\eta(1/t)$.
\item[(3).] If $A$ and $B$ are subsets of $X$ and $A\subset B$, then
\begin{align}\label{diam}
\frac{1}{2\eta\left( \frac{\diam(B)}{\diam(A)} \right)} \leq \frac{\diam(f(A))}{\diam(f(B))} \leq \eta\left( \frac{2\diam(A)}{\diam(B)} \right).
\end{align}
\end{itemize}
\end{lemma}


%
%
%

The following result is elementary but will be useful for us.

\begin{lemma}\label{lemma:rel-dist-distortion}
Let $A,B$ be compact subsets of $X$ and $f:X\to Y$ a quasisymmetric mapping. Then
\begin{align}\label{ineq:rel-dist-distortion}
\frac{1}{2\eta(\Delta(A,B)^{-1})}\leq \D(f(A),f(B)) \leq \eta(2\D(A,B)).
\end{align} 
\end{lemma}

\begin{proof} Suppose $\diam f(A)\leq \diam f(B)$. 
Let $x,z\in A$ and $y\in B$, be such that $\dist(x,y) = \dist(A,B)$ and $\dist(x,z)\geq \diam (A) /2$ then
\begin{align*}
\Delta(f(A),f(B)) &= \frac{\dist(f(A),f(B))}{\diam f(A)} 
\leq \frac{\dist(f(x),f(y))}{\dist(f(x),f(z))}\\
&\leq \eta\left(\frac{\dist(x,y)}{\dist(x,z)}\right)
\leq \eta\left(2\frac{\dist(A,B)}{\diam A}\right)\\
&\leq \eta\left(2\frac{\dist(A,B)}{\min\{\diam A,\diam B \} } \right) = \eta(2\D(A,B)),
\end{align*}
which proves the right hand side of (\ref{ineq:rel-dist-distortion}). Applying the latter inequality to  $g=f^{-1}:Y\to X$ and using  part $(2)$  of Lemma \ref{lemma:QSproperties}, we obtain
\begin{align*}
\D(A,B) = \D(g(f(A),g(f(B)) 
\leq \theta(2\Delta(f(A),f(B))) = \frac{1}{\eta^{-1}\left(\frac{1}{2\Delta(f(A),f(B))}\right)}.
\end{align*}
Thus, $({2\D(f(A),f(B))})^{-1} \leq \eta \left( {\D(A,B)^{-1}}\right)$, which proves the left hand side of (\ref{ineq:rel-dist-distortion}).
%
\end{proof}

\subsection{Finitely connected domains bounded by quasicircles}

A \emph{quasicircle} is a quasisymmetric image of the unit circle $\partial \mathbb{D}$. The following well-known result of Tukia and V\"as\"al\"a \cite{Tukia Vaisala} provides a complete characterization of quasicircles.

\begin{proposition}\label{quasicircle}
A simple closed curve $\g\subset X$ is a quasicircle if and only if it is doubling and there is a constant $k\geq 1$ such that for every $x,y\in\g$ we have
\begin{equation}\label{quasiconstant}
\min\{\diam(\gamma_1),\diam(\g_2)\} \leq k \cdot d_X(x,y),
\end{equation}
where $\g_1$ and $\g_2$ are the two subarcs  of $\gamma$ with endpoints $x$ and $y$.
\end{proposition}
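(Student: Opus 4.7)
The plan is to prove the two implications separately. The forward direction (quasicircle $\Rightarrow$ doubling and bounded turning) follows directly from the distortion estimate (\ref{diam}), while the converse requires constructing a quasisymmetric parameterization of $\gamma$ by $\partial\mathbb{D}$.

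For the ``only if'' direction, suppose $\gamma=f(\partial\mathbb{D})$ with $f$ an $\eta$-quasisymmetric homeomorphism. Doubling of $\partial\mathbb{D}$ transfers to $\gamma$ by a standard ball-chaining argument based on (\ref{diam}). To establish (\ref{quasiconstant}), fix $x,y\in\gamma$, let $\alpha_1,\alpha_2\subset\partial\mathbb{D}$ be the two arcs with endpoints $f^{-1}(x),f^{-1}(y)$, and let $\alpha_i$ denote the shorter one. Then $\diam(\alpha_i)\leq 2\,|f^{-1}(x)-f^{-1}(y)|$, so applying the upper bound in (\ref{diam}) with $A=\{f^{-1}(x),f^{-1}(y)\}\subset B=\alpha_i$ yields $\diam(f(\alpha_i))\leq \eta(4)\,d_X(x,y)$; this is precisely (\ref{quasiconstant}) with $k=\eta(4)$.

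For the converse, my plan is to construct a homeomorphism $\phi:\partial\mathbb{D}\to\gamma$ by dyadic bisection on both sides, prove that it is $H$-weakly quasisymmetric, and then invoke Lemma \ref{quasirelation} to upgrade this to full $\eta$-quasisymmetry (legal because $\partial\mathbb{D}$ is connected and both spaces are doubling). Starting from two points $p_0,p_1\in\gamma$ that split $\gamma$ into two subarcs of comparable diameter, at each subsequent stage one bisects every dyadic arc into two subarcs whose diameters are comparable with constants depending only on the bounded-turning constant $k$ and the doubling constant $N$. Mapping these bisection points to dyadic roots of unity on $\partial\mathbb{D}$ defines $\phi$ on a dense set, and the bounded-turning hypothesis forces the diameters of the arcs at generation $n$ to tend to $0$, ensuring the continuous extension is a genuine homeomorphism.

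The main obstacle is verifying that $\phi$ is $H$-weakly quasisymmetric with $H=H(k,N)$. For $x,y,z\in\partial\mathbb{D}$ with $|x-y|\leq|x-z|$, I would locate the smallest dyadic arc on $\partial\mathbb{D}$ containing $\{x,z\}$ and the analogous arc containing $\{x,y\}$; doubling of $\gamma$ controls the diameter ratios of the corresponding arcs in $\gamma$ across nested generations, while bounded turning converts these arc-diameters into the chord distances $d_X(\phi(x),\phi(y))$ and $d_X(\phi(x),\phi(z))$. The delicate step is ensuring that the bisection ratios stay uniformly bounded away from $0$ and $1$ at every scale—this is exactly where doubling enters essentially, since without it the arcs could develop thin filaments whose endpoint distance is much smaller than their diameter, breaking the comparison between arc-diameters and chords that drives the weak quasisymmetry estimate.
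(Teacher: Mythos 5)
The paper offers no proof of this proposition: it is quoted as a known theorem of Tukia and V\"ais\"al\"a with a citation to \cite{Tukia Vaisala}, so you are in effect reproving a result the authors deliberately import. Your forward direction is essentially correct, except that you invoke the wrong half of (\ref{diam}): with $A=\{f^{-1}(x),f^{-1}(y)\}\subset B=\alpha_i$, the right-hand (upper) bound in (\ref{diam}) controls $\diam(f(A))/\diam(f(B))$ from above, i.e.\ it gives a \emph{lower} bound on $\diam(f(\alpha_i))$ in terms of $d_X(x,y)$, which is the opposite of what you need. The left-hand inequality is the correct one; since $\diam(\alpha_i)\leq 2|f^{-1}(x)-f^{-1}(y)|$ it yields $\diam(f(\alpha_i))\leq 2\eta(2)\,d_X(x,y)$, so (\ref{quasiconstant}) holds with $k=2\eta(2)$ rather than $\eta(4)$. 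That slip is cosmetic; the direction is fine.

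The converse is where the theorem lives, and there your submission is a program rather than a proof. Two concrete gaps. First, you assert that bounded turning ``forces the diameters of the arcs at generation $n$ to tend to $0$.'' The bisection you describe (splitting an arc so the two children have comparable diameters) only guarantees each child has diameter between half and all of its parent's diameter, so there is no automatic decay; proving that the mesh tends to zero requires combining bounded turning with doubling (e.g.\ ruling out infinitely many essentially disjoint subarcs of uniformly positive diameter), and without it your map $\phi$ is not even well defined as a homeomorphism, since the bisection points need to be dense in $\gamma$. Second, the verification that $\phi$ is $H$-weakly quasisymmetric --- which is the entire analytic content of the Tukia--V\"ais\"al\"a theorem --- is only described (``locate the smallest dyadic arc\dots doubling controls the diameter ratios\dots bounded turning converts arc-diameters into chord distances''). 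These are precisely the estimates that must be proved, and you explicitly flag the delicate step without resolving it. The reduction to Lemma \ref{quasirelation} at the end is legitimate, but as written the argument for the ``if'' direction is an outline of the known proof, not a proof.
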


Here, a metric space $(X, d)$ is \emph{doubling}, if there exists $N \in \mathbb{N}$ such that every ball of radius $r >0$ in $X$ can be covered by at most $N$ balls of radius $r/2$ in $X$.

A quasicircle is a \emph{$k$-quasicircle} for some $k \geq 1$ if it satisfies (\ref{quasiconstant}).  If $\g$ is a $k$-quasicircle and is also doubling with doubling constant $N$, then there exists a quasisymmetry $f$ mapping $\partial \mathbb{D}$ to $\g$, where the distortion function of $f$ depends only on $k$ and $N$. On the other hand, if $f:\partial \mathbb{D} \to \g$ is $\eta$-quasisymmetric then $\g$ satisfies (\ref{quasiconstant}) with $k=2\eta(1)$.

A family $\{\g_i : i \in I\}$ of quasicircles in $X$ is said to consist of \emph{uniform quasicircles} if there exists $k \geq 1$ such that $\g_i$ is a $k$-quasicircle for each $i \in I$. 



\subsection{Lengths of curves}
A \emph{curve} in a metric space $X$ is a continuous function $\g:J\to X$ where $J$ is an interval in $\mathbb{R}$, i.e., there are real numbers $a<b$ such that $J$ has one of the following forms $[a,b],(a,b),[a,b)$ or $(a,b]$. We will often denote the image $\g(J)$ simply by $\g$. We say the curve $\g$ is \emph{rectifiable} if it has finite length: $l(\g)<\infty$. If every compact subcurve of $\g$ is rectifiable, we say that $\g$ is \emph{locally rectifiable}.

If $\Gamma$ is a family of curves in $X$ and $f: X \to Y$ is a homeomorphism, we denote by $f(\Gamma) = \{f\circ \gamma: \gamma \in \Gamma\}$.

Let $E,F$ be subsets of $\overline{X}$. We say that a curve $\g$ in $X$ connects $E$ and $F$ if there is a closed interval $[a,b]\subset\mathbb{R}$ and a continuous path $\g:[a,b]\to \overline{X}$ such that $\g(a)\in E$ and $\g(b)\in F$. We will denote by $\Gamma(E,F;X)$ the family of curves $\gamma$ in $X$ connecting $E$ and $F$. 


For a rectifiable curve $\g:J\to X$, the associated \emph{length function},
$s_{\g}:J\to[0,l(\g)]$ is defined by
$s_{\g}(t) = l(\g([0,t)))$. The \emph{arclength parametrization}  of $\g$ is the unique 1-Lipschitz function $\g_s:[0,l(\g)]\to X$ that satisfies the equation
$\g=\g_s\circ s_{\g}.$

Given a Borel function $\rho:X\to[0,\infty]$ we define the $\rho$-length of a rectifiable curve $\g$ as follows
\begin{align}\label{arclength}
l_{\rho}(\g) : = \int_{\g} \rho ds = \int_0^{l(\g)} \rho(\g_s(t))  dt.
\end{align}

For $f:X\to Y$ and $x\in X$ let
$L_f(x) :=  \limsup_{r\to0} \left( {L_f(x,r)}/{r}\right),
$
where $L_f(x,r)$ is the distortion of $f$ at $x$ at scale $r$ defined in Section \ref{Section:QS-definition}.
The following result, see  \cite[Theorem 5.3]{Vaisala},  will be crucial in the proof of quasi-invariance of transboundary modulus below.

\begin{theorem}\label{length-increasing}
	Suppose $D\subset\mathbb{R}^n$ and $f:D\to\mathbb{R}^n$ is a continuous map. If $\g$ is a locally rectifiable curve in $D$ and $f$ is absolutely continuous on every closed subcurve of $\g$, then $f(\g)$ is locally rectifiable, and for every Borel function $\rho:\mathbb{R}^n\to[0,\infty]$ we have
	\begin{align}
	\int_{f(\g)} \rho ds \leq \int_{\g}(\rho\circ f)\cdot L_f \, ds.
	\end{align}
\end{theorem}

\subsection{Classical Modulus}
Let $(X,d,\mu)$ be a metric space equipped with a Borel measure $\mu$, and $\Gamma$ be a family of curves in $X$. A Borel function $\rho:X\to[0,\infty)$ is called \emph{admissible for $\Gamma$}, denoted by $\rho \wedge \Gamma$, if
$\int_\gamma \rho \ ds \geq 1, \ \forall \gamma \in \Gamma,$
where, as in (\ref{arclength}), $ds$ is the arclength measure of $\gamma$.
%
%
For $p\geq1$, the \emph{$p$-modulus} of $\Gamma$ is defined as
\[
\mod_p (\Gamma) = \inf_{\rho\wedge\Gamma} \int_{X} \rho^p d\mu.
\]

When $(X,d,\mu)$ is locally Ahlfors $2$-regular, i.e., if (\ref{def-ineq:Ahlfors}) is satisfied near every $p$ but only for $r>0$ small enough, and in particular for domains in the plane,  we write  $\mod (\G)$ instead of $\mod_2 (\G)$.

%
The following lemma summarizes some of the most important properties of modulus which will be used in this paper. We say $\G_1$ \emph{minorizes} $\G_2$ and write $\Gamma_1 < \Gamma_2$, if every curve $\gamma \in \Gamma_2$ contains a subcurve $\delta\subset\g$ which belongs to $\Gamma_1$.
\begin{lemma}
Suppose $(X,d,\mu)$ is a metric measure spaces, $p\geq 1$ and $\Gamma_i, i=1,2,\ldots$, are curve families in $X$. Then
  \begin{enumerate}
    \item $\mathrm{\textsc{(Monotonicity)}}$ $\mod_p(\G)\leq \mod_p(\G')$,
        if $\G\subset\G',$
    \item $\mathrm{\textsc{(Subadditivity)}}$ $\mod_p(\G) \leq
        \sum_i\mod_p(\G_i)$, if $\G=\bigcup_{i=1}^{\infty}\G_i,$
    \item $\mathrm{\textsc{(Overflowing)}}$ $\mod_p(\Gamma_1) \geq \mod_p(\Gamma_2)$ if $\Gamma_1 < \Gamma_2$.
  \end{enumerate}
\end{lemma}

\section{Transboundary Modulus} \label{Section:Modulus}

In this section we define the transboundary modulus  introduced by Schramm \cite{Schramm}, and further developed and used by Bonk and Merenkov \cite{Bonk,Merenkov-rel-Schottky}. Our definition slightly differs from those in \cite{Schramm,Bonk,Merenkov-rel-Schottky}, and we explain the reasons for this after the definition. We also prove some properties of the transboundary modulus used below.

\subsection{Definition}
%
%

Let $\Omega$ be a domain in the plane $\mathbb{C}$ and let  $\calK= \{K_i\}_{i=1}^m$ be a finite collection of compact connected sets in $\Omega$.

On the domain $\Omega$ we consider the equivalence relation $\sim_{\calK}$, where $x \sim_{\calK} y$  if and only if $x=y$ or $x$ and $y$ belong to $K_i$ for some $i\in\{1,\ldots,m\}$. 

We denote the corresponding quotient space by
\begin{align*}
\Omega_{\mathcal{K}}=\Omega/\sim_{\calK}.
\end{align*}
The space $\Omega_{\calK}$ is equipped with the quotient topology.  Let  $q : \Omega \to \Omega_{\mathcal{K}}$ be the quotient map. 

Let $K=\cup_{i=1}^m K_i$. The elements of $\Omega_{\calK}$ are the points in $\Omega\setminus K$ and the points corresponding to the subsets $K_i$, denoted by $k_i$. Therefore, 
$$k_i=q(K_i)\in \Omega_{\mathcal{K}}.$$  Since $q$ is injective on $\Omega\setminus K$, we will think of $\Omega\setminus K$ as a subset of $\Omega_{\calK}$ and $q$ restricted to $\Omega\setminus K$ as the  identity map. 

We equip  $\Omega_{\mathcal{K}}$ with a measure $\mu_{\calK}$, which is equal to the 2-dimensional Hausdorff measure $\mathcal{H}^2$  on $\Omega\setminus K$ (or area, as per our convention) and to the counting measure on $q(K)=\Omega_{\calK}\setminus (\Omega\setminus K)$, 
\begin{align*}
  \mu_{\calK} = \calH^2\lfloor_{\Omega\setminus K} +\sum_{i=1}^m \delta_{k_i}.
\end{align*}
A \emph{transboundary mass distribution} on $\Omega_{\calK}$ is an $(m+1)$-tuple
\begin{align*}
\varrho=(\rho;\rho_1,\ldots,\rho_m),
\end{align*}
where 
$\rho:\Omega\setminus K\to[0,\infty)$ is a Borel function
and $\rho_i$ is a non-negative weight corresponding to $K_i$.  Thus $\varrho$ can also be thought of as a Borel function $\varrho:\Omega_{\calK}\to [0,\infty)$.

The \emph{mass} of the transboundary mass distribution $\varrho$ is defined by
\begin{align*}
 A(\varrho)=\int \varrho^2 d\mu_{\calK}=\int_{\Omega\setminus K} \rho^2 d\calH^2 + \sum_{i=1}^m \rho_i^2
\end{align*}

Let  $\g:J\to\Omega_{\calK}$ be a curve where $J\subset \mathbb{R}$ is an interval. Since $\Omega_{\calK}\setminus q(K) = \Omega\setminus K$ is open in $\Omega_{\calK}$, the set $\g^{-1}(\Omega\setminus K)\subset \mathbb{R}$ is a relatively open subset of $J$. Therefore, each connected component of $\g^{-1}(\Omega\setminus K)$ is an interval $J'\subset J$. We say that $\g$ is \emph{locally rectifiable in $\Omega_{\calK}$} if $\g|_{J'}:J'\to \Omega\setminus K$ is locally rectifiable for every component $J'\subset\g^{-1} (\Omega\setminus K)$.

Given a locally rectifiable curve $\g$ in $\Omega_{\mathcal{K}}$ and a transboundary mass distribution $\varrho$, the \textit{$\varrho$-length of $\g$ relative $\calK$} is
\begin{align*}
l_{\varrho}(\g):=\int_{{\g} \cap (\Omega\setminus K)} \rho \ ds + \sum_{i \,: \,k_i\in \g} \rho_i.
\end{align*}

If $\G$ is a family of curves in $\Omega_{\calK}$ we say that a mass distribution $\varrho$ is \emph{admissible for $\G$ relative $\calK$}, and write $\varrho\wedge_{\calK} \G$, if $l_{\varrho}(\g)\geq 1$ for all $\g\in\G$. 

Let $\G$ be a family of curves in $\Omega_{\calK}$. The \emph{{transboundary  modulus of $\G$}} is
\begin{align}\label{def:trmod}
  \Mod_{\Omega,\calK}(\G) =\inf_{\varrho\wedge_{\calK} \G} A(\varrho)= \inf_{\varrho \wedge_{\calK} \G} \int \varrho^2 d\mu_{\calK}.
\end{align}

If $\G$ is a curve family  in $\Omega$  then  we let
\begin{align}\label{trmod-quotients}
\Mod_{\Omega,\calK}(\G) := \Mod_{\Omega,\calK} (q(\G)).
\end{align}

Our definition of transboundary modulus is slightly more general than those in \cite{Bonk,Merenkov}, since we work in the quotient space $\Omega_{\calK}$ like in \cite{Schramm}.  One reason for this is that unlike \cite{Bonk,Merenkov} the mappings we consider cannot be extended to $\Omega$, even continuously (think of the conformal  map $\phi:\mathbb{C} \setminus [-i,i]\to\mathbb{C} \setminus\mathbb{D}$). Nevertheless, using the notation above, for curve families in $\Omega$ our definition coincides with the definitions of Bonk and Merenkov.
Also, we do not use the ends compactification notation used in \cite{Schramm}, since it is more convenient for our applications (see, e.g.,  Lemma \ref{lemma:transmod=0}) to use the notation similar to \cite{Bonk,Merenkov} where the domain $\Omega$ stays fixed, while the families of continua $\calK_{n}$ change with $n$. 

Note that with our convention $\G$ may denote either a family of curves in $\Omega$ or in $\Omega_{\mathcal{K}}$ since transboundary modulus is defined in both cases.

\subsection{Properties of the transboundary modulus}

Some of the properties of transboundary modulus can be proved exactly the same way as for the regular modulus of curve families. However the property of overflowing can be somewhat strengthened. Indeed, we say that $\G_1$ \emph{minorizes} $\G_2$ \emph{relative} $\calK$, and write $\G_1<_{\calK}\G_2$, if for every $\g\in \G_2$ there is a curve $\delta\in\G_1$ such that for the images of the curves $\delta$ and $\g$ under the quotient map $q:\Omega\to \Omega_{\mathcal{K}}$ we have $q(\delta)\subset q(\g)\subset \Omega_{\calK}$.

\begin{proposition}\label{trmodproperties}
Let $\Omega\subset\mathbb{C}$ be a domain, and $\calK=\{K_i\}_{i=1}^m$ be a finite collection of pairwise disjoint compact connected subsets of $\Omega$. Then the following properties are satisfied:
	\begin{enumerate}
	\item $\mathrm{\textsc{(Monotonicity)}}$ $\Mod_{\Omega,{\calK}}(\Gamma) \leq \Mod_{\Omega,{\calK}}(\Gamma')$, if $\Gamma \subset \Gamma'$,
	\item $\mathrm{\textsc{(Subadditivity)}}$ $\Mod_{\Omega,{\calK}}(\Gamma) \leq \sum_{j=1}^{\infty} \Mod_{\Omega,{\calK}}(\Gamma_j)$, if $\Gamma=\bigcup_{j=1}^\infty \Gamma_j$,
	\item $\mathrm{\textsc{(Overflowing)}}$ $\Mod_{\Omega,{\calK}}(\Gamma_1) \geq \Mod_{\Omega,{\calK}}(\Gamma_2)$, if $\Gamma_1 <_{\calK} \Gamma_2$.
	\end{enumerate}
\end{proposition}

\begin{proof}	
To 	prove the properties of overflowing (and therefore of monotonicity) note that if $\Gamma_1 <_{\calK} \Gamma_2$, then any mass distribution $(\rho, \{\rho_i\})$ admissible for $\Gamma_1$ is also admissible for $\Gamma_2$. So $\Mod_{\Omega,{\calK}}(\Gamma_1) \geq \Mod_{\Omega,{\calK}}(\Gamma_2)$.
	
To prove subadditivity assume without loss of generality that $\sum_j \Mod_{\Omega,{\calK}}(\Gamma_j) < \infty$. Fix $\epsilon > 0$.  Then for every $j\geq1$ there is a mass distribution  $(\rho_j, \{\rho_{i,j}\}_{i=1}^m)\wedge_{{\calK}_j}\G_j$ so that
 \[
	 A(\rho_j, \{\rho_{i,j}\}) < \Mod_{\Omega,{\calK}}(\Gamma_j) + \frac{\epsilon}{2^j}.
 \]
	 	
	 	 Let $\tilde\rho = (\sum_j \rho_j^2)^{\frac{1}{2}}$ and $\tilde{\rho}_i = (\sum_j {\rho_{i,j}}^2)^{\frac{1}{2}}$ for $1\leq i \leq m$. Then $(\tilde{\rho},\{\tilde{\rho}_i\})$ is admissible for $\Gamma$ since $\rho\geq\rho_j$, and $\tilde{\rho}_i\geq \rho_{i,j}$ for every $i\in\{1,\ldots,n\}$ and every $j\geq 1$. Therefore,
\begin{align*}
  \Mod_{\Omega,{\calK}}(\Gamma) \leq  A(\tilde{\rho},\{\tilde{\rho}_i\}) < \sum_{j=1}^{\infty} \Mod_{\Omega,{\calK}}(\Gamma_j) + \epsilon.
\end{align*}
Letting $\epsilon \to 0$ finishes the proof.
\end{proof}

%


One of the most important properties of transboundary modulus is that it is a conformal invariant, cf. \cite{Bonk} \cite{Schramm}. Next we show that transboundary modulus is distorted by at most a multiplicative constant under a quasiconformal map. This fact is crucial in the proof of Theorem \ref{thm:main}.

\begin{theorem}[Quasiconformal quasi-invariance of transboundary modulus]\label{quasicomparable}
Suppose $\Omega$ and $\Omega'$ are planar domains and $\calK = \{K_i\}_{i=1}^m$ and  $\calK' = \{K_i'\}_{i=1}^{m}$ are finite collections of compact connected subsets of $\Omega$ and $\Omega'$, respectively. Let $f:(\Omega)_{\calK}\to (\Omega')_{\calK'}$ be a homeomorphism s.t.  
\begin{itemize}
\item  $f(k_i) = k_i', i=1,\ldots m$. 
\item $f|_{\Omega} : \Omega \to \Omega' $
is an $H$-quasiconformal mapping, $H\geq 1$.
\end{itemize}
Then for every curve family $\G\subset \Omega_{\calK}$ we have
\begin{align}\label{ineq:tr-mod-QC}
  {H^{-1}}\Mod_{\Omega,\calK}(\Gamma) \leq \Mod_{\Omega',\calK'}(f(\Gamma)) \leq H \Mod_{\Omega,\calK}(\Gamma),
\end{align}
where $f(\G) = \{f\circ\g : \g\in \G\}$.
\end{theorem}

\begin{proof} Since the inverse of an $H$-quasiconformal map between planar domains is $H$-quasiconformal, it is enough to show only the left inequality in (\ref{ineq:tr-mod-QC}).
We first note that we may assume that 
%
%
for every $\g\in\G$ the mapping $f$ is absolutely continuous on every closed subcurve of $\g\cap \Omega\setminus K$, where $K=\cup_{i=1}^m K_i$. For this, let $\G$ be an arbitrary curve family in $\Omega_{\calK}$ and let
$$\Gamma_1 = \{\gamma \in \Gamma: f \ \textrm{is absolutely continuous on every closed subcurve of} \ \gamma \cap \Omega \backslash K\}.$$ For every $\gamma \in \Gamma \backslash \Gamma_1$ there exists a closed (connected) subcurve $\gamma_{f} \subset \Omega_{\mathcal{K}}$ so that $f$ is not absolutely continuous on it. Let $\Gamma_0 = \{\gamma_f \subset \Omega_{\mathcal{K}}: \gamma \in \Gamma\}$, then $\Gamma_0 <_{\calK} \Gamma \backslash \Gamma_1$. Since $f$ is quasiconformal we obtain that $\mod(\Gamma_0) =0$ cf.,  \cite[page 95]{Vaisala}.

By Proposition \ref{trmodproperties} we have
$\Mod_{\Omega,\calK}(\Gamma \backslash \Gamma_1) \leq \Mod_{\Omega,\calK}(\Gamma_0) = \mod(\Gamma_0)
$
and therefore $\Mod_{\Omega,\calK} (\G\setminus \G_1)=0$.  By subadditivity of transboundary modulus we have
$
 \Mod_{\Omega,\calK} (\G)=\Mod_{\Omega,\calK} (\G_1).
$
In particular, since $\Mod_{\Omega',\calK'} (f(\G_1))\leq \Mod_{\Omega',\calK'} (f(\G))$, in order to obtain the right inequality in (\ref{ineq:tr-mod-QC}) it is enough to show it for $\G=\G_1$. Thus, below we assume that $f$ is absolutely continuous on every closed subcurve of $\g\cap \Omega\setminus K$ for each $\g\in\G$.

Suppose $\varrho'=(\rho';\{\rho_i'\})$ is a mass distribution on $\Omega'_{\calK'}$ admissible for $f(\G)$. Define  $\varrho=(\rho;\{\rho_i\})$ on $\Omega_{\calK}$ as follows,
\begin{align*}
	\rho(x) &= \rho'\big(f(x)\big)\cdot L_f(x), \mbox{ for } x\in \Omega, \\
	\rho_i &= \rho'_i,  1\leq i \leq m.
\end{align*}

Since $f$ is absolutely continuous on every subcurve of $\g\cap \Omega\setminus K$ we have
$L_{\varrho}(\gamma) \geq L_{\varrho'}(f\circ\gamma)\geq 1$,
see Theorem \ref{length-increasing}. Thus $\varrho\wedge_{\calK} \G$ and we have		
\begin{align*}
\Mod_{\Omega,\calK}(\Gamma)
    &\leq \int_{\Omega} \rho^2 d\calH^2 + \sum_{i=1}^m \rho_i^2
    = \int_{\Omega} ( \rho'\circ f)^2  L_f^2 \,d\calH^2 + \sum_{i=1}^m (\rho_i')^n\\
		&\leq H\left(\int_{\Omega}( \rho'\circ f)^2 |J_f| \, d\calH^2 + \sum_{i=1}^m (\rho_i')^2\right)\\
        &\leq H\left(\int_{\Omega'} (\rho')^2 \, d\calH^2 + \sum_{i=1}^m (\rho_i')^2\right).
	\end{align*}
The second to last inequality above holds because a quasiconformal map is differentiable almost every point and for such a point $x\in\Omega$ we have $|Df(x)|^2 = L_f(x)^2 \leq H |J_f(x)|$.	
Taking infimum over all $\varrho$ admissible for $f(\G)$ we obtain the left inequality in (\ref{ineq:tr-mod-QC}).
\end{proof}

\section{Transboundary Lowener Property}\label{Section:TLP}
In this section we define the Transboundary Loewner Property for finitely connected domains (equipped with the inner path metric) and their inverse limits. We also show that QS planar finitely connected domain are TLP if their boundary curves are uniform quasicircles, see Theorem \ref{thm:finitely-connected-embedding}. We start by defining the class of spaces to which we apply the definitions below.

\subsection{Limits of planar domains}\label{Section:limits-of-domains}
The carpets considered in this paper are limits of topologically planar sets $X_n$ equipped with the inner-path metric.  
Recall, that if  $W\subset \mathbb{C}$ is a domain then the inner-path metric $\delta_{W}$ is defined by 
\begin{align}\label{def:path-metric}
\delta_W(x,y)=\inf_{x\overset{\g}{\sim}y}\ell(\gamma), 
\end{align}
where infimum is over all the rectifiable paths $\g$ in $W$ connecting $x$ and $y$, and $\ell(\g)$ denotes the length of $\g$.

Let $\Omega\subset 
\mathbb{C}$ be a simply connected domain, $\calK_1\subset\calK_2\ldots$  a sequence of finite collections of continua (that is, compact connected subsets) of $\Omega$, and $K_n:=\cup_{E\in\calK_n} E$. For  $n\geq1$ we denote by $\overline{W}_n$ the completion of the finitely connected domain $W_n=\Omega\setminus K_n$ in its inner path metric $\delta_{W_n}$. For every $m\leq n$ there is a natural map $\pi_{m,n}:\overline{W}_n \to \overline{W}_m$ induced by the inclusion $W_n\hookrightarrow W_m$.  

Then the carpets $X$ we study are obtained as the inverse limits of sequences $\overline{W}_n$ as above. Specifically, we let 
\begin{align*}
X = \{(x_i)_{i=1}^{\infty} : \pi_{i,i+1}(x_{i+1}) = x_i \}, 
\end{align*}
and equip $X$ with the limiting inner-path metric, i.e., for $x,y\in X$ we define
\begin{align*}
\delta_X(x,y) = \lim_{n\to \infty} \delta_{W_n}(x,y),
\end{align*}
provided the limit exist.

We say that metric carpet $X$ is a \textit{planar inverse limit carpet} if it can be constructed as above. See Section \ref{Section:dyadic-slit-carpets} for the details in the case of slit domains and slit carpets. 

%


\subsection{Transboundary Loewner Property} \label{Secion:TLP-intro}

Recall, that the \emph{relative distance} between compact connected subsets  $E$ and $F$ of a metric space $X=(X,d)$ is defined as follows:
\begin{align}\label{def:rel-dist}
\D(E,F)=\D_X(E,F)= \frac{\textrm{dist}_X(E,F)}{\min\{\textrm{diam}_X(E), \textrm{diam}_X(F)\}}.
\end{align}   

If $X=(W,\delta_W)$ where $W$ is a planar domain and $\delta_W$ is the path metric we will  denote the relative distance by 
$$\D_W(E,F)=\D_X(E,F).$$

We define the transboundary analogue of Loewner condition for finitely connected planar regions equipped with the inner path-metric metric.

In this section we suppose that $\Omega$ is a bounded simply connected domain in the complex plane $\mathbb{C}$,  $\calK$ is a finite  collection of disjoint non-degenerate continua in $\Omega$, and $K:=\cup_{k\in\calK} k$. We will denote by $\mod_{\Omega,\calK}(G)$ the transboundary modulus of $\G$ with respect to $\calK$ (see Section \ref{Section:Modulus} for the details).

\begin{definition}[\textbf{Transboundary Loewner Property}]\label{def:TLP-domains}
Given a function $\Psi:(0,\infty)\to(0,\infty)$, 
and $W=\Omega\setminus K$ as above, we say that $(W,\delta_W)$ (or $\overline{W}$) satisfies the \textsc{$\Psi$-transboundary Loewner property} (or is $\Psi$-\textsc{TLP}) if for every pair of disjoint compact connected sets $E,F\subset W$ we have
 \begin{align}\label{def:TLP}
\Mod_{\Omega,\calK}(\G(E,F)) \geq \Psi(\Delta_W(E,F)),
 \end{align}
 where $\G(E,F)$ is the family of curves in $\Omega$ connecting $E$ and $F$. 
 \end{definition}

We say a sequence $\{W_n\}$ of finitely connected domains is TLP if $W_n$ is $\Psi$-TLP for every $n\geq 1$  for a fixed function $\Psi$.

\begin{definition}\label{def:TLP-inverse-limits}
Suppose $X$  is an inverse limit of a sequence 
 $\{\overline{W}_n\}$, where $W_n$ is a finitely connected planar domain. We say that $X$ satisfies the \textsc{($\Psi$-) transboundary Loewner property} (or is \textsc{($\Psi$-)TLP}) if  $\{W_n\}$ is a \textsc{($\Psi$-) TLP} sequence. 
\end{definition}

Notions similar to the Transboundary Loewner property  have already appeared in the context of quasisymmetric uniformization problems of planar carpets, cf. \cite{Bonk,Merenkov-rel-Schottky}. We apply the TLP in the context of quasisymmetric embeddings of metric carpets which are limits of spaces $\overline{W}_n$ as above, as the number of components  approaches infinity. Unlike the previous works however, the complementary components of $W_n$ in our case may have no interior (e.g. they can be $1$ dimensional linear segments). This allows us to study metric carpets which may not be planar or may not admit bi-Lipschitz embeddings into any finite dimensional Euclidean space. 




\begin{theorem}\label{thm:finitely-connected-embedding}
Suppose $W=\Omega\setminus K$ is a finitely connected domain as above.  If there is a quasisymmetric embedding  $f:(W,\delta_W) \to \mathbb{C}$ such that the boundary components of $f(W)$ are uniform quasicircles then $(W,d_W)$ is TLP.  

In particular, if all the boundary components of $\overline{W}=\overline{(W,\delta_W)}$ are uniform quasicircles and there is a quasisymmetric embedding of $\overline{W}$ into the plane then $(W,\delta_W)$ is TLP. 
\end{theorem}

Theorem \ref{thm:finitely-connected-embedding} is proved in Subsection \ref{Section:QS&TLP}, after we establish the necessary estimates for the transboundary modulus in the plane. 

\begin{remark}
\rm{ Theorem \ref{thm:finitely-connected-embedding} is quantitative in the sense that 
 if $f$ is $\eta$-quasisymmetric and all the boundary components of $W$ are $k$-quasicircles and then  $(W,d_W)$ is $\Psi$-TLP, where $\Psi$ depends only on $\eta$ and $k$. In particular, $\Psi$ does not depend on the number of the boundary components of $W$ (i.e. the number of elements of $\calK$) or their relative distance. This allows us to obtain necessary conditions for quasisymmetrically embedding  certain metric carpets which are limits of finitely connected planar  domains as the number of boundary components approaches infinity. }
\end{remark}

\begin{remark} 
\rm{A crucial point of Theorem \ref{thm:finitely-connected-embedding} is that the metric we consider on the domain $W$ is the path metric $\delta_{W}$ and not the Euclidean metric induced on $W$ from the plane. Therefore, a boundary component of $\overline{(W,\delta_W)}$ can be a topological circle even if the corresponding element of $\calK$ is not a topological disk (e.g., a slit or any other continuum in the plane without interior). Moreover, as a metric space a boundary curve of $\overline{W}$ may be quite exotic (it may not be isometric to a subset of  any finite dimensional Euclidean space, or can have arbitrarily large Hausdorff dimension).}
\end{remark}


\subsection{Transboundary modulus in finitely connected domains}
In general, transboundary modulus cannot be bounded in terms of the classical modulus. The next result however shows that if $\Mod_{\Omega,\mathcal{K}}(\G)$ is small enough and $K_i$'s are uniform quasidisks then transboundary modulus may be bounded from below by the classical modulus. Here we say that a Jordan domain $K\subset \mathbb{C}$ is a \emph{$k$-quasidisk} if  $\partial K$ satisfies (\ref{quasiconstant}).

\begin{lemma}\label{lemma:mod-comparison}
Suppose $\Omega\subset \mathbb{C}$  is a domain and $\mathcal{K} = \{K_i\}_{i=1}^m$ is a collection of (closed) $k$-quasidisks in $\Omega$. If $\G$ is a family of curves in $\Omega$ then 
\begin{align}\label{ineq:TLP1}
\Mod_{\Omega,\mathcal{K}}(\G) \geq \min \{ c_1, c_2  \mod(\G)\}
\end{align}
where the constants $c_1$ and $c_2$ depend only on $k$.
\end{lemma}

For the proof of Lemma \ref{lemma:mod-comparison} we need the following auxiliary results. The first is the well-known Bojarski's Lemma,  see, e.g., \cite[Lemma 4.2]{Bojarski} or \cite[Lemma 5.1]{Merenkov-rel-Schottky}.

\begin{lemma}[Bojarski's Lemma]
Let $B_1,\ldots,B_m$ be any collection of open balls in the plane, $a_1,\ldots,a_m$ be non-negative numbers, and $\lambda\geq 1$. Then there is a constant $C_{\lambda}\geq 1$ which depends only on $\lambda$ such that
\begin{align*}
\int_{\mathbb{C}} \left( \sum_{i=1}^m a_i \chi_{\lambda B_i} \right)^2 dx dy \leq C_{\lambda} \int_{\mathbb{C}}  \left(\sum_{i=1}^m a_i \chi_{B_i} \right)^2 dxdy.
\end{align*}
\end{lemma}

The second lemma gives an upper bound for the number of ``sufficiently large" quasidisks intersecting a given set. Similar results appeared previously for disks, cf. \cite[Lemma 5.2]{Merenkov-rel-Schottky}, or for uniformly relatively separated quasi-round sets \cite[Lemma 8.2]{Bonk}.  We will need a version which works for quasidisks which are not necessarily uniformly separated. To state the next result we need the notion of fat sets due to Schramm \cite{Schramm}. 

Let $\tau>0$. A set $K\subset \mathbb{C}$ is said to be \emph{$\tau$-fat} if for every $x\in K$ and $r>0$ such that $B(x,r)$ does not contain $K$ we have
\begin{align}\label{fat-sets}
\calH^2(B(x,r)\cap K) \geq \tau \calH^2(B(x,r)). 
\end{align}

\begin{lemma}\label{lemma:cardinality}
Suppose $E$ is a planar continuum, and $\tau>0$. Let  $\{K_i\}_{i\in I}$ be a collection of disjoint $\tau$-fat sets in the plane, such that 
\begin{align}\label{disjoint-fat-sets}
K_i\cap E \neq \emptyset \quad \mathrm{and} \quad  \lambda\diam K_i \geq \diam E,
\end{align}
for some $\lambda\geq 1$. 
Then $\mathrm{card}(I)\leq N$, where $N=N(\tau,\lambda)$ depends only on $\tau$ and $\lambda$.
\end{lemma}

\begin{proof}
Without loss of generality assume $0\in E$. 
Denote $d=\diam E$. Since $K_i$ intersects $E$ we have that $K_i$ intersects the ball $B=B(0,d)$. 

Denote $\delta=\lambda^{-1}$ and let $I_1$ be the collection of $i\in I$ such that  $K_i$ intersects the circle $\{|z| = (1+\delta) d\}$.

For $i\in I_1$ pick $x_i\in K_i\cap \{|z|=d\}$. Since $K_i\cap \{|z| =(1+\delta)d\} \neq \emptyset$ we have that $B(x_i,\delta d)$ does not contain $K_i$ and therefore
\begin{align*}
\calH^2(B(x_i,\delta d)\cap K_i) \geq \tau \calH^2(B(x_i,\delta d)).
\end{align*}
Since $K_i$'s are disjoint we have
\begin{align*}
\mathrm{card} I_1  \leq \frac{\calH^2(B(0,(1+\delta)d)\setminus B(0,(1-\delta) d))}{\tau \calH ^2(B(x_i,\delta d))}=\frac{1}{\tau} \frac{(1+\delta)^2-(1-\delta)^2}{\delta^2}=\frac{4\lambda}{\tau}.
\end{align*}

To estimate $\mathrm{card}(I\setminus I_1)$, observe that since $ \diam(K_i) \geq  d/\lambda$, there is a point $y_i\in K_i$ such that $B(y_i,d/\lambda)$ does not contain $K_i$. Hence, $\calH^2(B(y_i,d/\lambda) \cap K_i) \geq \tau\calH^2(B(y_i,d/\lambda))$. Since $K_i\subset B(0,(1+\delta) d)$ for $i\in I \setminus I_1$ it follows that
\begin{align*}
\mathrm{card} (I\setminus I_1) \leq \frac{\calH^2(B(0,(1+\delta)d))}{\tau \calH^2(B(y_i,d/\lambda))} = \frac{(1+\lambda)^2}{\tau} .
\end{align*}
Thus $\mathrm{card}(I)\leq N(\tau,\lambda)=\tau^{-1}(4\lambda+(1+\lambda)^2) \leq (3+\lambda)^2/\tau$.
\end{proof}

By \cite[Corollary 2.3]{Schramm} every $k$-quasidisk  is $\tau$-fat  with $\tau$ depending only on $k$. Therefore, by Lemma \ref{lemma:cardinality} we obtain the following. 

\begin{corollary}\label{cardinality-quasidisks}
Suppose $E$ is a planar continuum. Let $k\geq1$ and   $\{K_i\}_{i\in I}$ be a collection of disjoint $k$-quasidisks in the plane, such that 
(\ref{disjoint-fat-sets}) holds. Then $\mathrm{card}(I)\leq N_1$, where $N_1=N_1(k,\lambda)$ depends only on $k$ and $\lambda$.
\end{corollary}

\begin{proof} [Proof of Lemma \ref{lemma:mod-comparison}]

Since every $K_i$ is a $k$-quasidisk, it follows (see e.g., \cite[Proposition 4.3]{Bonk}) that there is a $\lambda\geq 1$, which depends only on $k$, such that $K_i$'s are \emph{$\lambda$-quasi-round}, i.e., for every $i=1,\ldots,m,$  there is a ball $B_i=B(x_i,R_i) \subset \mathbb{C}$ such that $\lambda^{-1}B_i\subset K_i \subset B_i.$

Let $C = N_1(k,2\lambda)$, where $N_1$ is the constant from Corollary \ref{cardinality-quasidisks}. Assume that $\Mod_{\Omega,\mathcal{K}} (\G) \leq 1/(8 C^2)$. Let $\epsilon<1/(8C^2)$, and
choose a mass distribution $\varrho=(\rho;\rho_1,\ldots,\rho_n)$ that is admissible for $\G$ relative $\mathcal{K}$ and such that 
\begin{align*}
A(\varrho) \leq \Mod_{\Omega,\mathcal{K}} (\G) +\epsilon \leq \frac{1}{4 C^2}.
\end{align*}

 Define $g:\Omega\to[0,\infty)$ as follows
\begin{align*}
g= 2\left(\rho + \sum_{i=1}^m \frac{\rho_i}{R_i} \chi_{ 2B_i\cap \Omega}\right).
\end{align*}
To show that $g$ is admissible for $\G$, pick a curve $\g\in\G$ and let 

\begin{align*}
I_{\g} = \{i\in I : \g \cap K_i \neq \emptyset \mbox{ and } 2 \lambda\diam(K_i) \geq \diam (\g)\}.
\end{align*}

Then
\begin{align*}
\int_{\g} g ds 
&= 2 \left(\int_{\g\cap \Omega} \rho ds +  \sum_{i: \, \g\cap 2 B_i \neq \emptyset} \frac{\rho_i}{R_i} \int_{\g\cap 2 B_i}  ds \right)  \nonumber \\
&\geq2 \left( \int_{\g\cap \Omega} \rho ds + \sum_{i\in I\setminus I_{\g}: \, \g\cap K_i \neq \emptyset} \frac{\rho_i}{R_i} \int_{\g\cap 2 B_i}  ds \right).
\end{align*}
For $i\in I\setminus I_{\gamma}$ we have 
$\diam (\g) > 2\lambda \diam (K_i)\geq 2 \lambda \diam (\lambda^{-1} B_i) = 2\diam B_i.$
Hence, $\g$ is not contained in $2B_i$ and if, additionally, $\gamma \cap K_i \neq \emptyset$ we have that $\int_{\g\cap 2 B_i} ds \geq R_i$.
Thus, 
\begin{align}\label{est:length}
\int_{\g} g ds \geq 2 \left( \int_{\g\cap \Omega} \rho ds + \sum_{i\in I\setminus I_{\g}: \, \g\cap K_i \neq \emptyset} {\rho_i}  \right).
\end{align}
To estimate the right hand side from below observe that $\rho_i \leq \sqrt{A(\rho)}\leq 1/(2C)$. Moreover,  by Corollary \ref{cardinality-quasidisks} we have $\mathrm{card}(I_{\g})\leq C$. Thus $\sum_{i\in I_{\g}} \rho_i \leq \frac{1}{2}$
and
\begin{align*}
\int _{\g\cap\Omega} \rho ds + \sum_{i\in I\setminus I_{\g} \, : \, \g \cap K_i\neq \emptyset} \rho_{i} \geq l_{\varrho} (\g) -\frac{1}{2 }\geq \frac{1}{2},
\end{align*}
since $\varrho$ is admissible for $\G$ relative $\mathcal{K}$. Hence by (\ref{est:length}) $\int_{\g} gds \geq 1$ and $g$ is admissible for $\G$.

Thus we can estimate the modulus as follows, 
\begin{align} \label{est:trans-modulus-sum}
\mod (\G) 
&\leq \int_{\Omega} g^2 \, dx dy\leq
8 \left( \int_{\Omega}\rho^2 dxdy + \int_{\Omega} \left(\sum_{i=1}^m \frac{\rho_i}{R_i} \chi_{2B_i\cap \Omega}\right)^2 dxdy \right).
\end{align}
By Bojarski's Lemma and because the balls $\{\lambda^{-1} B_i\}_{i=1}^m$ are pairwise disjoint, we have
\begin{align*}
\int_{\Omega} \left(\sum_{i=1}^m \frac{\rho_i}{R_i} \chi_{2B_i\cap \Omega}\right)^2 dxdy
&\leq \int_{\mathbb{C}} \left(\sum_{i=1}^m \frac{\rho_i}{R_i} \chi_{2 B_i}\right)^2 dxdy \\
&\leq C_{2\lambda} \int_{\mathbb{C}} 
\left(
\sum_{i=1}^m   \frac{\rho_i}{R_i} \chi_{\lambda^{-1}B_i} \right)^2 
dx dy\\
& = C_{2\lambda} 
\sum_{i=1}^m  \left( \frac{\rho_i}{R_i} \right)^2  \calH^2(\lambda^{-1} B_i)
\\
&= \frac{\pi C_{2\lambda}}{\lambda^2} \sum_{i=1}^m   {\rho_i}^2 .
\end{align*}
Therefore, by (\ref{est:trans-modulus-sum}) we obtain
\begin{align*}
\mod (\G) \leq 
8 \max\left(1, \frac{\pi C_{2\la}}{\la^2}\right) \Mod_{\Omega,\mathcal{K}} (\G). \qquad
\qedhere
\end{align*}

\end{proof}

\subsection{QS embeddings and TLP}\label{Section:QS&TLP} Here we combine the results above to prove Theorem \ref{thm:finitely-connected-embedding}.

\begin{proof}[Proof of Theorem \ref{thm:finitely-connected-embedding}] 
Let $E,F\subset W$ be disjoint nontrivial continua and let $\G=\G(E,F;\Omega)$. Abusing the notation slightly we will also denote by $\G$ the family of curves $q(\G)$ in $\Omega_{\calK}$, where $q$ is the quotient maps $q:\Omega\to\Omega_{\calK}$.  Then there is a domain $\Omega'\subset \mathbb{C}$ and continua $\calK'$ in $\Omega'$ so that $f(W)=W'=\Omega'\setminus K'$. Let $\G'=\G(f(E),f(F);\Omega')$. Then, using the definition of transboundary modulus, the observation that $f(q(\G)) = q'(\G')$ and quasi-invariance of transboundary modulus we obtain 
\begin{align*}
\Mod_{\Omega,\mathcal{K}}(\G) &=\Mod_{\Omega,\mathcal{K}}(q(\G))\\
&\asymp \Mod_{\Omega',\calK'} (f(q(\G)))\\
&= \Mod_{\Omega',\calK'} (q'(\G'))\\
&=\Mod_{\Omega',\calK'} (\G')\\
&\geq \min \{ c_1, c_2  \mod(\G')\}
\end{align*}
where the last inequality follows from Lemma \ref{lemma:mod-comparison}. Since $\Omega'$ is a quasidisk it is $\phi$-Loewner  and hence
\begin{align*}
\mod(\G') \geq \phi(\Delta(f(E),f(F))) \geq \phi \left(\frac{1}{2\eta(\Delta_W(E,F)^{-1})}\right).
\end{align*}
Combining the estimates above we obtain 
$\mod_{\Omega,\calK} \G(E,F) \geq \Psi(\Delta_W(E,F)),$
where $\Psi(t) = \min\left\{c_1,c_2 \phi\left(\frac{1}{2\eta(1/t)}\right)\right\}$. This proves the first part of the theorem.

Note that if the boundary components of $\overline{W}$ are (non-planar) uniform quasicircles and $f:W\to \mathbb{C}$ is a quasisymmetry then the boundary components of $f(W)$ are uniform (planar) quasicircles. Hence, the second part of the theorem follows from the first part.
\end{proof}

\section{Dyadic Slit Carpets}\label{Section:slit-carpets}

\subsection{Metric carpets}
The classical Sierpi\'nski carpet $\mathbb{S}_{1/3}$ is the subset of the plane obtained as follows: Divide the unit square $[0,1]^2$ into nine congruent squares of side-length $1/3$ with disjoint interiors, and let $E_1$ be the closed set obtained by removing the interior of the middle square from $[0,1]^2$. Assume that for $i\geq 1$ the set $E_i$ has been constructed and is a union of finitely many closed squares with sidelength $1/3^i$ (and disjoint interiors). Dividing each such square in $E_i$ into $9$ subsquares and removing the interiors of middle squares produces the set $E_{i+1}\subset E_i.$ The classical Sierpi\'nski carpet $\mathbb{S}_{1/3}$ is  defined as the compact set $\cap_{i\in I} E_i.$


 The following  theorem of Whyburn \cite{Whyburn} characterizes the subsets of the sphere which are homeomorphic to $\mathbb{S}_{1/3}$.

\begin{theorem}[Whyburn]\label{metricsierpinski}
	Suppose $D_i \subset \mathbb{S}^2, \ i \geq 0$, is a sequence of topological disks satisfying the conditions:
	\begin{enumerate}
		\item $\overline{D_i} \cap \overline{D_j} = \emptyset, \ \textrm{for} \ i \neq j,$
		\item $\diam(D_i) \to 0, \ \textrm{as} \ i \to \infty,$
		\item $\overline{(\bigcup_i D_i)}=\mathbb{S}^2.$
	\end{enumerate}
	
	Then the compact set $\mathbb{S}^2 \backslash \bigcup_i D_i$ is homeomorphic to the standard Sierpi\'nski carpet $\mathbb{S}_{1/3}$.
\end{theorem}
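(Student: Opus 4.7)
The plan is to reduce the statement to Whyburn's topological characterization of $\mathbb{S}_{1/3}$: a compact subset $X\subset\mathbb{S}^2$ is homeomorphic to the standard Sierpi\'nski carpet if and only if $X$ is a locally connected continuum with empty interior whose complementary components are Jordan domains with pairwise disjoint closures and diameters tending to zero. Taking this characterization as given (it is the substantive input from Whyburn's paper), the task reduces to verifying each of the listed properties for $X=\mathbb{S}^2\setminus\bigcup_i D_i$.

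I would dispose of the easy properties first. The set $X$ is closed in $\mathbb{S}^2$ and therefore compact, and it has empty interior because hypothesis (3) forces $\bigcup_i D_i$ to be dense. For connectedness I would argue that given $p,q\in X$, any arc $\g\subset\mathbb{S}^2$ from $p$ to $q$ can be rerouted into $X$ by replacing each maximal subarc of $\g$ crossing some $D_i$ with a subarc of $\partial D_i$; the pairwise disjointness in (1) and the size control in (2) guarantee that only finitely many reroutings are macroscopic, and a limiting argument produces a continuous path in $X$. Local connectedness of $X$ at a point $p$ is proved in the same spirit: for $\varepsilon>0$ only finitely many $D_i$ meeting a small ball around $p$ have diameter $\geq\varepsilon$, so within a smaller ball there are only microscopic holes to navigate around.

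The last and most delicate step is to identify the components of $\mathbb{S}^2\setminus X$ with the $D_i$ themselves and to show that each is a Jordan domain. Density of $\bigcup_i D_i$ together with pairwise disjointness of their closures forces every complementary component to coincide with some $D_i$, and their diameters tend to zero by (2). For the Jordan boundary property I would apply Carath\'eodory's theorem to a Riemann map $\varphi:\mathbb{D}\to D_i$: local connectedness of $\partial D_i\subset X$ yields a continuous extension to $\overline{\mathbb{D}}$, and injectivity on $\partial\mathbb{D}$ must hold because any identification $\varphi(a)=\varphi(b)$ with $a\neq b$ would produce a point where $\partial D_i$ pinches onto itself, forcing either a collision of closures violating (1) or an interior point of $X$, contradicting empty interior.

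I expect this Carath\'eodory/no-pinch-points step to be the main obstacle. The assumption that each $D_i$ is a topological disk does not by itself constrain the topology of $\partial D_i$ (the slit disk is homeomorphic to an open disk but has a non-Jordan boundary), so the regularity of the individual boundaries must be extracted from the global geometric hypotheses on the family $\{D_i\}$, passed through local connectedness of $X$. Once the Jordan property is in hand, all conditions of Whyburn's characterization are met and the theorem follows.
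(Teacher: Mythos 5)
The paper gives no proof of this statement: it is quoted verbatim as a classical result of Whyburn \cite{Whyburn}, so the only thing to compare your argument against is Whyburn's characterization itself, which is essentially what you take as your black box. Your reduction --- verify compactness, empty interior, connectedness, local connectedness, and identify the complementary components of $X$ with the $D_i$ --- is the standard way to pass from that characterization to the form stated here. The first four verifications, while sketchy in places (the ``limiting argument'' for path-connectedness needs uniform control coming from condition (2); a cleaner route is to write $X$ as the nested intersection of the continua $\mathbb{S}^2\setminus\bigcup_{j\le i}D_j$, each a sphere with finitely many disjoint holes), are essentially sound.

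The genuine gap sits exactly where you predicted it. The claim that hypotheses (1)--(3) force each $\partial D_i$ to be a Jordan curve is false, and in particular your no-pinch dichotomy (an identification $\varphi(a)=\varphi(b)$ forces either a collision of closures or an interior point of $X$) does not hold. Take $D_0=B(0,1/2)\setminus\{(t,0):0\le t<1/2\}$, a slit disk, and fill $\mathbb{S}^2\setminus\overline{B(0,1/2)}$ with round disks in a standard carpet pattern. Conditions (1)--(3) all hold: no other $D_j$ can meet $\overline{D_0}=\overline{B(0,1/2)}$, and $\overline{\bigcup_i D_i}\supseteq\overline{D_0}\cup\overline{\mathbb{S}^2\setminus\overline{B(0,1/2)}}=\mathbb{S}^2$. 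Yet for an interior point $p$ of the slit, a small ball $B(p,\varepsilon)$ lies inside $\overline{D_0}$, hence meets no other $D_j$, and $B(p,\varepsilon)\cap X=B(p,\varepsilon)\setminus D_0$ is a straight arc. So the Riemann map of $D_0$ pinches along the slit without producing either a closure collision or an interior point of $X$, and the resulting $X$ is not homeomorphic to $\mathbb{S}_{1/3}$ (it has points with arc neighborhoods, hence local cut points, which the carpet does not). The resolution is that ``topological disk'' in this statement must be read as ``Jordan domain'' --- that is the hypothesis in Whyburn's theorem and the convention in Bonk's work, and it is all the paper ever uses, since the peripheral circles of the slit carpets are genuine topological circles. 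With that reading your Carath\'eodory step is vacuous and the rest of your outline goes through.
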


If $X$ is a metric carpet then a topological circle $\g\subset X$ is called a \emph{peripheral circle} if $X\setminus \g$ is connected, i.e., $\g$ is a non-separating curve in $X$. From Whyburn's theorem it follows that $\g\subset X$ is a non-separating curve  if and only if there is a homeomorphism mapping $X$ to $\mathbb{S}_{1/3}$ and $\g$ to the boundary of one of the complementary domains of $\mathbb{S}_{1/3}$ in the plane.

In this section we construct a class of metric carpets called \emph{dyadic slit carpets} which are the main object of study in this paper. Dyadic slit carpets include the slit carpet considered by Merenkov in \cite{Merenkov} and were also considered by the first author in \cite{Hakobyan}.

\subsection{Dyadic slit domains and the inner metric} 

Let $\mathbb{U}=(0,1)\times(0,1)$ in $\mathbb{R}^2$. We say that $\Delta\subset \overline{\mathbb{U}}$ is a \emph{dyadic square of generation $n$} if there exist $i,j \in \{0,1,2,\ldots,2^n-1\}$ such that
\begin{align*}
\Delta=\left[\frac{i}{2^n},\frac{i+1}{2^n}\right] \times \left[\frac{j}{2^n},\frac{j+1}{2^n}\right].
\end{align*}
We will denote by $\mathcal{D}_n$ be the collection of all dyadic squares of generation $n$ and by $\mathcal{D}=\cup_{n=0}^{\infty}\mathcal{D}_n$ the collection of all dyadic squares in $[0,1]^2$. The sidelength of a dyadic square $\Delta$ will be denote by $l(\Delta)$. Thus, $\Delta\in\mathcal{D}_n$ if and only if $l(\Delta)=1/2^n$.


Given a sequence $\mathbf{r}=\{r_n\}_{n=0}^\infty$ such that $r_n\in(0,1)$ for $n=0,1,2,\ldots,$ we next construct the corresponding sequence of ``slit" domains $S_n=S_n(\mathbf{r})$ in $\mathbb{U}$. For every dyadic square $\Delta$ of generation $n$ we denote by $s(\Delta)$ the closed vertical slit in $\Delta$ of length $r_n l(\Delta)$, whose center coincides with the center of $\Delta$. More precisely, if $(x,y)$ is the center of $\Delta\in \mathcal{D}_n$ then
\begin{align*}
  s(\Delta) = \{x\}\times \left[y-\frac{r_n}{2^{n+1}}, y+\frac{r_n}{2^{n+1}}\right].
\end{align*}
We say that a slit $s=s(\Delta)\subset{\D}$ is \emph{a slit of generation $n$} if $\Delta\in\mathcal{D}_n$, for some $n\geq 0$. 
For $n\geq0$ let
\begin{align*}
  \calK_n&=\calK_n(\mathbf{r})=\{s(\Delta): \D\in\mathcal{D}_0\cup\ldots\cup\mathcal{D}_n \} \quad \mbox{and}\\
  K_n &= \bigcup_{s\in\calK_n} s = \bigcup_{i=0}^n \bigcup_{\D\in\calD_i} s(\D)
\end{align*} 
be the collection of all slits of generation at most $n$  and their union, respectively. We will also use the following convention: $K_{-1}=\emptyset$.

Similarly, for  the collection of all slits and their union let
\begin{align*}
\calK &=\calK(\mathbf{r})=\{s(\Delta): \Delta\in\mathcal{D}\} \mbox{ and } \\
K&=\bigcup_{s \in \calK} s=\bigcup_{\D\in\calD}s(\D)
\end{align*}   

Finally, let $S_0=\mathbb{U}$ and for $n\geq1$, let
\begin{align}
  {S}_n = \mathbb{U}\backslash K_{n-1} = \mathbb{U}\setminus \bigcup_{i=0}^{n-1}\bigcup_{\D\in\mathcal{D}_i} s(\D),
\end{align}
where $\mathbb{U}$ is the open unit square as usual. We call $S_n$ the \emph{dyadic slit domain of generation $n\geq 0$}.

To define the metric carpet $\mathscr{S}_{\mathbf{r}}$, we first let $\mathscr{S}_n$ be the completion of the domain ${S}_n$ in its path metric $d_{S_n}$.
Recall that the path metric $d_{\Omega}$ on a domain $\Omega\subset \mathbb{R}^n$ is defined by
\[
d_{\Omega}(x,y):=\inf\{l(\gamma): \g\subset\Omega \mbox{ s.t. } \g(0)=x, \g(1)=y\},
\]
for all $x,y\in\Omega$, where $l(\g)$ denotes the length of a rectifiable curve $\g$ in $\Omega$, and the infimum is over all rectifiable curves in $\Omega$ connecting $x$ and $y$. The metric on $\mathscr{S}_n$ will be denoted by $d_{\mathscr{S}_n}$. Note that $\S_0$ is isometric to $[0,1]^2$ equipped with the Euclidean metric.

A boundary component of $\mathscr{S}_n$ corresponding to a slit of a dyadic square $\D\in \calD_m$ of generation  $m\leq n-1$ will be called a \emph{slit of $\mathscr{S}_n$ of generation $m$}. The slit of generation $0$ in $\S_n$ will be called the \textit{the central slit of $\S_n$}. The boundary component of $\mathscr{S}_n$ corresponding to $\partial([0,1]^2)$ will be called the \emph{outer square of $\S_n$}.

\subsection{Dyadic slit carpets}\label{Section:dyadic-slit-carpets}

For every $m, n \in \mathbb{N} \cup \{0\}$ with $m \leq n$ there exists a natural $1$-Lipschitz projection 
$$\pi_{m,n}: \mathscr{S}_n \to \mathscr{S}_m$$ obtained by identifying the points on the slits of $\mathscr{S}_n$ that correspond to the same point of $\mathscr{S}_m$. More precisely, if $p,q\in\S_n$ then $\pi_{m,n}(p) = \pi_{m,n}(q)$, {whenever} $\ d_{\mathscr{S}_m}(p,q) = 0.$ Note that all the boundary components of $\S_n$ are topological circles. Moreover, every slit of diameter $d>0$ in $\S_n$ is isometric to the square $\partial([0,d/2]\times[0,d/2])\subset\mathbb{R}^2$ equipped with the metric induced from the $\ell^1$ norm on $\mathbb{R}^2$.

As a topological space, the \emph{dyadic slit Sierpi\'nski carpet corresponding to $\mathbf{r}$} is defined as the inverse limit of the system $(\mathscr{S}_n,\pi_{m,n})$, and is denoted by $\mathscr{S}_{\mathbf{r}}$. More explicitly,
\begin{align}
  \mathscr{S}_{\mathbf{r}} = \left\{ (p_0,p_1,\ldots) : p_i \in \mathscr{S}_i \mbox{ and } p_i=\pi_{i,i+1}(p_{i+1}) \right\}.
\end{align}
If the sequence $\mathbf{r}$ is understood from the context, we will denote $\mathscr{S}_{\mathbf{r}}$ simply by $\mathscr{S}$. 

The inverse limits of the slits and outer squares of $\mathscr{S}_n$ are topological circles and will be called the slits and outer square of $\mathscr{S}$, respectively. Clearly, the slits are dense in $\mathscr{S}$, i.e., for every point $p$ in $\mathscr{S}$ and every neighborhood $U$ of $p$, there exists a slit of $\mathscr{S}$ that intersects $U$.

The diameter of each $\mathscr{S}_n$ is clearly bounded by $2$. If $x = (x_0,x_1, \ldots )$ and $
y = (y_0, y_1, \ldots)$ are points in $\mathscr{S}$, we define a distance between them by
\[
d_{\mathscr{S}}(x,y)=\lim_{n\to\infty} d_{\mathscr{S}_n}(x_n,y_n)
\]
Since every $\pi_{m,n}$ is $1$-Lipschitz, $(d_{\mathscr{S}_n}(p_n, q_n))$ is a non-decreasing bounded sequence, and thus $d_{\mathscr{S}}(p, q)$ exists and defines a metric on $\mathscr{S}$.

For each $n \geq 0$, there are natural projection maps
\begin{align*}
\pi_n:\mathscr{S}\to\mathscr{S}_n,\\ 
\pi_{0,n}:\mathscr{S}_n\to\mathscr{S}_0.
\end{align*}
To simplify notations, we will denote the projection $\pi_0$ of $\S$ onto the unit square by $\pi$. Thus, for every $n\geq 1$ we have 
\begin{align*}
\pi = \pi_{0,n} \circ \pi_n : \S\to[0,1]^2.
\end{align*} 
%

It was shown in \cite{Hakobyan} (see also \cite{Merenkov})  that the metric space $\mathscr{S}$ corresponding to a general collections of slits $\{s_i\}_{i=1}^{\infty}\subset (0,1)^2$ is homeomorphic to the Sierpi\'nski carpet $\mathbb{S}_{1/3}$, provided that the slits are uniformly relatively separated, dense in $[0,1]^2$ and $\diam(s_i)\to0$ as $i\to\infty$. In fact, the proof of \cite[Lemma 2.1]{Merenkov} easily generalizes to show that $\S_{\mathbf{r}}$ is homeomorphic to $\mathbb{S}_{1/3}$  even for an arbitrary sequence $\mathbf{r}=\{r_i\}_{i=0}^{\infty}$,  where $0<r_i<1$.

When talking about a {dyadic square of generation $n$ in $\mathscr{S}$}, we mean the subset of $\pi^{-1}(\Delta), \Delta \in \mathcal{D}_n$, which can be thought of as a slit carpet with respect to $\{r_i\}_{i=n}^\infty$ constructed in $\D$ instead of $\mathbb{U}$. More precisely, we say that $T\subset\S$  is a \emph{dyadic square of generation $n$ in $\mathscr{S}$}, if there is a dyadic square $\D\in\calD_n$ such that 
\begin{align}
T=T_{\Delta}=\overline{\pi^{-1}(\mbox{int}(\Delta))} .
\end{align}
We will also denote
\begin{align*}
\partial T_{\D}:= T_{\D} \setminus \pi^{-1} (\mbox{int}(\Delta)).
\end{align*} 
Thus $\partial{T_{\D}}$ is the ``outer square" of $T_{\D}$. For all $m,n\geq 0$ a \emph{dyadic square of generation $m$ in $\mathscr{S}_n$} is the  image of a dyadic square of generation $m$ in $\mathscr{S}$ under $\pi_n$. Note that for $m>n$ dyadic squares of generation $m$ in $\S_n$ do not contain slits in their interiors and therefore are isometric to Euclidean squares. 


Define a projection map $\mathrm{proj}(x,y)=x$ for $\forall \ (x,y) \in [0,1]^2$. A curve $\gamma: [a,b] \to  \mathscr{S}$ in a slit carpet  is called \emph{vertical} if $\mathrm{proj}(\pi(\gamma([a,b]))$ is a point, i.e., the first coordinate of $\pi(\gamma)$ is a constant. A curve which is not vertical is called \emph{nonvertical}.

The following properties are from \cite{Merenkov} and \cite{Hakobyan}. We state them without proof.

\begin{lemma}\label{slitmetric}
 There exists a constant $0 < c < 1$, which does not  depend of $n$, such that  $\forall p \in \mathscr{S}$ and $0 < r < \diam(\mathscr{S})$ there exists a point $q \in \mathscr{S}_n, n \geq 0$ such that
\begin{align}
  B(q, c r) \subset \pi_n(B(p, r)) \subset B(\pi_n(p), r).
\end{align}
\end{lemma}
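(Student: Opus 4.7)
My plan is to produce a dyadic sub-carpet $T_\Delta \subset \S$ that lies inside $B(p,r)$ and then lift a Euclidean sub-ball of $\Delta$ up to a metric ball in $\S_n$. The right inclusion $\pi_n(B(p,r)) \subset B(\pi_n(p),r)$ is immediate from $\pi_n$ being $1$-Lipschitz, so only the left inclusion needs work.

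For the left inclusion, the first step is to choose a scale. Using the bound $\diam(\S_n) \le D$ indicated earlier in the excerpt (for a constant $D$ independent of $n$), rescaling yields $\diam_{d_\S}(T_\Delta) \le D \cdot 2^{-k}$ for every dyadic sub-carpet of generation $k$. I pick the least integer $k \ge 0$ with $D \cdot 2^{-k} < r/2$, which forces $2^{-k} \asymp r$. Then I select $\Delta \in \calD_k$ with $\pi(p) \in \overline{\Delta}$; this forces $p \in T_\Delta$, and together with $\diam_{d_\S}(T_\Delta) \le r/2$ we get $T_\Delta \subset B(p,r)$ in $\S$.

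Next I would choose a Euclidean point $q$ in the interior of $\Delta$ at Euclidean distance at least $2^{-k}/3$ from $\partial \Delta$ and off every slit of $\S_n$---for instance $q = (x_0 + 2^{-k}/3,\, y_0 + 2^{-k}/2)$, where $(x_0, y_0)$ is the lower-left corner of $\Delta$. Since $2^{-k}/3$ is not a dyadic rational of any generation, the $x$-coordinate of $q$ avoids every vertical slit; in particular $q$ is a well-defined point of $\S_n$. Because $\pi_{0,n}: \S_n \to [0,1]^2$ is $1$-Lipschitz (every rectifiable path in $\S_n$ has Euclidean length at most its $d_{\S_n}$-length), any point in the $d_{\S_n}$-ball $B(q,\rho)$ projects into the Euclidean ball of radius $\rho$ around $q$. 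For $\rho = 2^{-k}/4$ this Euclidean ball lies inside $\Delta$, and so $B(q,\rho) \subset \pi_{0,n}^{-1}(\Delta) \subset T_\Delta$, the latter viewed as a subset of $\S_n$.

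Finally, the naturality relation $\pi = \pi_{0,n} \circ \pi_n$ together with surjectivity and continuity of $\pi_n$ yields $\pi_n(T_\Delta) = T_\Delta$ (first as a subset of $\S$, second as a subset of $\S_n$), so
\[
B(q,\, 2^{-k}/4) \;\subset\; T_\Delta \;=\; \pi_n(T_\Delta) \;\subset\; \pi_n(B(p,r)),
\]
which gives $B(q, cr) \subset \pi_n(B(p,r))$ with a constant $c > 0$ independent of $n$. The only nontrivial input is the rescalable diameter bound $\diam_{d_\S}(T_\Delta) \lesssim 2^{-k}$, which is the main obstacle; it can be verified by bounding the total slit-detour cost $\sum_{j \ge 0} r_{k+j} \cdot 2^{-k-j} \le 2 \cdot 2^{-k}$ across all generations $\ge k$ inside $\Delta$, a routine geometric-series estimate.
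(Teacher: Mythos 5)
The paper states Lemma \ref{slitmetric} without proof, deferring to \cite{Merenkov} and \cite{Hakobyan}, so there is no in-paper argument to compare against; your proposal must stand on its own, and it essentially does. The right inclusion via $1$-Lipschitzness of $\pi_n$, the choice of a generation-$k$ dyadic sub-carpet $T_\Delta$ with $2^{-k}\asymp r$ and $\diam_{d_\S}(T_\Delta)\le 2\cdot 2^{-k}$ (by exact rescaling of the bound $\diam(\S)\le 2$, which is cleaner than the ``detour cost'' heuristic you append), the choice of $q$ with non-dyadic $x$-coordinate so that it misses every slit, and the identity $\pi_n(\pi^{-1}(\mathrm{int}\,\Delta))=\pi_{0,n}^{-1}(\mathrm{int}\,\Delta)$ all work, and yield $c$ of order $1/32$ independent of $n$.

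One step is stated incorrectly, though it is harmless: ``$\pi(p)\in\overline{\Delta}$ forces $p\in T_\Delta$'' is false in general. If $\pi(p)$ lies on a slit of generation $m\le k-1$, that slit sits on a common vertical edge of two squares of $\calD_k$, and $p$ is one of the two lifts of $\pi(p)$; the lift approached from the right belongs only to $T_{\Delta'}$ for the right-hand square $\Delta'$, not to $T_\Delta$ for the left-hand one, even though $\pi(p)\in\Delta$. What is true, and what you should say, is that the sets $\{T_\Delta\}_{\Delta\in\calD_k}$ cover $\S$ (every point of $\tilde\Pi_k$ is a limit of points of $\pi^{-1}(\mathrm{int}\,\Delta)$ for at least one adjacent $\Delta$), so one may \emph{choose} $\Delta\in\calD_k$ with $p\in T_\Delta$; the rest of your argument then goes through verbatim. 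With that one-line repair the proof is complete.
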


\begin{lemma}\label{slitmeasure}
There exists a constant $C \geq 1$, independent of $n \geq 1$, such
that for any Borel set $E \subset \mathscr{S}$ we have
\[
\frac{1}{C}\mathcal{H}^2(\pi_n(E)) \leq \mathcal{H}^2(E) \leq C \mathcal{H}^2(\pi_n(E)).
\]
In addition, $\mathscr{S}$ and $\mathscr{S}_n$ are Ahlfors $2$-regular with the same Ahlfors regularity constant and $N$-doubling with the same doubling constant for every $n$.
\end{lemma}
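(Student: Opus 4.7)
The strategy is to transfer Ahlfors $2$-regularity and the doubling property from the Euclidean square $\mathscr{S}_0=[0,1]^2$ to $\mathscr{S}$ and $\mathscr{S}_n$ via Lemma \ref{slitmetric}, using that each projection $\pi_n$ is $1$-Lipschitz and identifies only an $\mathcal{H}^2$-null union of slit circles, so that $\mathcal{H}^2$ is preserved.

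\emph{Measure comparison.} The map $\pi_n\colon\mathscr{S}\to\mathscr{S}_n$ is $1$-Lipschitz because it is the limit of the $1$-Lipschitz maps $\pi_{n,k}$ for $k\geq n$, which immediately gives $\mathcal{H}^2(\pi_n(E))\leq\mathcal{H}^2(E)$. For the reverse inequality, observe that the locus where $\pi_n$ fails to be injective is the set
$$N_n=\bigcup_{m\geq n}\bigcup_{\Delta\in\mathcal{D}_m}\pi^{-1}(s(\Delta)),$$
consisting of the topological circles in $\mathscr{S}$ arising from slits of generation at least $n$, each isometric to the boundary of a planar rectangle. Thus $N_n$ is a countable union of rectifiable $1$-dimensional curves, so $\mathcal{H}^2(N_n)=0$, and similarly $\mathcal{H}^2(\pi_n(N_n))=0$. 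Since $\pi_n$ restricts to a bijection on $\mathscr{S}\setminus N_n$, and since $\mathscr{S}$ and $\mathscr{S}_n$ both agree locally with planar Lebesgue measure outside of their $1$-skeletons of slits, this bijection preserves $\mathcal{H}^2$, yielding the bound with $C=1$.

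\emph{Ahlfors regularity.} Fix $p\in\mathscr{S}$ and $0<r<\diam(\mathscr{S})$, and apply Lemma \ref{slitmetric} with $n=0$ to obtain $q\in[0,1]^2$ with
$$B(q,cr)\subset\pi(B(p,r))\subset B(\pi(p),r),$$
where the balls are Euclidean balls intersected with $[0,1]^2$. Since $[0,1]^2$ is Ahlfors $2$-regular, both enclosing balls have $\mathcal{H}^2$-measure $\asymp r^2$, so $\mathcal{H}^2(\pi(B(p,r)))\asymp r^2$. Combined with the measure comparison this gives $\mathcal{H}^2(B(p,r))\asymp r^2$ with constants depending only on $c$ and the Euclidean regularity constant. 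The argument for $\mathscr{S}_n$ is identical after replacing $\pi$ by $\pi_{0,n}\colon\mathscr{S}_n\to[0,1]^2$: the analog of Lemma \ref{slitmetric} for $\pi_{0,n}$ follows by composition with $\pi_n$ together with the $1$-Lipschitz inclusion $\pi_n(B_{\mathscr{S}}(p,r))\subset B_{\mathscr{S}_n}(\pi_n(p),r)$, and the analog of the measure comparison holds by the same skeletal argument with slits of generations $0,\ldots,n-1$ in place of $N_n$. All constants are uniform in $n$.

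Uniform $N$-doubling of $\mathscr{S}$ and all $\mathscr{S}_n$ is now the standard consequence of uniform Ahlfors $2$-regularity: any ball of radius $r$ has measure $\asymp r^2$, so a maximal packing by disjoint balls of radius $r/4$ contains boundedly many balls, whose inflations to radius $r/2$ cover $B(x,r)$. The main obstacle in executing this plan is the measure-comparison step: one must verify that $\pi_n$ is genuinely measure-preserving off $N_n$, which requires either a careful local analysis showing that $\mathscr{S}$ is locally isometric to Euclidean regions at small scales away from the $1$-skeleton of slits, or a direct appeal to the construction-specific arguments in Merenkov's work cited above.
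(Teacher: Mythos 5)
There is a genuine gap, and you have in fact flagged it yourself: the claim that $\pi_n$ restricted to $\mathscr{S}\setminus N_n$ preserves $\mathcal{H}^2$ (so that $C=1$) is never established, and the justification you offer for it is false. You write that ``$\mathscr{S}$ and $\mathscr{S}_n$ both agree locally with planar Lebesgue measure outside of their $1$-skeletons of slits.'' This is true for $\mathscr{S}_n$, which has only finitely many slits, but it fails for $\mathscr{S}$: the slits are dense in $\mathscr{S}$, so there is \emph{no} open subset of $\mathscr{S}\setminus N_0$ on which $d_{\mathscr{S}}$ is locally Euclidean --- every path must detour around infinitely many slits, and $d_{\mathscr{S}}(p,q)$ can exceed $|\pi(p)-\pi(q)|$ by a definite factor at every scale. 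Indeed, Lemma \ref{slitmetric} only guarantees $B(q,cr)\subset\pi_n(B(p,r))$ with $c<1$ and a recentered $q$, which is strictly weaker than local bi-Lipschitz equivalence; in general the comparison constant $C$ is genuinely larger than $1$, so the statement you are trying to prove by ``measure preservation'' is not even true in the sharp form you assert. Since the upper Ahlfors bound $\mathcal{H}^2(B(p,r))\lesssim r^2$ for $\mathscr{S}$ is derived in your argument precisely from this unproven reverse measure inequality (the lower bound, via $1$-Lipschitzness of $\pi$, is fine), the second half of the lemma is also left unproved.

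The standard way to close the gap (this lemma is stated in the paper without proof, with references to \cite{Merenkov} and \cite{Hakobyan}) is to work with the dyadic squares $T_{\Delta}=\overline{\pi^{-1}(\mathrm{int}(\Delta))}$: one first shows $\diam_{\mathscr{S}}(T_{\Delta})\leq C\, l(\Delta)$ with $C$ universal (any point of $T_\Delta$ connects to $\partial T_\Delta$ cheaply, and $\partial T_\Delta$ is bi-Lipschitz to $\partial\Delta$ as in Lemma \ref{lemma:-bilip-squares}). Covering $T_{\Delta}$ by its $4^k$ descendants of generation $m+k$ and letting $k\to\infty$ gives $\mathcal{H}^2(T_{\Delta})\leq C^2\,4^{-m}$, while $1$-Lipschitzness of $\pi$ gives $\mathcal{H}^2(T_{\Delta})\geq 4^{-m}$. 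The two-sided measure comparison for general Borel $E$ then follows by covering $\pi_n(E)$ efficiently by dyadic squares and pulling back, and the upper Ahlfors bound follows because $B(p,r)$ is contained in boundedly many $T_{\Delta}$ with $l(\Delta)\asymp r$. This yields a constant $C>1$ depending only on the universal diameter constant, uniformly in $n$, after which your deduction of doubling from two-sided Ahlfors regularity is correct.
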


\begin{lemma}
The metric space $\mathscr{S}$ equipped with $\mathcal{H}^2$ is a metric Sierpi\'nski carpet which is doubling and Ahlfors $2$-regular.
\end{lemma}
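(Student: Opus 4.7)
The plan is to assemble this lemma directly from results already established; no new estimate is needed. The statement has three ingredients — metric carpet, doubling, Ahlfors $2$-regular — and each can be read off from prior material, so the argument is almost entirely a matter of bookkeeping.

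First I would handle the topological claim. I would invoke the remark made right after the construction of $\S_{\mathbf{r}}$: following \cite{Hakobyan} and \cite{Merenkov}, for every sequence $\mathbf{r}$ with $r_i\in(0,1)$ the slit carpet $\S_{\mathbf{r}}$ is homeomorphic to the classical Sierpi\'nski carpet $\mathbb{S}_{1/3}$, which is exactly the definition of a metric carpet given at the start of Section~\ref{Section:slit-carpets}. If one preferred a self-contained argument, the natural route is to apply Whyburn's Theorem~\ref{metricsierpinski} to an embedding of $\S$ into $\mathbb{S}^2$: the complementary topological disks, namely the images $\pi^{-1}(\mathrm{int}(\Delta))$ for $\Delta\in\calD$ together with the unbounded component of the outer square, are pairwise disjoint, have diameters tending to zero (since $\pi$ is $1$-Lipschitz and the Euclidean dyadic squares shrink), and have a union whose closure is all of $\mathbb{S}^2$ because the slits are dense in $\S$ by construction.

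For the metric claims, no separate argument is needed: Lemma~\ref{slitmeasure} already asserts that $\S$ is Ahlfors $2$-regular and $N$-doubling with uniform constants, so I would simply cite it. Alternatively, the doubling property follows automatically from $2$-regularity by the standard volume-comparison: if $\calH^2(B(p,r))\asymp r^2$ uniformly, then a maximal $(r/4)$-separated subset of $B(p,r)$ has cardinality bounded by a dimensional constant, and the corresponding $(r/2)$-balls cover $B(p,r)$ by maximality. The only genuine technical work in the whole development has already been done in Lemma~\ref{slitmetric} (which supplies a point $q\in\S_n$ with $B(q,cr)\subset\pi_n(B(p,r))$) and Lemma~\ref{slitmeasure} (which transports $\calH^2$-mass between $\S$ and $\S_n$ up to a bounded factor), so that the Euclidean Ahlfors $2$-regularity of the approximants $\S_n$ — which are planar domains minus straight slits of zero area — passes to $\S$ in the limit. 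The present lemma is therefore a packaging statement, and there is no real obstacle beyond correctly citing what has been proved.
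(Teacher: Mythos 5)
Your primary route is exactly what the paper does: the lemma is stated without proof as a packaging of the homeomorphism remark (citing \cite{Hakobyan} and \cite{Merenkov}) together with Lemma~\ref{slitmeasure}, so your citations constitute the intended argument and the volume-comparison derivation of doubling from $2$-regularity is also fine. One caveat on your optional Whyburn aside only: the complementary disks of $\S$ in a sphere are the disks bounded by the peripheral circles (the slits and the outer square), not the sets $\pi^{-1}(\mathrm{int}(\Delta))$, which are dyadic pieces of the carpet itself — but since you do not rely on that alternative, it does not affect the correctness of your proof.
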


\section{QS planarity implies weak TLP}\label{Section:necessary-condition} In this section we provide a necessary condition for the existence of a quasisymmetric embedding of the slit carpet $\S_{\mathbf{r}}$ into the plane. 
%
%
%
%
This condition is an estimate on the transboundary modulus relative to the collection of slits $\calK_n$ . Below we use the notations introduced in Section \ref{Section:slit-carpets}. In particular,  $\mathbb{U}=(0,1)^2$. Moreover, 
we let
\begin{align}\label{notation}
\begin{split}
L &=\{(0,y) : 0\leq y \leq 1 \}\subset\partial\mathbb{U},\\
R &=\{(1,y) : 0\leq y \leq 1 \}\subset\partial\mathbb{U},\\
\G_{} &=\G(L,R;\mathbb{U}).
\end{split}
\end{align}
Thus, $\G$ is the family of curves in $\overline{\mathbb{U}}$ connecting the vertical sides of $\mathbb{U}$.


\begin{lemma}\label{lemma:necessary-condition-for-QS-embedding}
Suppose there is an $\eta$-quasisymmetric embedding  $f: \mathscr{S}_{\mathbf{r}}\hookrightarrow\mathbb{R}^2$ of the slit carpet $\S=\S_{\mathbf{r}}$ into the plane. Then there is a constant $c>0$ which depends only on $\eta$ such that for every $n>0$ we have
\begin{align}\label{ineq:trmod>0}
\Mod_{\mathbb{U},\calK_n} (\G) \geq c.
\end{align}
%
\end{lemma}

\begin{remark}
\rm{ Note that by Theorem \ref{thm:slit-carpets-TLP}  the carpet $\S_{\mathbf{r}}$ satisfies the Transboundary Loewner Property if there is a quasisymmetric embedding of $\S_{\mathbf{r}}$ into the plane. This easily implies Lemma \ref{lemma:necessary-condition-for-QS-embedding}, since $\D(L,R) = 1$ and therefore by Theorem \ref{thm:slit-carpets-TLP} 
$$\Mod_{\mathbb{U},\calK_n}\G(L,R) \geq \Psi(1)>0.$$

Thus, condition (\ref{ineq:trmod>0}) may be thought of as a (very) weak form of TLP. However, we show that it is in fact equivalent to TLP.
To see this we will combine Lemma \ref{lemma:necessary-condition-for-QS-embedding} with the bounds in Section \ref{Section:trmod-in-slit-domains} and show that if there is a QS embedding $f:\S_{\mathbf{r}}\rightarrow\mathbb{R}^2$ then $\mathbf{r}\in\ell^2$. Then, with the help of Bonk-Kleiner's theorem, we will show that if $\mathbf{r}\in\ell^2$ then the finite approximations $\S_n$ of $\S$ can be embedded into the plane, via uniformly quasisymmetric maps. 
 Finally, from Theorem \ref{thm:finitely-connected-embedding} it would follow that $\S_i$'s are uniformly TLP. In short, we have the following implications:
$$\S_{\mathbf{r}}\hookrightarrow \mathbb{R}^2 \Rightarrow (\ref{ineq:trmod>0}) \Rightarrow \mathbf{r}\in\ell^2 \Rightarrow \widehat{\S}_{\mathbf{r}} \cong \mathbb{S}^2 \Rightarrow \S_{\mathbf{r}}\mbox{ is TLP }.$$
}
\end{remark}

To prove Lemma \ref{lemma:necessary-condition-for-QS-embedding} we will first show that a quasisymmetric embedding $f:\S\hookrightarrow\mathbb{R}^2$ descends to uniformly quasiconformal mappings $f_n:\S_n\hookrightarrow\mathbb{R}^2$, which are quasisymmetric on the ``outer square", see Lemma \ref{lemma:qc-extension}.


For $n\geq 1$, we will denote by $\Pi_{n}$ and $\tilde{\Pi}_{n}$ the preimages of  the dyadic grid of generation $n$ in $\overline{\mathbb{U}}=[0,1]^2$ under the projections $\pi_{0,n}$ and $\pi$ in $\S_{n}$ and $\S$, respectively. In other words we have 
	\begin{align*}
	\Pi_{n}&=\pi_{0,n}^{-1} \left(\bigcup_{\Delta\in\mathcal{D}_{n}} \partial \Delta \right) \subset \S_{n}, \qquad
	\tilde{\Pi}_{n}=\pi^{-1}\left(\bigcup_{\Delta\in\mathcal{D}_{n}} \partial \Delta \right) \subset \S.
	\end{align*}
From the definitions it follows that $\pi_n|_{\tilde{\Pi}_{n}}$ is a homeomorphism. In fact more is true.

	\begin{lemma}\label{lemma:-bilip-squares}
	For every $n\geq 0$, the mapping $\pi_n|_{\tilde{\Pi}_{n}}$, i.e., the restriction of the projection maps $\pi_n:\S\to\S_n$ to ${\tilde{\Pi}_{n}}$ is bi-Lipschitz. More precisely, if  $p,q\in{\tilde{\Pi}_{n}}$ then
	\begin{align}\label{ineq:boundary-lipschitz}
	d_{\S_n}(\pi_n(p),\pi_n(q)) \leq d_{\S} (p,q) \leq 3 d_{\S_n}(\pi_n(p),\pi_n(q)).
	\end{align}
	\end{lemma}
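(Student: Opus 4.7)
The left inequality in (\ref{ineq:boundary-lipschitz}) is immediate: since each $\pi_{n,k}:\S_k\to\S_n$ is $1$-Lipschitz, the sequence $d_{\S_k}(\pi_k(p),\pi_k(q))$ is non-decreasing in $k$, and hence $d_{\S_n}(\pi_n(p),\pi_n(q))\leq \lim_{k\to\infty} d_{\S_k}(\pi_k(p),\pi_k(q))=d_{\S}(p,q)$. For the right inequality, the plan is to take a nearly length-minimizing curve $\g$ in $\S_n$ from $\pi_n(p)$ to $\pi_n(q)$, modify it into a curve $\tilde\g$ lying entirely on the grid $\Pi_n$ with a controlled length blow-up, and then isometrically lift $\tilde\g$ via the bijection $\pi_n|_{\tilde\Pi_n}:\tilde\Pi_n\to\Pi_n$ to a curve $\hat\g$ from $p$ to $q$ in $\S$.

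The key geometric input is that for each $\D\in\calD_n$ the open interior of $\D$ is disjoint from every slit of generation $\leq n-1$, since all such slits are supported on vertical grid lines of generations at most $n$. It follows that $C_{\D}:=\overline{\pi_{0,n}^{-1}(\mathrm{int}(\D))}\subset\S_n$ is isometric, in the $\S_n$-metric, to the closed Euclidean square $\D$ of side-length $2^{-n}$. In particular $\partial C_{\D}\subset \Pi_n$ is a topological circle of total length $4\cdot 2^{-n}$, and for any two points $a,b\in\partial C_{\D}$ the shorter of the two boundary subarcs joining $a$ to $b$ has length at most $2|a-b|$, as can be checked directly on a Euclidean square. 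Fix $\eps>0$ and choose $\g$ with $L(\g)\leq d_{\S_n}(\pi_n(p),\pi_n(q))+\eps$. Writing $\bigcup_{\D\in\calD_n}\g^{-1}(\mathrm{int}(C_{\D}))$ as a disjoint countable union of open intervals $(s_i,t_i)$, each contained in the preimage of a single $\mathrm{int}(C_{\D_i})$, replace each piece $\g|_{[s_i,t_i]}$ by the shorter subarc of $\partial C_{\D_i}$ joining $\g(s_i)$ to $\g(t_i)$. Since $|\g(s_i)-\g(t_i)|$ is at most the length of $\g|_{[s_i,t_i]}$ and the replacement has length at most $2|\g(s_i)-\g(t_i)|$, the resulting curve $\tilde\g\subset \Pi_n$ from $\pi_n(p)$ to $\pi_n(q)$ satisfies $L(\tilde\g)\leq 2L(\g)\leq 3L(\g)$.

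To conclude, note that every slit of generation $\geq n$ lies in the open interior of some $C_{\D}$ (and of the corresponding $T_{\D}$ in $\S$), so these slits avoid both $\Pi_n$ and $\tilde\Pi_n$; consequently $\pi_n|_{\tilde\Pi_n}:\tilde\Pi_n\to\Pi_n$ is a bijection. Along every grid segment and along each side of an opened slit of generation $<n$, the metrics of $\S$ and $\S_n$ both restrict to the same Euclidean length metric, so $\pi_n|_{\tilde\Pi_n}$ is a local isometry and in particular preserves lengths of curves. Lifting $\tilde\g$ through the inverse of this bijection therefore produces a curve $\hat\g$ in $\S$ from $p$ to $q$ with $L(\hat\g)=L(\tilde\g)\leq 3L(\g)$, yielding $d_{\S}(p,q)\leq L(\hat\g)\leq 3\,d_{\S_n}(\pi_n(p),\pi_n(q))+3\eps$; letting $\eps\to 0$ completes the argument. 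The main subtlety is the local-isometry claim for $\pi_n|_{\tilde\Pi_n}$, which ultimately reduces to the observation that curves on $\tilde\Pi_n$ cannot enter any slit of generation $\geq n$ (where the two metrics differ) and hence locally sit inside the same Euclidean model for both $\S$ and $\S_n$.
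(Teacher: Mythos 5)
Your proof is correct and follows essentially the same route as the paper's: both arguments exploit that the generation-$n$ dyadic pieces of $\S_n$ are isometric to Euclidean squares, reroute a (near-)minimizing curve of $\S_n$ along the boundaries of these squares with a uniformly bounded length increase, and lift the result isometrically to $\tilde{\Pi}_{n}\subset\S$. The only cosmetic difference is that you perform the rerouting as a single curve surgery using the shorter-arc-versus-chord bound (constant $2$), whereas the paper breaks the geodesic at exit points from the dyadic squares and bounds each piece by a boundary path through at most two corners, which is where its constant $3$ comes from.
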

\begin{proof}
		The left inequality in (\ref{ineq:boundary-lipschitz}) follows from the fact that the sequence
		$d_{\S_n}(\pi_n(p),\pi_n(q))$ is non-decreasing in $n$. 
		
		To obtain the right inequality in (\ref{ineq:boundary-lipschitz}) we recall the following notation from Section \ref{Section:dyadic-slit-carpets}. Suppose $n\geq 0$ and $ \Delta\in\mathcal{D}_{n}$ is a dyadic square. Let $T=T_\Delta$ be the corresponding ``dyadic square" in $\S$, i.e., $T_{\Delta} = \overline{\pi^{-1}(\mathrm{int}(\D))}$
where the closure is in $d_{\S}$ metric.

First, assume that $p,q\in\partial T_{\D}$ for some $\D\in\calD_{n}$.  If $\pi(p)$ and $\pi(q)$ belong to the same edge of the square $\partial \Delta$ then  
$$d_{\S} (p,q) = d_{\S_0}(\pi(p),\pi(q))=|\pi(p)-\pi(q)|.$$ 
On the other hand, if $\pi(p)$ and $\pi(q)$ belong to different edges of the Euclidean square $\partial \Delta$ then there are at most two corner points $z_1,z_2$ of $\partial \Delta$ between $\pi(p)$ and $\pi(q)$ on $\partial{\Delta}$ such that
		\begin{align*}
		|\pi(p)-\pi(z_1)| + |\pi(z_1)-\pi(z_2)|+ |\pi(z_2)-\pi(q)| \leq 3|\pi(p)-\pi(q)|.
		\end{align*}
Therefore,
\begin{align*}
d_{\S} (p,q)
&\leq d_{\S} (p,z_1) + d_{\S} (z_1,z_2)+ d_{\S} (z_2,q) \\
& = |\pi(p)-\pi(z_1)| + |\pi(z_1)-\pi(z_2)|+ |\pi(z_2)-\pi(q)|\\
&\leq 3 d_{\S_0}(\pi(p),\pi(q))\\
& \leq 3 d_{\S_n}(\pi_n(p),\pi_n(q)).
\end{align*}

More generally, suppose $p,q\in\tilde{\Pi}_{n}$. Consider a curve $\g$ connecting  $\g(0)=\pi_n(p)$ and $\g(1)=\pi_n(q)$ in $\S_n$ of minimal length. It is easy to see that such a curve exists, and  it is, in fact, a preimage of a piecewise linear curve in $\overline{\mathbb{U}}$ under $\pi_{0,n}$. Observe that there are points $\zeta_j, j=0,\ldots, k+1,$ on $\g$ such that: $(i)$ $\zeta_0=\pi_n(p)$, $\zeta_{k+1}=\pi_{n}(q)$, $(ii)$ for every $j$ the two consecutive points $\zeta_j$ and $\zeta_{j+1}$  belong to the outer boundary of the same dyadic square $\pi_{n}(T_{\D})\subset\S_{n}$ for some $\D\in\calD_{n}$, and $(iii)$ the following equality holds $d_{\S_n}(\pi_n(p),\pi_n(q)) = \sum_{j=0}^k d_{\S_n}(\zeta_j,\zeta_{j+1}).$ Indeed, this can be achieved by letting $\zeta_1$ be the ``last point of exit" of $\g$ from the (closed) square $T_{\D}$ containing $\zeta_0=\g(0)$, and continuing by induction. 

 Finally, letting $p_j=\pi_n^{-1}(\zeta_j),j=0,\ldots,k+1,$ and using the estimate above, we obtain
\begin{align*}
d_{\S}(p,q)
\leq \sum_{j=0}^k d_{\S}(p_j,p_{j+1})
\leq \sum_{j=0}^k 3d_{\S_n}(\zeta_j,\zeta_{j+1})
= 3 d_{\S_n}(\pi_n(p),\pi_n(q)),
\end{align*}
which completes the proof.
\end{proof}

	%



\begin{lemma}\label{lemma:qc-extension}
Suppose there is an $\eta$-quasisymmetric mapping $f:\mathscr{S} \hookrightarrow \mathbb{R}^2$. Then there  are embeddings  $f_n:\mathscr{S}_n \hookrightarrow \mathbb{R}^2$ such that the following conditions hold:
\begin{itemize}
\item[(a)] $f_n|_{\Pi_{n}}$ is an $\eta_1$-quasisymmetric mapping for every $n$, where $\eta_1(t)=\eta(3t)$.  
\item[(b)] For all $n\geq 1$, $f_n$ is $H$-quasiconformal, where $H$ depends only on $\eta$.
\end{itemize}
\end{lemma}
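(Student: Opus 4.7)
The plan is to build $f_n$ by pulling back $f$ along the skeleton $\Pi_n\subset\S_n$ and then filling in each dyadic cell of generation $n$ with a quasiconformal extension. First I set $f_n := f\circ(\pi_n|_{\tilde\Pi_n})^{-1}$ on $\Pi_n$; this is legitimate because Lemma \ref{lemma:-bilip-squares} provides a $3$-bi-Lipschitz homeomorphism $\pi_n|_{\tilde\Pi_n}\colon\tilde\Pi_n\to\Pi_n$. Composing a $3$-bi-Lipschitz map with the $\eta$-quasisymmetry $f|_{\tilde\Pi_n}$ immediately yields that $f_n|_{\Pi_n}$ is $\eta'$-quasisymmetric with $\eta'$ depending only on $\eta$, giving property (b) at once.

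For (a), I need to extend $f_n$ to the interior of each $\pi_n(T_\D)$, $\D\in\calD_n$. The key geometric input is that $\partial T_\D$ is $3$-bi-Lipschitz to the Euclidean square $\partial\pi_n(T_\D)$ via $\pi_n$, hence is a $k$-quasicircle in $\S$ with absolute $k$; Proposition \ref{quasiuniform} then gives that $f(\partial T_\D)$ is a $k'$-quasicircle in $\mathbb{R}^2$ bounding a quasidisk $D_\D$, with $k'=k'(\eta)$. The boundary map $f_n|_{\partial\pi_n(T_\D)}\colon\partial\pi_n(T_\D)\to f(\partial T_\D)$ is $\eta'$-quasisymmetric, so by the standard theory of quasisymmetric extension between quasidisks (via Riemann maps to $\mathbb{D}$, which are uniformly quasisymmetric on the closure of a quasidisk, followed by the Beurling--Ahlfors extension of the resulting $\partial\mathbb{D}\to\partial\mathbb{D}$ map) it extends to an $\eta''$-quasisymmetric homeomorphism $\hat f_\D\colon\pi_n(T_\D)\to\overline{D_\D}$ with $\eta''=\eta''(\eta)$; in particular $\hat f_\D$ is $H$-quasiconformal with $H=H(\eta)$. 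Setting $f_n:=\hat f_\D$ on $\pi_n(T_\D)$ produces a continuous map, since the pieces agree with the skeleton definition on $\partial\pi_n(T_\D)\subset\Pi_n$.

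Global injectivity reduces to showing that the Jordan domains $D_\D$ are pairwise disjoint: for distinct $\D_1,\D_2\in\calD_n$ the interiors $\mathrm{int}(T_{\D_1})$ and $\mathrm{int}(T_{\D_2})$ are disjoint in $\S$, and since $f$ is an embedding one has $f(\S)\cap D_{\D_i}=f(\mathrm{int}(T_{\D_i}))$; a short topological argument then shows $f(\partial T_{\D_2})$ is disjoint from $D_{\D_1}$, forcing $D_{\D_2}\subset\mathbb{R}^2\setminus\overline{D_{\D_1}}$.

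The hard part will be verifying uniform quasiconformality globally. Inside each $\mathrm{int}(\pi_n(T_\D))$ the map $f_n$ is $H$-quasiconformal by construction, but the linear dilatation at a point of $\Pi_n$ involves all adjacent cells. The plan is to show that $f_n$ is $H'$-weakly quasisymmetric on all of $\S_n$ by combining the $\eta''$-quasisymmetry of each $\hat f_\D$ on its closed cell with the $\eta'$-quasisymmetry of $f_n|_{\Pi_n}$, exploiting the uniform relative separation of the cell boundaries. Since $\S_n$ is connected and $N$-doubling by Lemma \ref{slitmeasure}, Lemma \ref{quasirelation} upgrades weak quasisymmetry to quasisymmetry with distortion depending only on $\eta$, and hence to the desired $H$-quasiconformality.
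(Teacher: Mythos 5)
Your construction coincides with the paper's: define $f_n$ on the skeleton by $f\circ(\pi_n|_{\tilde{\Pi}_{n}})^{-1}$, which is quasisymmetric with controlled distortion by Lemma \ref{lemma:-bilip-squares}, and fill in each cell $\pi_n(T_{\D})$ by a quasisymmetric extension of the boundary values; the ``standard theory'' you invoke (Riemann map of a quasidisk plus Beurling--Ahlfors) is exactly the content of Theorem \ref{quasiextend}, which is what the paper uses. Part (b) is therefore complete, and your remarks on why the Jordan domains $D_{\D}$ are pairwise disjoint address a point the paper leaves implicit.

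The gap is in part (a), and it is precisely the step you label ``the hard part'' and leave as a plan. Away from the skeleton each cell map is quasiconformal with constant depending only on $\eta$, but at a point $\xi\in\Pi_{n}$ the linear dilatation compares distances realized in different cells, and this is the entire content of (a). Your proposed route --- show $f_n$ is weakly quasisymmetric on all of $\S_n$ and upgrade via Lemma \ref{quasirelation} --- is viable (the paper's remark after the lemma says as much) but proves more than is needed and is not carried out: the case analysis for triples of points distributed over several cells is exactly where the work lies. The paper's argument is shorter and local: for small $r$, let $\zeta_M,\zeta_m\in\partial B(\xi,r)$ realize $L_{f_n}(\xi,r)$ and $l_{f_n}(\xi,r)$, choose auxiliary skeleton points $\zeta_M',\zeta_m'\in\Pi_{n}\cap\partial B(\xi,r)$ on the boundaries of the same cells that contain $\zeta_M$ and $\zeta_m$, and factor
\[
\frac{|f_n(\zeta_M)-f_n(\xi)|}{|f_n(\zeta_m)-f_n(\xi)|}
=\frac{|f_n(\zeta_M)-f_n(\xi)|}{|f_n(\zeta_M')-f_n(\xi)|}\cdot
\frac{|f_n(\zeta_m')-f_n(\xi)|}{|f_n(\zeta_m)-f_n(\xi)|}\cdot
\frac{|f_n(\zeta_M')-f_n(\xi)|}{|f_n(\zeta_m')-f_n(\xi)|}.
\]
The first two factors are bounded by the cell-wise quasisymmetry constant $\eta_2(1)$ of the extensions, and the third by the skeleton quasisymmetry, giving $H_{f_n}(\xi)\leq \eta_2(1)^2\eta_1(1)$, hence uniform quasiconformality. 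Supplying this estimate (or actually completing your weak-quasisymmetry case analysis) is what is missing from the proposal.
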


\begin{remark}
It is possible to show that the mappings $f_n$ constructed below are in fact uniformly quasisymmetric on $\S_n$, however the details are not illuminating and we do not use this fact in the proof of Lemma \ref{lemma:necessary-condition-for-QS-embedding}.
\end{remark}

To prove Lemma \ref{lemma:qc-extension} we will need an extension result of Bonk, cf. Proposition 5.3 in \cite{Bonk}, which is a generalization of the classical Beurling-Ahlfors extension \cite{Ahlfors-Beurling}. 

\begin{theorem}\label{quasiextend}
	Let $D,D'\subset\mathbb{C}$ be Jordan domains and $f: \partial D \to \partial D'$ be an $\eta$-quasisymmetric mapping. Suppose that $\partial D$ is a $k$-quasicircle. If
	\[
	\min\{\mathrm{diam}(D), \mathrm{diam}(D')\} \leq \delta
	\]
	for some $\delta > 0$, then $f$ can be extended to an $\eta'$-quasisymmetric mapping $F: D \to D'$ where $\eta'$ only depends on $\delta, k$ and $\eta$.
\end{theorem}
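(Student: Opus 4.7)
The plan is to reduce the problem to extending a quasisymmetric self-homeomorphism of $\partial\mathbb{D}$ to a quasisymmetric self-homeomorphism of $\overline{\mathbb{D}}$ (the disk version of the Beurling--Ahlfors theorem), by first uniformizing both Jordan domains to the standard disk.

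\emph{Step 1 (Both boundaries are uniform quasicircles).} Since $f:\partial D\to\partial D'$ is $\eta$-quasisymmetric and $\partial D$ is a $k$-quasicircle, Proposition \ref{quasiuniform} applied to the singleton family $\{\partial D\}$ gives that $\partial D'$ is a $k'$-quasicircle with $k'=k'(k,\eta)$. Thus both curves are doubling quasicircles with constants depending only on $k$ and $\eta$.

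\emph{Step 2 (Uniformize each quasidisk).} By Ahlfors' characterization of quasidisks, for any bounded Jordan domain $\Omega\subset\mathbb{C}$ whose boundary is a $k_0$-quasicircle, a Riemann map $\Phi_\Omega:\overline{\mathbb{D}}\to\overline{\Omega}$ extends continuously to $\partial\mathbb{D}$ as a quasisymmetric homeomorphism; after a M\"obius normalization sending three fixed points of $\partial\mathbb{D}$ to three boundary points of $\Omega$ at comparable mutual distances, the resulting map is $\eta_0$-quasisymmetric on $\overline{\mathbb{D}}$ with $\eta_0$ depending only on $k_0$ and $\diam(\Omega)$. Apply this to $D$ and $D'$ to obtain quasisymmetric parametrizations $\Phi:\overline{\mathbb{D}}\to\overline{D}$ and $\Phi':\overline{\mathbb{D}}\to\overline{D'}$. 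The diameter hypothesis enters precisely here: assume without loss of generality $\diam(D)\leq\delta$, and use the estimate (\ref{diam}) applied to $f$ on a pair of points realizing $\diam(\partial D)$ to control $\diam(D')$ from above in terms of $\delta$, $\eta$ and $k$; the parametrizations $\Phi,\Phi'$ then have distortion functions depending only on $\eta$, $k$, and $\delta$.

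\emph{Step 3 (Beurling--Ahlfors on the disk).} The conjugated boundary map
\[
g \;:=\; (\Phi')^{-1}\circ f\circ \Phi\,\big|_{\partial\mathbb{D}}\;:\;\partial\mathbb{D}\to\partial\mathbb{D}
\]
is an $\eta''$-quasisymmetric self-map of the circle by the composition property stated in the excerpt, with $\eta''=\eta''(\eta,k,\delta)$. The classical Beurling--Ahlfors extension theorem (in its disk form) produces a self-homeomorphism $G:\overline{\mathbb{D}}\to\overline{\mathbb{D}}$ with $G|_{\partial\mathbb{D}}=g$ that is $\eta'''$-quasisymmetric for some $\eta'''$ depending only on $\eta''$. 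Setting $F:=\Phi'\circ G\circ \Phi^{-1}:\overline{D}\to\overline{D'}$ and composing once more yields an $\eta'$-quasisymmetric map with $F|_{\partial D}=f$ and $\eta'=\eta'(\eta,k,\delta)$, as required.

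\emph{Main obstacle.} The principal bookkeeping burden lies in Step 2: producing the quasisymmetric parametrizations $\Phi,\Phi'$ of the two quasidisks with \emph{quantitative} control of the distortion in terms only of $k$, $\eta$, and $\delta$. Ahlfors' characterization gives this in principle, but one must verify that the diameter bound $\delta$ is enough to fix a normalization --- it bounds $\diam(D)$ directly, and bounds $\diam(D')$ through the quasisymmetry of $f$, so no residual scale ambiguity survives. Once this is done, Steps 1 and 3 are routine applications of Proposition \ref{quasiuniform}, the composition lemma for quasisymmetric maps, and the classical Beurling--Ahlfors extension.
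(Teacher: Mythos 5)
The paper does not actually prove Theorem \ref{quasiextend}: it is quoted from Bonk \cite{Bonk} (Proposition 5.3 there, stated for Jordan regions in $\widehat{\mathbb{C}}$ with the chordal metric) and reduced to the Euclidean setting via Proposition \ref{quasiequal}. Your argument is therefore a genuine re-proof, and the route you choose --- uniformize both quasidisks by Riemann maps that extend quasisymmetrically, conjugate $f$ to a circle homeomorphism, extend (preferably by Douady--Earle rather than the half-plane Beurling--Ahlfors construction, to keep quantitative control on the disk), and conjugate back --- is the standard one. Steps 1 and 3 are sound. In Step 2 you should be explicit that a conformal map onto a quasidisk, or a $K$-quasiconformal self-map of $\widehat{\mathbb{C}}$ or of $\mathbb{D}$, is a priori only quasim\"obius; passing to quasisymmetry with distortion depending only on $k$ requires exactly the three-point normalization you describe, together with the (easy) existence on any Jordan curve of a triple of points with pairwise distances comparable to $\diam(\partial\Omega)$.

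The one concrete error is the claim that (\ref{diam}) lets you ``control $\diam(D')$ from above in terms of $\delta$, $\eta$ and $k$.'' That is impossible: a similarity $z\mapsto\lambda z$ with $\lambda$ arbitrarily large is $1$-quasisymmetric, so no absolute bound on $\diam(D')$ can follow from the quasisymmetry of $f$ and a bound on $\diam(D)$; inequality (\ref{diam}) only controls ratios of diameters of nested subsets, not absolute sizes. Fortunately the error is harmless to your scheme: Euclidean quasisymmetry is scale-invariant, so the distortion of the normalized parametrization $\Phi'$ depends only on $k'=k'(k,\eta)$ and not on $\diam(D')$ at all. In your argument the hypothesis $\min\{\diam(D),\diam(D')\}\leq\delta$ is simply never needed and $\eta'$ comes out independent of $\delta$; the $\delta$-dependence in the statement is an artifact of the chordal-metric formulation in \cite{Bonk}, where the diameter bound is what makes the chordal and Euclidean metrics quantitatively comparable (cf. Proposition \ref{quasiequal}). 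Either drop the appeal to the diameter hypothesis or state explicitly that it plays no role in the Euclidean version.
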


The original theorem in \cite{Bonk} deals with Jordan regions in $\widehat{\mathbb{C}}$, however Theorem \ref{quasiextend} is easily obtained from Bonk's result.


\begin{proof}[Proof of Lemma \ref{lemma:qc-extension}]
 To define the embeddings $f_n:\S_n\hookrightarrow\mathbb{R}^2$ we will first define them locally on the lifts of (closed) dyadic squares $\D \subset [0,1]^2$ using Bonk's extension result above. The definition will be such that it will be consistent along the common parts of boundaries of such lifts in $\S_n$. 
	
For $n\geq 0$ and a dyadic square $ \Delta\in\mathcal{D}_{n}$ let $T=T_\Delta$ be the ``dyadic square" in $\S$ as before.
		
%
Observe that if $\Delta\in\mathcal{D}_{n}$ then $\Delta$ does not contain a slit of $S_n$ in its interior and hence the path metric on $\S_n$ restricted to $\pi_{n}(T)\subset\S_n$ coincides with the Euclidean metric on $\Delta=\pi_{0,n}(\pi_{n}(T))$.  Therefore $\pi_{n}(T)$ is isometric to a closed Jordan domain in $\mathbb{C}$ with the boundary which is a $\sqrt{2}$-quasicircle (since it is a square). On this boundary curve we define the following mapping 
\[
	f_n^{\partial T}:= f|_{\partial T} \circ (\pi_n|_{\partial T})^{-1} : \partial \pi_n(T) \to \mathbb{R}^2.
\]
Since $f$ is $\eta$-quasisymmetric and by Lemma \ref{lemma:-bilip-squares} $\pi_n|_{\partial T}^{-1}$ is $3$-bi-Lipschitz, it follows from  Lemma \ref{lemma:QSproperties} that $f_n^{\partial T}$ is an $\eta_1$-quasisymmetric map, where $\eta_1(t)=\eta(3t)$. Hence all the conditions of Theorem \ref{quasiextend} are satisfied and applying it to $f_n^{\partial T}$ and $\pi_n(T)$ we obtain that for every $\Delta\in\mathcal{D}_{n}$ there is a quasisymmetric map $f_n^{T} =f_n^{T_\Delta}  : \pi_n(T)  \to \mathbb{R}^2$
	which extents $f_n^{\partial T}$. Moreover, $f_n^{T}$ is $\eta_2$-quasisymmetric, where $\eta_2$ depends only on $\eta_1$, the quasiconformal constant of the boundary curves (i.e., $\sqrt{2}$ in this case), and diameters of these circles, which are bounded by $\textrm{diam}(\mathscr{S})$. Thus, $\eta_2$ is independent of  $n$ as well as of the particular dyadic square $\Delta\subset\mathcal{D}_{n}$ (or $T=T_\Delta$).	
	
	Combining the functions $f_n^{T}$ produces a homeomorphism $f_n: \mathscr{S}_n \to \mathbb{R}^2$. More precisely, if $\xi\in\S_{n}$ is such that $\xi \in \pi_n(T_\Delta)$ for some $\D\in\calD_{n}$ we let
	\begin{align}\label{extension}
	f_n(\xi)=  f_n^{T_\Delta}(\xi).
	\end{align}
 Note that $f_n$ is well defined since the squares $\{\pi_n(T_{\Delta})\}_{\Delta\in\mathcal{D}_{n}}$ cover $\mathscr{S}_n$ and the maps $f_n^{T_\Delta}$ coincide at points which are common to different dyadic squares of generation $n$ in $\S_{n}$.
 
	
		
To prove (a) note that $f_n |_{\Pi_{n}} = f \circ (\pi_n^{-1} |_{\Pi_{n}}).$ Since $f$ is $\eta$-quasisymmetric and $\pi_n^{-1}$ is $3$-bi-Lipschitz by Lemma \ref{lemma:-bilip-squares},  it follows that $f_n |_{\Pi_{n}}$ is $\eta_1$-quasisymmetric, where $\eta_1(t)=\eta(3t)$.		
		
For part $(b)$ note that $f_n$ is a homeomorphism, which is $\eta_2(1)$ - quasiconformal at every point $\xi\in\S_n$ such that $\pi_{0,n}(\xi)\in\mathrm{int}(\D)$ for some $\D\in\calD_{n}$. 

Next, suppose $\xi\in \Pi_{n}$. If $\xi$ does not belong to a slit then for $r$ sufficiently small  the ball $B(\xi,r)$ equipped with the metric $d_{\mathscr{S}_n}$ is isometric to the the same ball equipped with the Euclidean metric. Pick such an $r>0$ and denote by $\zeta_M$ and $\zeta_m$ the points at which the quantity $|f_n(\zeta)-f_n(\xi)|$ on the circle $\partial B(\xi,r) \subset\S_n$ is maximized and minimized, respectively.  Since  $f_n$ is a homeomorphism, we have
\begin{align*}
\frac{L_{f_n}(\xi,r)}{l_{f_n}(\xi,r)} 
&= \frac{\max\{|f_n(\zeta)-f_n(\xi)|: {d_{\S_n}(\zeta,\xi)\leq r\}}}{\min\{|f_n(\zeta)-f_n(\xi)|: {d_{\S_n}(\zeta,\xi)\geq r\}}}{=}\frac{|f_n(\zeta_M)-f_n(\xi)|}{|f_n(\zeta_m)-f_n(\xi)|} \\
&=\frac{|f_n(\zeta_M)-f_n(\xi)|}{|f_n(\zeta_M')-f_n(\xi)|} \cdot \frac{|f_n(\zeta'_m)-f_n(\xi)|} {|f_n(\zeta_m)-f_n(\xi)|}\cdot \frac{|f_n(\zeta'_M)-f_n(\xi)|}{|f_n(\zeta'_m)-f_n(\xi)|},
\end{align*}
where $\zeta'_M,\zeta'_m\in\Pi_{n}\cap \partial B(\xi,r)$ belong to the boundaries of the same $n$-th generation dyadic squares in $\S_{n}$ as $\zeta_M$ and $\zeta_m$. Therefore we have 
$H_{f_n}(\xi) \leq \eta_2(1)^2 \eta_1(1).$

\begin{figure}[tp]
	\centering
	\includegraphics[height=2in]{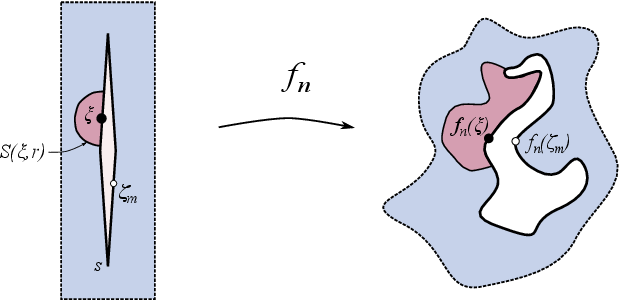}
	\caption{Estimating $l_{f_n}(\xi)=|f_n(\xi)-f_n(\zeta_m)|$ from below. A small neighborhood $A$ of the slit $s$ (drawn as a wedge) which is a topological annulus. Similarly, $A\setminus B(\xi,r)$ is also an annulus if $r>0$ is small enough. Therefore $\zeta_m$ belongs to the inner boundary component of the grey annulus on the left, i.e., $\zeta_m\in S(\xi,r)\cup (s\setminus B(\xi,r))$.}\label{fig:qc-on-slit}
\end{figure}	

If $\xi\in\Pi_n$ then we may take $r>0$ small enough so that $B(\xi,r)$ does not intersect any other slits. We also denote $S(\xi,r)=\{\zeta\in \S_n : d_{\S_n}(\zeta,\xi)=r\}$.

Next, just as above let $\zeta_M$ and $\zeta_m$ be such that 
\begin{align*}
L_{f_n}(\xi,r)=|f_n(\zeta_M)-f_n(\xi)|
\quad \mbox{and} \quad l_{f_n}(\xi,r)&=|f_n(\zeta_m)-f_n(\xi)|.
\end{align*}

Since $f_n$ is a homeomorphism, then either $|\zeta_M-\xi|=r$, or $|\zeta_M-\xi|\leq r$ and $\zeta_M\in \Pi_n$. In either case, there is a point $\zeta_M'\in \Pi_n$ on the boundary of the dyadic square containing $\zeta_M$, such that $|\zeta_M'-\xi|=r\geq |\zeta_M-\xi|$.
Therefore,
\begin{align}\label{est:distortion1}
L_{f_n}(\xi,r)=|f_n(\zeta_M)-f_n(\xi)|\leq \eta_2(1) |f_n(\zeta_M')-f_n(\xi)|.
\end{align}
where $\zeta_M'\in\Pi_n\cap S(\xi,r)$.

To estimate $l_{f_n}(\xi,r)$, note that  $\zeta_m\in S(\xi,r)\cup (s\setminus B(\xi,r))$, since $f_n$ is a homeomorphism. Indeed, if $r$ is small enough then there is  a neighborhood $A$ of the slit $s$ which is a topological annulus such that $B(\xi,r)\subset A$, see Figure  \ref{fig:qc-on-slit}. Therefore, $\zeta_m$ belongs to the inner boundary of the annulus $A\setminus B(\xi,r)$, which may be written as $S(\xi,r)\cup (s\setminus B(\xi,r))$. If $\zeta_m\in S(\xi,r)$ then there is a point $\zeta_m'\in S(\xi,r)\cap\Pi_n\cap T$, where $T$ is a dyadic square containing $\zeta_m$. Therefore
\begin{align}\label{est:distorion2}
|f_n(\zeta_m)-f_n(\xi)| \geq \frac{1}{\eta_2(1)} |f_n(\zeta_m')-f_n(\xi)|.
\end{align}
On the other hand, if $|\zeta_m-\xi|>r$ then $\zeta_m \in s$, and choosing any point $\zeta_m'\in s \cap S(\xi,r)$ we obtain
 \begin{align}\label{est:distortion3}
|f_n(\zeta_m)-f_n(\xi)| \geq \frac{1}{\eta_1(1)} |f_n(\zeta_m')-f_n(\xi)|.
\end{align}

Combining  (\ref{est:distortion1}),(\ref{est:distorion2}), and (\ref{est:distortion3}) we obtain
\begin{align*}
\frac{L_{f_n}(\xi,r)}{l_{f_n}(\xi,r)} 
&\leq \max(\eta_2^2(1),\eta_1(1)\eta_2(1))\frac{|f_n(\zeta_M')-f_n(\xi)|}{|f_n(\zeta_m')-f_n(\xi)|}\\
&\leq \max(\eta_1(1)\eta_2^2(1),\eta_1^2(1)\eta_2(1)),
\end{align*} 
where the last inequality holds because $\zeta_M',\zeta_m'\in\Pi_n\cap S(\xi,r)$ and $f_n$ is $\eta_1$-quasisymmetric on $\Pi_n$ by part $(a)$.
\end{proof}

\begin{proof}[Proof of Lemma \ref{lemma:necessary-condition-for-QS-embedding}]
Assume there exists a quasisymmetric embedding $f:\S\to\mathbb{R}^2.$ 
%
%
%
%
By Lemma \ref{lemma:qc-extension} there exists an $H$-quasiconformal map $f_n:\S_n\to\mathbb{R}^2$ such that 
\begin{align*}
f_n(\S_n)=\overline{\Omega_n' \backslash \left( \bigcup_{j=0}^{m_n-1} K_{n,j}'\right) }\subset\mathbb{R}^2
\end{align*}
 where $m_n=1+4+\ldots+4^{n-1}$, and $\{\mathrm{int}(K_{n,j}')\}$ are pairwise disjoint Jordan domains compactly contained in the Jordan domain $\Omega_n'$. Moreover, since $f_n|_{\Pi_n}$ is $\eta'$-quasisymmetric, with $\eta'$ depending on $\eta$, we have that $\Omega_n'$ and $K_{n,j}'$'s are all $k$-quasidisks, with $k$ depending only on $\eta$.
We denote $\mathcal{K}_n'=\{{K_{n,0}'},\ldots, {K_{n,m_n-1}'}\}$ and $K_n'=\cup_j K_{n,j}'$.

Observe that we may assume that the image of the ``outer square" of $\S_n$ under $f_n$ is the ``outermost" boundary component $\g_n'$ of $\Omega_n'$, i.e., $\g_n'= f_n(\pi_{0,n}^{-1}(\partial{\mathbb{U}}))$ is the boundary of the unbounded component of $\mathbb{R}^2\setminus \Omega_n'$. This can be achieved by post-composing $f_n$ with an appropriate M\"obius transformation of the plane and denoting the resulting mapping by $f_n$ again. Indeed, we may first post-compose $f_n$ with a scaling so that $\diam(f_n(\pi_{0,n}^{-1}(\partial{\mathbb{U}})))=1$. Then, one can compose the result with a reflection in the boundary of a largest disk, of radius say $\alpha_n$, inscribed in the domain bounded by $f_n(\pi_{0,n}^{-1}(\partial{\mathbb{U}}))$. Since $f_n(\pi_{0,n}^{-1}(\partial{\mathbb{U}}))$ is a $k'$-quasicircle, and therefore is quasi-round, the radius $\alpha_n$ of the disk is bounded from below by a constant depending only on $k'$.   Therefore, the resulting mapping will be uniformly quasisymmetric on $f_n(\S_n)$, since by (\ref{diam}) we have
 $$
 {\diam \left(f_n(\S_n)\right)} \leq \eta'(2)\cdot{\diam(f_n(\pi_{0,n}^{-1}(\partial{\mathbb{U}})))}=\eta'(2).$$

Let $L_n=\pi_{0,n}^{-1}(L)$ and $R_n=\pi_{0,n}^{-1}(R)$ be the ``vertical sides"  of $\mathscr{S}_n$. Denote
\begin{align*}
\G_n:=(f_n(L_n),f_n(R_n);\Omega_n').
\end{align*} 
Thus, $\G_n$ is the family of curves in $\Omega_n'$ connecting the images of lifts of the vertical sides of the unit square $\mathbb{U}$ under $f_n$. Next we observe that for all $n\geq 1$  we have 
\begin{align}\label{trmod-estimate1}
\Mod_{\mathbb{U},\calK_n}(\G) \geq H^{-1} \Mod_{\Omega_n',\calK_n'}(\G_n).
\end{align}
Indeed,  the identity map $id_n$ from $S_n=\mathbb{U}\setminus K_n$ equipped with the Euclidean metric to $S_n$ with the inner metric $d_{S_n}$ is a local isometry and therefore is $1$-quasiconformal. Hence, by letting $\phi_n:=f_n \circ id_n$, we have that
$\phi_n : {\mathbb{U}}\setminus K_n \to \Omega_n'\setminus K_n'$
is an $H$-quasiconformal map between domains in $\mathbb{R}^2$. Moreover, the mapping $\phi_n$  descends to a homeomorphism between the quotient spaces
$\tilde{\phi}_n:(\mathbb{U})_{\calK_n} \to (\Omega_n')_{\mathcal{K}_n'},$
and if $\tilde{\G}_{}$ and $\tilde{\G}_n$ are the images of $\G_{}$ and $\G_n$ under the quotient maps, then
$\tilde{\phi}_n(\tilde{\G}_{})=\tilde{\G}_n.$
Therefore, by (\ref{trmod-quotients}) and Lemma \ref{lemma:mod-comparison} we have that
\begin{align*}
\Mod_{\mathbb{U},\calK_n}(\G) = \Mod_{\mathbb{U},\calK_n}(\tilde{\G}) \geq H^{-1} \Mod_{\Omega_n',\calK_n'}\left(\tilde{\phi}_n(\tilde{\G})\right)=H^{-1} \Mod_{\Omega_n',\calK_n'} (\G_n).
\end{align*}

Since $K_{n,j}'$'s are $k$-quasidisks, by Lemma \ref{lemma:mod-comparison} we have that 
\begin{align}\label{trmod-estimate2}
\mathrm{Mod}_{\Omega_n',\mathcal{K}_n'} (\G_n) \geq \min \{ c_1, c_2  \mod(\G_n)\}.
\end{align}
Moreover, since $\Omega_n'$ is a $k$-quasidisk it is then Loewner (see, e.g., \cite[Proposition 7.3]{Bonk}) and therefore 
\begin{align}\label{trmod-estimate3}
\mod (\G_n) \geq \psi (\Delta(f_n(L_n),f_n(R_n))),
\end{align}
where $\psi$ depends only on $k$ (and therefore on $\eta$). However, if $x_n\in L_n$ and $y_n\in R_n$ are such that $\dist(f_n(x_n),f_n(y_n)) = \dist(f_n(L_n),f_n(R_n))$ then 
\begin{align}\label{reldist-estimate}
\Delta(f_n(L_n),f_n(R_n)) \geq  \frac{\dist(f_n(x_n),f_n(y_n))}{\diam f_n(L_n\cup R_n) } \geq \frac{1}{2\eta \left(\frac{\diam (L_n\cup R_n)}{\dist(x_n,y_n)}\right)}
\geq \frac{1}{2\eta(2)},
\end{align}
since $\dist(x_n,y_n)\geq 1$ and $\diam (L_n\cup R_n) \leq \diam \mathscr{S}_n \leq 2$.

From (\ref{trmod-estimate1}),(\ref{trmod-estimate2}), (\ref{trmod-estimate3}), and (\ref{reldist-estimate}) it follows that for all $n\geq 1$ we have 
\begin{align*}
\Mod_{\mathbb{U},\mathcal{K}_n} (\G) \geq H^{-1} \min\left\{ c_1,c_2\psi \left( \frac{1}{2\eta(2)} \right) \right\},
\end{align*}
where $H,c_1,c_2$ and $\psi$ depend only on $\eta$.
\end{proof}

%
%
%
%


\section{Upper bounds}\label{Section:non-embedding}

In this section we prove the ``only if" direction of Theorem \ref{thm:main}. For this we estimate the transboundary modulus of curve families connecting the vertical sides of the unit square in dyadic slit domains. In particular we show that if the sequence of relative sizes $r_i$ of slits is not square summable then the transboundary modulus approaches $0$. Combining with the results of Section \ref{Section:necessary-condition} we show that if $\mathbf{r}\notin\ell^2$ then there is no quasisymmetric embedding of $\S_{\mathbf{r}}$ into the plane, cf. Theorem \ref{thm:necessity}.

\subsection{Estimates for Transboundary Modulus in slit domains }\label{Section:trmod-in-slit-domains} The following lemma is the main result of this section. Below we use the same notation as in Section \ref{Section:necessary-condition}. 

\begin{lemma}\label{lemma:transmod=0}
Let $\G$ be the collection of all the curves in the unit square $[0,1]^2$ connecting the vertical edges of the square.  Suppose $\mathbf{r}=\{r_i\}_{i=0}^{\infty}$ is a sequence of numbers in $(0,1)$ such that $\mathbf{r}\notin\ell^2$. Then for every $0<\epsilon<1$ we have
\begin{equation}\label{trmod}
	\Mod_{\mathbb{U},\mathcal{K}_n}(\Gamma) \leq \prod_{i=0}^{n}(1-\frac{1}{8}\epsilon r_i^2)+ 3\epsilon,
	\end{equation}
for every $n\geq 0$.
In particular, if $\{r_i\}_{i=0}^{\infty}\notin\ell^2$ then
\begin{align}\label{limit:trmod0}
  \lim_{n\to\infty}\Mod_{\mathbb{U},\calK_n} (\G)=0.
\end{align}
\end{lemma}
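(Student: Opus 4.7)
The plan is to establish (\ref{trmod}) by constructing, for each $n$ and each $\epsilon\in(0,1)$, an admissible transboundary mass distribution on $(\mathbb U,\mathcal K_n)$ whose $2$-mass is bounded by the right-hand side; the limit (\ref{limit:trmod0}) then follows because $\mathbf r\notin\ell^2$ forces $\prod_{i=0}^n(1-\epsilon r_i^2/8)\to 0$, giving $\limsup_n\Mod_{\mathbb U,\mathcal K_n}(\Gamma)\le 3\epsilon$ for each fixed $\epsilon>0$, and then sending $\epsilon\to 0$ finishes the argument. For the construction, I would attach to each slit $s=s(\Delta)\in\mathcal K_n$ of generation $i$ a rectangular ``halo'' $A_s\subset\Delta$ spanning the full horizontal width $2^{-i}$ of $\Delta$ and having the same vertical range as $s$, so that the $y$-projections of $A_s$ and of $s$ coincide. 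I then set $\rho=1-\beta_i$ on $A_s$, $\rho=1$ off $\bigcup_s A_s$, and take slit weight $\rho_s=\beta_i\cdot 2^{-i}$, for small parameters $\beta_i$ comparable to $\epsilon r_i$. The point of this balance is that the density deficit incurred by a horizontal line crossing $A_s$ is exactly $\beta_i\cdot 2^{-i}=\rho_s$, and a horizontal line enters $A_s$ if and only if it meets $s$, so every horizontal line has transboundary length equal to $1$.

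The main obstacle is verifying admissibility for non-horizontal curves $\gamma\in\Gamma$, since such a curve may enter a halo $A_s$ without touching $s$ and thereby collect density savings without triggering the compensating weight. The key geometric lemma to establish is that any component of $\gamma\cap A_s$ in this situation must enter and exit through the top or bottom edges of $A_s$---crossing the vertical line $x=x_c(s)$ inside $A_s$ would force $\gamma$ to meet $s$---so each such component has length at least the height $r_i 2^{-i}$ of $A_s$ and contributes that much ``vertical excess'' to $L(\gamma)$ beyond its horizontal projection. Summing over all halos and balancing the total vertical excess against $\sum_s\beta_i L_{A_s}(\gamma)$, one shows that the admissibility inequality survives after a small global upscaling of $\rho$ by a factor $1+O(\epsilon)$ that absorbs near-horizontal curves and halo overlaps; this global adjustment is the source of the additive $3\epsilon$ in the bound.

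For the mass computation, since gen-$i$ halos are $4^i$ in number and each has area $r_i\cdot 4^{-i}$, the density contribution is bounded by $1-\sum_i\beta_i(2-\beta_i)r_i$ and the slit-weight contribution is $\sum_s\rho_s^2=\sum_i\beta_i^2$. With $\beta_i$ chosen proportionally to $\epsilon r_i$, the first-order mass bound is roughly $1-\epsilon\sum r_i^2+O(\epsilon^2)$; to upgrade this additive estimate to the multiplicative product $\prod_{i=0}^n(1-\epsilon r_i^2/8)$ I would iterate generation-by-generation, applying the construction self-similarly inside each dyadic subsquare so that per-generation savings compound multiplicatively on the prior mass rather than summing. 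The precise constants $1/8$ and $3\epsilon$ emerge from careful bookkeeping of the $\epsilon^2$-corrections, of the overlaps between halos at nested generations, and of the global density boost; I anticipate that the most delicate technical step is the quantitative form of the vertical-excess lemma combined with an overlap estimate between halos at different scales, since without those two ingredients the naive construction only gives the trivial bound $\Mod\le 1$.
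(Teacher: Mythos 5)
Your overall strategy (build an explicit admissible mass distribution whose mass is controlled by the right-hand side of (\ref{trmod}), with slit weights calibrated to the horizontal ``savings'' a curve can collect near a slit) is the right one, but your specific construction has a genuine gap in the admissibility verification, and the key geometric lemma you propose is false. A component of $\gamma\cap A_s$ that avoids $s$ need not enter and exit through the top or bottom edges of $A_s$: since $A_s$ spans the full width of $\Delta$, such a component can enter and leave through the vertical sides of $\Delta$, or merely clip a corner, and can have arbitrarily small length. Worse, the quantitative balance fails even where some vertical excess is present. Consider a curve that enters $A_s$ at a height just below the top endpoint of $s$, runs nearly horizontally through the left half of $A_s$ up to $x=x_c(s)-\delta$, pops over the top of $s$ with negligible vertical motion, and returns to traverse the right half. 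It collects a density deficit of essentially $\beta_i\cdot 2^{-i}=\rho_s$ while triggering no slit weight, and its vertical excess is as small as you like; so no inequality of the form ``deficit $\le$ vertical excess'' holds. Summing over generations, a curve can repeat this at every scale, and the total uncompensated deficit is of order $\sum_{i\le n}\beta_i\asymp\epsilon\sum_{i\le n}r_i$, which is unbounded in $n$ (e.g.\ for $r_i\equiv 1/2$) and therefore cannot be absorbed by a global upscaling of $\rho$ by $1+O(\epsilon)$. The additive $3\epsilon$ cannot be produced this way.

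The paper's construction differs precisely where yours breaks: the discount region attached to $s$ is a collar of width $\epsilon\, l(s)$ (not the full width $2^{-i}$ of $\Delta$), with density $0$ in its middle portion and density $1$ in two terminal $\epsilon l(s)\times\epsilon l(s)$ buffer squares; the slit weight $\epsilon\, l(s)$ equals the collar's width. Then the maximal horizontal progress a curve can get for free through one collar is its width, and a curve skipping the omitted region without meeting $s$ must cross a buffer square vertically, paying length $\ge\epsilon l(s)$ at density $1$ --- an exact, scale-by-scale accounting that your full-width halo does not admit. Two further points in your sketch are also unresolved: the halos of different generations overlap heavily (their total area is $\sum_i r_i$, possibly infinite), so the density assignment $1-\beta_i$ is ambiguous and the claimed additive mass bound $1-\epsilon\sum r_i^2+O(\epsilon^2)$ can be negative; and the passage from an additive to the multiplicative bound $\prod(1-\epsilon r_i^2/8)$ is only asserted. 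The paper handles the latter by a recursion on the area of the residual set over dyadic squares, after first reducing to the case where all $r_i$ and $\epsilon$ are powers of $2$ so that collars at different scales are either disjoint or nested (this reduction is also the source of the factor $1/8$). As written, your argument does not yield (\ref{trmod}).
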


Before proceeding to the proof we observe that by combining Lemma \ref{lemma:transmod=0} with Lemma \ref{lemma:necessary-condition-for-QS-embedding} we obtain the necessity in Theorem \ref{thm:main}.

\begin{corollary}\label{thm:necessity}
If $\mathbf{r}\notin\ell^2$ then there is no quasisymmetric embedding of $\S=\S_{\mathbf{r}}$ into the plane $\mathbb{R}^2$. 
\end{corollary}

\begin{proof}[Proof of Lemma \ref{lemma:transmod=0}]
The proof below is similar to proofs in \cite{Hakobyan}, where  estimates for the classical modulus in slit domains were obtained. However, transboundary modulus in general can be larger than the classical modulus and therefore the results in this section do not follow directly from \cite{Hakobyan}. 

\subsubsection{Constructing mass distribution $\varrho_n^{\epsilon}$} We will first prove the estimate (\ref{trmod}) assuming that the sequence $r_i$ is such that for every $i\geq 0$ we have $r_i=2^{-j_i}$ for some $j_i\geq 1$, and $\epsilon=2^{-m}$ for some $m\geq 1$. The estimate is obtained by defining a particular mass distribution for the pair $(\mathbb{U}, \mathcal{K}_n)$. In order to do that, new notations are introduced below.

Let  $s=s(\D)=\{x\}\times [a,b] \subset \mathbb{U}$ be a slit of length $l(s)=b-a$. For $\epsilon\in(0,1)$ the \emph{$\epsilon$-collar} of $s$ is the rectangle $s^\epsilon =  (x,x+\epsilon l(s)) \times s$. Equivalently,
\[
s^\epsilon=s+(0,\epsilon l(s))=\{t+x : t \in s, x\in (0,\epsilon l(s))\}.
\]

Let ${t}(s^\epsilon), b(s^\epsilon), \ell(s^\epsilon), r(s^\epsilon)$ be the top, bottom, left, and right sides of $s^\epsilon$, respectively. Note that $\ell(s^\epsilon)=s$. 

\begin{lemma}\label{lemma:disjoint}
Assume that $r_n=1/2^{j_n}, n\geq 1,$ and $\epsilon=2^{-m}$  for some natural numbers $\{j_n\}_{n=1}^{\infty}$ and $m\geq1$. 
Then the $\epsilon$-collars of any two slits $s$ and $s'$ are either disjoint, or one is completely contained in the other.
\end{lemma}

\begin{proof}
If $s=s(\D)=\{x\}\times[a,b]$ with $\D\in\calD_n$, $r_n=2^{-j_n}$ and $\epsilon=2^{-m}$ then $\overline{s^{\epsilon}}$ is a rectangle that can be written as a union of $\epsilon^{-1}=2^m$ dyadic squares of generation $N=n+j_n+m$. Therefore, if $\D'$ is a dyadic subsquare of $\D$ of generation $k\geq N$ then it is either disjoint from $\overline{s^{\epsilon}}$ or is completely contained in it and the same is true for $s'=s(\D')$. On the other hand, if $\D'$ is a dyadic square of generation $k\leq N-1$ in $\D$, and $s'=s(\D')=\{x'\}\times[a',b']$, then  the distance between $x$ and $x'$ is at least a half of the sidelength of $\D'$ and therefore
\begin{align*}
|x-x'|\geq \frac{1}{2} 2^{-k} \geq 2^{-1-(N-1)} =2^{-N}.
\end{align*}
Since the width of $s$ is exactly $2^{-N}$ and $(s')^{\epsilon}$ is located to the right of the slit $s'$, it follows that the $\epsilon$-collars of $s$ and $s'$ are disjoint if $x'>x$. In the case $x'<x$ there is nothing to prove since any dyadic square $\D'$ contained in the left half of $\D$ does not intersect $s^{\epsilon}$.
\end{proof}

%
%
%
%
%

\begin{figure}[tp]
	\centering
	\includegraphics[height=1.5in]{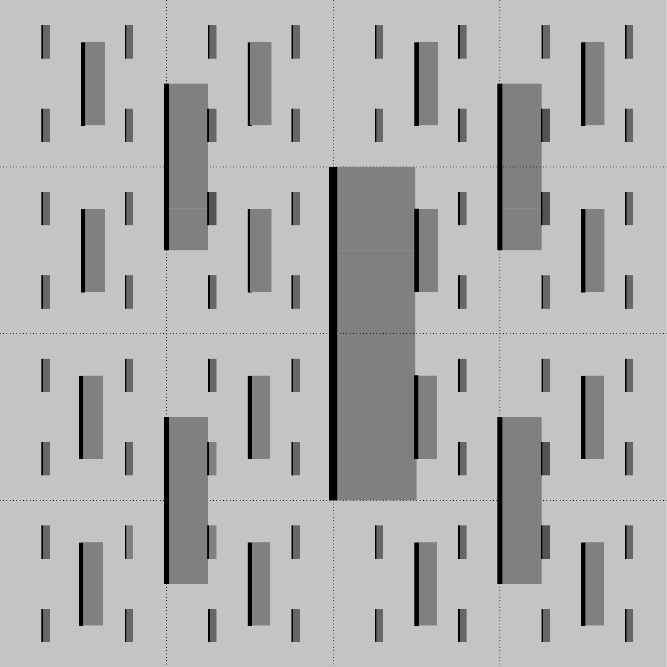}
	\caption{Choosing a subset $\mathcal{K}_{\epsilon}$ from the family of slits $\mathcal{K}$ so that the $\epsilon$-collars $v_j^{\epsilon}$ (darker grey rectangles) are disjoint. Note that the slits within the collars are discarded (not included in $\mathcal{K}_{\epsilon}$). In this example $\epsilon=1/4$ and $r_i=1/2$ for $i=0,1,2,3$.}
\end{figure}

From the above it follows that it is possible to select an infinite subsequence $\calK_{\epsilon}=\{s_{i_n}\}$ in $\mathcal{K}$ for which the $\epsilon$-collars are disjoint (i.e., the ``smaller" collars which are contained in ``larger" ones are not enumerated). Indeed, we may first enumerate $\mathcal{K} = \{s_i\}_{i=0}^\infty$ so that the lengths of the slits are non-increasing, i.e., $l(s_i) \geq l(s_{i+1})$ for every $i \geq 0$. Then, we choose the sequence $v_n:=s_{i_n}$ by induction. Let $v_0 =s_0$. Suppose for $n \geq 1$ the sequence  $v_0, \ldots, v_{n-1}$ has been defined, and 
%
%
let  $v_n = s_{i_n}$, where
\begin{align*}
i_n = \min \left\{ j :  s_j^\epsilon \cap \left( \bigcup_{i}^{n-1}v_i^\epsilon \right) = \emptyset \right\}.
\end{align*}
Since the set $[0,1]^2\setminus\left(\bigcup_i^{n-1} v_i^{\epsilon}\right)$ always contains a dyadic square (it has a nonempty interior), the process never ends and  the collars $\{v_i^\epsilon\}_{i=0}^\infty$ are disjoint by  construction. Let 
\begin{align*}
\mathcal{K}_\epsilon = \{v_i\}_{i=0}^\infty
\end{align*}
denote this subsequence. Moreover, for $n\geq 0$ let
\begin{align*}
\mathcal{K}_{\epsilon,n} &= \mathcal{K}_{\epsilon} \cap \mathcal{K}_n= \{v_i \}_{i=0}^{N_\epsilon},
\end{align*} 
where
%
%
%
$N_\epsilon = |\mathcal{K}_{\epsilon} \cap \mathcal{K}_n|$ is the cardinality of $\calK_{\epsilon,n}$. 


\begin{figure}[tp]
	\centering
	\includegraphics[height=1.5in]{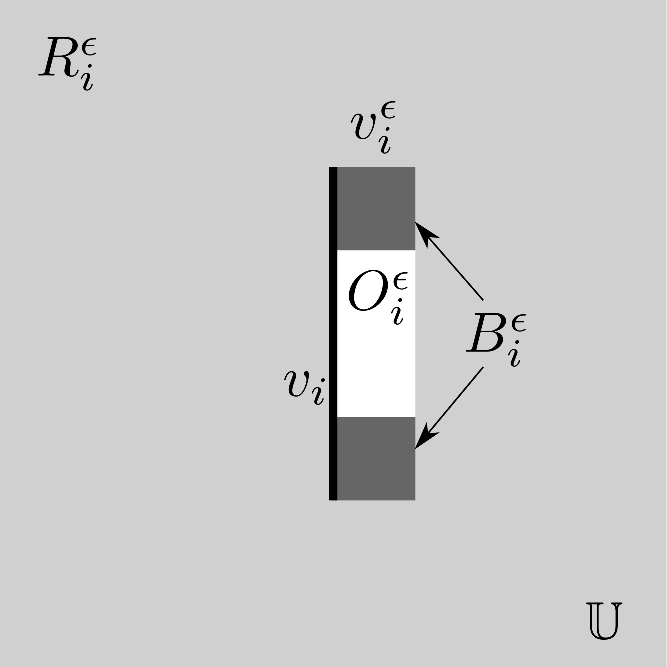}
	\quad
	\includegraphics[height=1.5in]{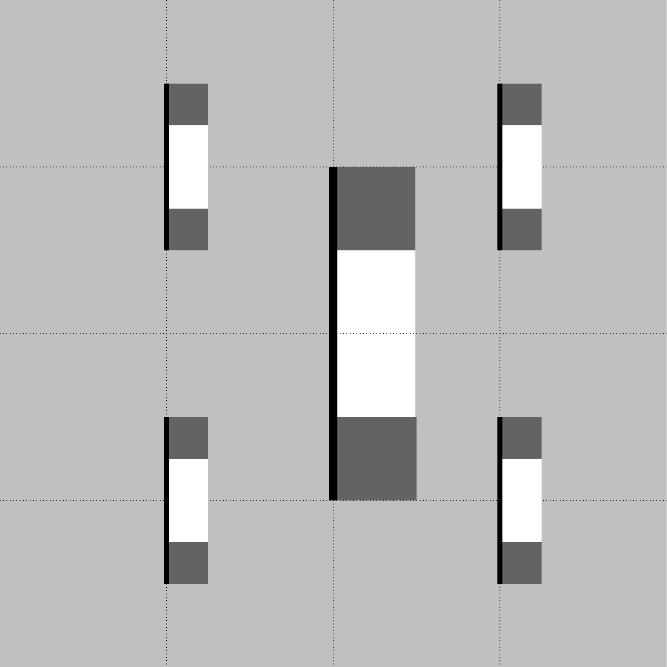} \quad
	\includegraphics[height=1.5in]{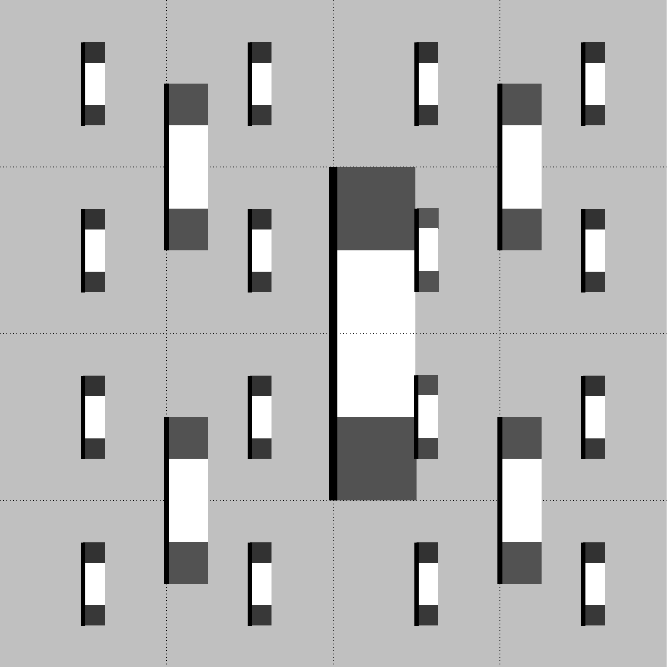}
	\caption{The white, dark grey and light grey regions in the picture on the left are the $\epsilon$-omitted, buffer, and residual subsets corresponding to a slit $v_i$ in the unit square $\mathbb{U}$. The second and third pictures show the sets $\mathcal{O}_n^{\epsilon}, \mathcal{B}_n^{\epsilon}$ and $\mathcal{R}_n^{\epsilon}$ (in white, dark grey and light grey, respectively) for the "standard" collection of slits corresponding to the sequence $r_i=1/2, i\geq0$. Here $\epsilon=1/4$, and $n=1,2$.  }
\end{figure}	

For $\epsilon$ as above, we denote by $B_i^\epsilon$,  the \emph{$\epsilon$-buffer} of the slit $v_i$, the union of the top and bottom squares in $v_i^\epsilon$. More precisely,
\[
B_i^\epsilon=\{x \in v_i^\epsilon : \mathrm{dist}(x,t(v_i^\epsilon)) \leq \epsilon l(v_i) \ \mathrm{or} \ \mathrm{dist}(x,b(v_i^\epsilon)) \leq \epsilon l(v_i)\}.
\]
The sets $O_i^\epsilon= v_i^\epsilon \backslash B_i^\epsilon$ and $R_i^\epsilon= \mathbb{U} \backslash v_i^\epsilon = \mathbb{U} \backslash (B_i^\epsilon \cup O_i^\epsilon)$ will be called the \emph{$\epsilon$-omitted and residual regions of $v_i$}, respectively. 
%

We also define the \emph{$\epsilon$-buffer, omitted and residual sets in $\mathbb{U}$}, denoting them by $\mathcal{B}_n^{\eps}, \mathcal{O}_n^\epsilon,\mathcal{R}_n^\epsilon$, respectively, as follows:
\begin{align}\label{notation:buffers}
\mathcal{B}_n^\epsilon &= \bigcup_{v_j \in \mathcal{K}_{\epsilon,n}} B^\epsilon_j,
\qquad 
\mathcal{O}_n^\epsilon = \bigcup_{v_j \in \mathcal{K}_{\epsilon,n}} O^\epsilon_j,
\qquad
\mathcal{R}_n^\epsilon = \mathbb{U} \backslash (\mathcal{B}_n^\epsilon \cup \mathcal{O}_n^\epsilon).
\end{align}

Note that the $O_{i}^{\epsilon}$ and $\mathcal{O}_n^{\epsilon}$ are both open sets, while $\mathbb{U}\setminus\mathcal{O}_n^{\epsilon}$ is a compact set for every $n\geq 1$.

Finally, we define a Borel function $\rho_n^\epsilon: \mathbb{U} \backslash K_n \to [0,\infty]$ and  weights $\{\rho_{n,j}\}=\{\rho_{n}(s_j)\}$ on $\mathcal{K}_n$ as follows:
\begin{align}\label{mass-distribution}
\begin{split}
\rho_n^\epsilon &
:= \chi_{\mathcal{B}_n^\epsilon \cup \mathcal{R}_n^\epsilon}= 
\chi_{\mathbb{U} \backslash \mathcal{O}_n^\epsilon} = 
\begin{cases}
0, &   \mbox{ on }  \mathcal{O}_n^{\epsilon},\\
1, &   \mbox{ on }  \mathcal{B}_n^{\epsilon} \cup \mathcal{R}_n^{\epsilon}.
\end{cases}
\\
\rho_{n,j} &:= \rho_n(s_j) =
\begin{cases}
\epsilon l(s_j), &   s_j\in \mathcal{K}_{\epsilon,n},\\
0, &   s_j\in \calK_n\setminus\mathcal{K}_{\epsilon,n}.
\end{cases}
\end{split}
\end{align}
%
where $\chi_{E}$ denotes the characteristic function of the set $E$, and let
\begin{align*}
\varrho^{\epsilon}_n = (\rho_n^\epsilon; \rho_{n,1}, \ldots, \rho_{n,N}),
\end{align*}
where $N=1+\ldots+4^n$ is the number of slits of generation at most $n$. In other words, $\rho_n^{\epsilon}$ vanishes on the omitted set and is equal to $1$ otherwise, while $\rho_{n,j}$ is equal to the width of the $\epsilon$-collar for each slit $s_j\in \mathcal{K}_{\epsilon,n}$ and is $0$ otherwise.

\subsubsection{Admissibility of $\varrho_{n}^{\epsilon}$ relative $\calK_n$.}

Next, we show that $\varrho^{\epsilon}_n$ is admissible for $\G$ relative $\calK_n$, i.e., the estimate
\begin{align}\label{trmod-admissible}
l_{\varrho_n^{\epsilon}}(\g) = \int_{\g} \rho_n^{\epsilon} ds + \sum_{\gamma \cap s_i \neq \emptyset} \rho_{n,i} \geq 1,
\end{align}
holds for every $\g\in\G$.

In \cite{Hakobyan} it was shown that if $\g\in\G$ does not intersect any of the slits of $\calK_n$ then $\rho_{n}^{\epsilon}$-length of $\g$ (i.e., $\int_{\g}\rho_n^{\epsilon}$) is at least $1$. The idea and the reason for defining the discrete weights $\rho_{n,j}$ as in (\ref{mass-distribution}), is to ensure that when a curve $\g\in\G$ intersects a slit $s_j\in\calK_n$, its ``horizontal-length" does not decrease too much. Indeed, if $\g$ intersects a slit $s_i$ the integral $\int_{\g}\rho_n^{\epsilon} ds$ may decrease by the amount equal to the width of the corresponding collar (or more), but the second term in $l_{\varrho_n^{\epsilon}}(\g)$ would increase by $\rho_{n,j}=\epsilon l(s_j)$, which is the ``width" of the collar of $s_{i}^{\eps}$. This balance implies that the $\varrho_n^{\epsilon}$-lengths of the curves stays bounded below by $1$. Next we provide the details of this argument.

To prove (\ref{trmod-admissible}) we will show that for every $\g\in\G$ there is a subset $\g'\subset\mathbb{U}$, which is not necessarily a curve, such that
\begin{align*}
l_{\varrho_n^{\epsilon}}(\g)\geq l_{\varrho_n^{\epsilon}}(\g') \mbox{ and }  \l_{\varrho_n^{\epsilon}}(\g')\geq 1.
\end{align*}


Pick a curve $\g\in\G$. Without loss of generality, we may assume that  $\gamma$ is oriented so that it starts at the left and ends at the right vertical edge of the unit square $\mathbb{U}$. Given two disjoint subsets $E$ and $F$ in $\mathbb{U}$, we say that \emph{$\gamma$ meets $E$ before $F$} if there exists $t\in(0,1)$ so that $\gamma(t) \in E$ and $\gamma(s) \notin F$ for any $s < t$ and \emph{$\gamma$ meets $E$ after $F$} if $\gamma$ meet $F$ before $E$. Before constructing $\g'$, we modify $\g$ inductively around every slit $v_i\in\calK_{\epsilon,n}$ as described next.
	
	
Denote $\g_{-1}:=\g$. For $0\leq i\leq N_{\epsilon}$, suppose the subsets $\g_0,\ldots,\g_{i-1}$ of $\mathbb{U}$ have been defined and define $\g_i$ as follows:
	\begin{itemize}
	\item[(a)] If $\gamma\cap v_i = \emptyset$, then
	\begin{align*}
	\gamma_i=\left\{
	\begin{array}{ll}
	\gamma_{i-1} & \mathrm{if} \ \gamma \cap O_i^\epsilon = \emptyset,
	\\
	(\gamma_{i-1} \backslash v_i^\epsilon) \cup (t(v_i^\epsilon)\setminus v_i) & \mathrm{if} \ \gamma \ \mathrm{meets} \ O_i^\epsilon \ \mathrm{before} \ r(v_i^\epsilon),
	\\
	\gamma_{i-1} \backslash O_i^\epsilon & \mathrm{if} \ \gamma \ \mathrm{meets} \ O_i^\epsilon \ \mathrm{after} \ r(v_i^\epsilon).
	\end{array}\right.
	\end{align*}
	\item[(b)] If  $\gamma \cap v_i \neq \emptyset$ then
		\begin{equation*}
	\gamma_i = (\gamma_{i-1} \backslash (v_i^\epsilon \cup v_i)) \cup (t(v_i^\epsilon)\setminus v_i),
	\end{equation*}
	\end{itemize}
where $t(v_i^{\epsilon})$ and $r(v_i^{\epsilon})$ as before denote the top and the right sides of the collar $v_i^{\epsilon}$, respectively. 
	
	This is a finite induction. Thus, we only construct $\gamma_i$ for $i= 0, \ldots, N_\epsilon$ and let $$\gamma' =\gamma_{N_\epsilon}.$$
 Note that $\gamma'\subset\mathcal{B}_n^\epsilon \cup \mathcal{R}_n^\epsilon$. Moreover, at every step of the construction above the curves are modified so that the projection of $\g_i$ to the $x$-axis is equal to the interval $[0,1]$ except for possibly finitely many dyadic points. Thus, we have  $\calH^1(\mathrm{proj}_x(\g_i))=1$, where $\mathrm{proj}_x$ denotes the projection onto the $x$-axis in the plane. By induction, we also have  $\calH^1(\mathrm{proj}_x(\gamma')) = 1$.  Therefore
	\begin{equation*}
	l_{\varrho_n^\epsilon}(\gamma') = \int_{\g'}{\rho_n^{\epsilon}}ds =\mathcal{H}^1(\gamma') \geq \mathcal{H}^1(\mathrm{proj}_x(\gamma')) = \mathcal{H}^1([0,1])=1,
	\end{equation*}
%
and it would be sufficient to prove that $l_{\varrho_n^\epsilon}(\gamma) \geq l_{\varrho_n^\epsilon}(\gamma')$. 	Since $\g=\g_{-1}$ and $\g'=\g_{N_{\epsilon}}$, it is enough to show that for every $0 \leq i \leq N_\epsilon$ we have
\begin{align}\label{lengths-decreasing}
l_{\varrho_n^\epsilon}(\gamma_{i-1}) \geq l_{\varrho_n^\epsilon}(\gamma_i).
\end{align}

\begin{figure}[tbp]
		\centering
		\includegraphics[height = 1.5in]{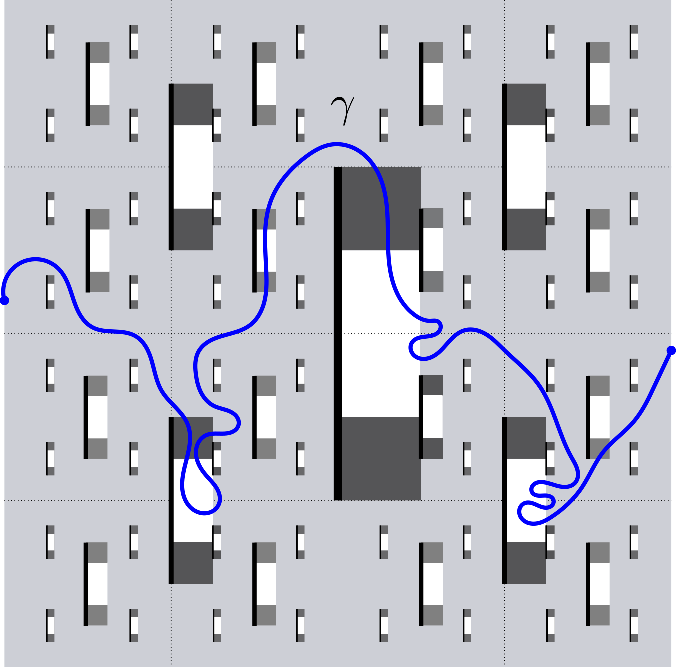}
		\includegraphics[height = 1.5in]{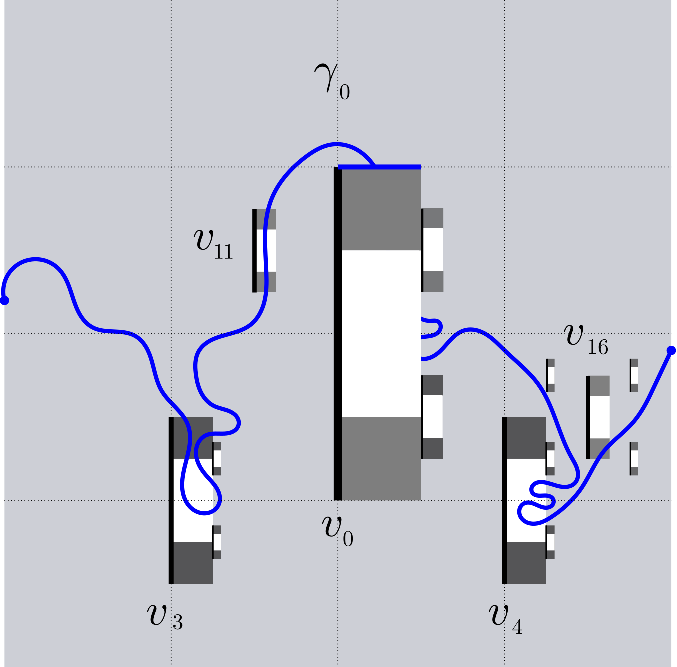}\label{newcurves}
			\includegraphics[height = 1.5in]{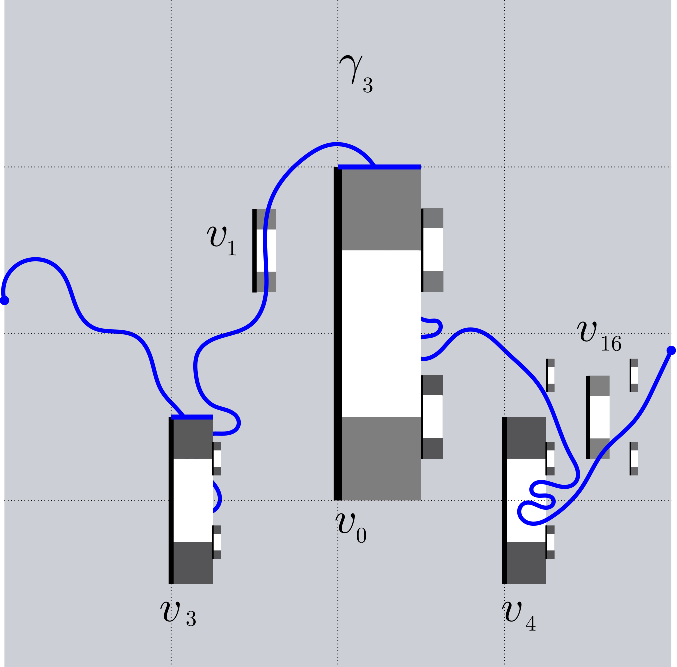}\quad
		\caption{A curve $\g=\g_{-1}$ and its modifications. Since $\g$ meets the slit $v_0$, we have $\g_{0}=\g\setminus v_0^{\epsilon} \cup (t(v_0^{\epsilon})\setminus v_0)$. Since $\g$ does not intersect the collars of $v_1$ and $v_2$, we have $\g_2=\g_1=\g_0$. Since $\g_2$ meets $O_3$ before $r_3$, we have  $\g_3=\g_2\setminus v_3^{\epsilon} \cup (t(v_3^{\epsilon})\setminus v_3)$.}\label{fig:modifications}
	\end{figure}

%
	
	By the definition of mass distribution $\varrho_n^{\epsilon}$ in (\ref{mass-distribution}), we have
	\begin{eqnarray}
		l_{\varrho_n^\epsilon}(\gamma_{i-1}) & = & \mathcal{H}^1(\gamma_{i-1} \cap \mathcal{R}_n^\epsilon) + \mathcal{H}^1(\gamma_{i-1} \cap \mathcal{B}_n^\epsilon) +\sum_{{\{j \,: \, \gamma_{i-1} \cap v_j \neq \emptyset\} } } \rho_{n,j} \nonumber
		\\
		& = &\mathcal{H}^1(\gamma_{i-1} \cap \mathcal{R}_n^\epsilon) + \sum_{j=0}^{N_\epsilon}\mathcal{H}^1(\gamma_{i-1} \cap B_j^\epsilon) +\sum_{\{j\, :\,  \gamma_{i-1} \cap v_j \neq \emptyset\} }  \rho_{n,j}\nonumber.
	\end{eqnarray}
Therefore, letting
\begin{align*}
\delta_{i,j}=
\begin{cases}
1, \mbox{ if } \g_i\cap v_j \neq \emptyset,\\
0, \mbox{ otherwise,}
\end{cases}
\end{align*}
we have
\begin{align}\label{length-equality}
l_{\varrho_n^\epsilon}(\gamma_{i-1})=\mathcal{H}^1(\gamma_{i-1} \cap \mathcal{R}_n^\epsilon) + \sum_{j=1}^{N_\epsilon}\left(\mathcal{H}^1(\gamma_{i-1} \cap B_j^\epsilon) +\delta_{i-1,j} \cdot \rho_{n,j}\right).
\end{align}
	
Since $\gamma_i$ is obtained by modifying $\g_{i-1}$ only within $\overline{(v_i^{\eps})}$, we have that the two curves coincide on the residual set $\mathcal{R}_n^{\eps}$ (note that $t(v_i^{\eps})$ is in the complement of  $\mathcal{R}_n^{\eps}$), and therefore 
\begin{align}\label{length-in-residual}
\calH^1(\gamma_{i-1} \cap \mathcal{R}_n^\epsilon) = \mathcal{H}^1(\gamma_i \cap \mathcal{R}_n^\epsilon),
\end{align}
and for every $j\in\{0,\ldots,N_{\epsilon}\}$ with $j\neq i$ we have
\begin{align}\label{length-buffers-equality}
\mathcal{H}^1(\gamma_{i-1} \cap B_j^\epsilon) +\delta_{i-1,j} \cdot \rho_{n,j} = \mathcal{H}^1(\gamma_{i} \cap B_j^\epsilon) +\delta_{i,j} \cdot \rho_{n,j}.
\end{align} 	
Therefore, by (\ref{length-equality}) and since $\delta_{i,i}=0$,  to prove (\ref{lengths-decreasing}) we only need to show the following estimate
\begin{align}\label{length-in-collar}
\mathcal{H}^1(\gamma_{i-1} \cap B_i^\epsilon) +\delta_{i-1,i} \cdot \rho_{n,i} 
\geq\mathcal{H}^1(\gamma_{i} \cap B_i^\epsilon).
\end{align} 	
Corresponding to the definition of $\varrho_n^{\epsilon}$ in (\ref{mass-distribution}), there are several cases to consider:
\begin{itemize}
\item[(a)] If $\gamma_{i-1}\cap v_i =\emptyset$, i.e., $\delta_{i-1,i}=0$, then three possibilities can occur:
\begin{itemize}
  \item[-] If $\g\cap O_i = \emptyset$ then $\g_{i-1}\cap B_i^{\epsilon} = \g_i\cap B_i^{\epsilon}$. In particular $\calH^1(\g_{i-1}\cap B_i^{\epsilon}) =
      \calH^1(\g_i\cap B_i^{\epsilon})$.
  \item[-] If $\g$ meets $O_i^{\epsilon}$ before $r(v_i^{\epsilon})$ then $\g_{i-1}$ connects the top and bottom of one component of an $\epsilon$-buffer and therefore $\calH^1(\g_{i-1}\cap B_i^{\epsilon}) \geq \epsilon l(v_i) = \calH^1(\g_i\cap B_i^{\epsilon}).$
  \item[-] If $\g$ meets $O_i^{\epsilon}$ after $r(s_i^{\epsilon})$ then $\calH^1(\g_{i-1}\cap B_i^{\epsilon}) = \calH^1(\g_i\cap B_i^{\epsilon})$.
\end{itemize}
\item[(b)] If $\gamma_{i-1}\cap v_i \neq \emptyset$ then
\begin{align*}
\mathcal{H}^1(\gamma_{i-1} \cap B_i^\epsilon) + \rho_{n,i}
\geq \rho_{n,i}=\epsilon l(s_i)= \calH^1(t(v_i^\epsilon)) = \mathcal{H}^1(\gamma_i \cap B_i^\epsilon).
\end{align*}
\end{itemize}
Thus (\ref{length-in-collar}) holds in all the cases. Combining (\ref{length-equality}),(\ref{length-in-residual}),(\ref{length-buffers-equality}) and (\ref{length-in-collar}) we obtain (\ref{lengths-decreasing}).
Therefore $l_{\varrho_n^\epsilon}(\gamma)\geq 1$ and $\varrho_n^{\epsilon}$ is admissible for $\G$ relative $\calK_n$.

\subsubsection{Estimating the mass of $\varrho_n^{\epsilon}$} To estimate $A(\varrho_n^\epsilon)$ note that
\begin{align*}
A(\varrho_n^{\epsilon})
& =
\int_{\mathcal{R}_n^{\epsilon} \cup \mathcal{B}_n^{\epsilon}} (\rho_n^\epsilon)^2 d\mathcal{H}^2 + \sum_{s_j\in\calK_n} \rho_{n,j}^2 \\
& =\calH^2 (\mathcal{R}_n^{\epsilon}) +\calH^2(\mathcal{B}_n^{\epsilon}) + \sum_{v_j \in \mathcal{K}_{\epsilon,n}} (\epsilon l(v_j))^2.
\end{align*}
Since $\epsilon l(v_j)$ is the side length of each of the buffer squares, we have that
$$\calH^2(B_j^{\epsilon})
= 2 (\epsilon l(v_j))^2=2\epsilon \calH^2(v_j^{\epsilon})$$ and therefore
\begin{align}\label{ineq:mass-estimate}
\begin{split}
A(\varrho_n^{\epsilon})
& =\calH^2 (\mathcal{R}_n^{\epsilon}) +(3/2)\calH^2(\mathcal{B}_n^{\epsilon}) 
= \calH^2 (\mathcal{R}_n^{\epsilon}) + 3 \epsilon \calH^2\left(\bigcup_{v_j\in \mathcal{K}_{\epsilon,n}} v_j^{\epsilon}\right)\\
&\leq \calH^2 (\mathcal{R}_n^{\epsilon}) + 3 \epsilon,
\end{split}
\end{align}
where the last inequality holds since $v_j^{\epsilon}$'s are pairwise disjoint and $\cup_j v_j^{\epsilon}\subset\mathbb{U}$.
%

To estimate $\mathcal{H}^2(\calR_n^{\epsilon})$, we first note that $\mathcal{H}^2(\mathcal{R}_0^{\epsilon})=1-\epsilon l(s_0) = 1-\epsilon r_0$. Next, assume  that for some $n\geq1$ we have $\calH^2(\mathcal{R}^{\epsilon}_{n-1}) \leq \prod_{i=1}^{n-1} (1- \epsilon r_i^2)$. From the definition of $\calR_n^{\epsilon}$ and the disjointness properties of the collars we have that
%
%
%
\begin{align*}
  \calR_n^{\epsilon} = [0,1]^2 \setminus  \bigcup_{v_j \in \calK_n'} v_j^{\epsilon} = [0,1]^2 \setminus  \bigcup_{s_i\in\calK_n} s_i^{\epsilon}.
\end{align*}
%
Next, we observe that if  $\D\in\calD_n$, $n>1$, then 
\begin{align}\label{equality:measure-of-nsss}
  \calR^{\epsilon}_n \cap \D  =(\calR^{\epsilon}_{n-1} \cap \D) \setminus s^{\epsilon}(\D),
\end{align}
where $s(\D)$ is the slit corresponding to $\D$.
Indeed, as noted above either $s^{\epsilon}(\D)$ is contained in a previously removed collar, or it does not intersect any such collar.
If $s^{\epsilon}(\D)$ is contained in a previously removed collar then by (the proof of) Lemma \ref{lemma:disjoint}, the dyadic square $\D$ is also in the complement of $\calR_{n-1}^{\epsilon}$ and both sides of (\ref{equality:measure-of-nsss}) are empty. On the other hand if $s^{\epsilon}(\D)\cap\calR_{n-1}^{\eps}\neq \emptyset$ then $s^{\epsilon}(\D)\subset\calR_{n-1}^{\epsilon}$ (again by Lemma \ref{lemma:disjoint}) and (\ref{equality:measure-of-nsss}) follows from the definition of $\calR_n^{\epsilon}$.  
%
%

From (\ref{equality:measure-of-nsss}) we have that if $\D\in\calD_n$ is such that $\mathcal{R}_{n-1}^{\epsilon}\cap \D\neq \emptyset$  then
\begin{align*}
  \calH^2(\calR^{\epsilon}_n \cap \D)=\calH^2(\calR^{\epsilon}_{n-1} \cap \D)-\calH^2(s^{\epsilon}(\D)).
\end{align*}
But
\begin{align*}
  \calH^2(s^{\epsilon}(\D))=\epsilon l(s(\D))^2 = \epsilon \left(\frac{r_n}{2^n}\right)^2 = \epsilon r_n^2 \calH^2(\D) \geq \epsilon r_n^2 \calH^2(\calR^{\epsilon}_{n-1} \cap \D),
\end{align*}
and therefore if $s(\D)$, or equivalently $\Delta$, intersects $ \calR_{n-1}^{\epsilon}$ then we have
\begin{align}\label{ineq:residual}
  \calH^2(\calR^{\epsilon}_n \cap \D)\leq(1-\eps r_n^2) \calH^2(\calR^{\epsilon}_{n-1}\cap \D).
\end{align}
Moreover, if $\D\cap \calR_{n-1}^{\epsilon} = \emptyset$ then both sides in (\ref{equality:measure-of-nsss}) are empty and (\ref{ineq:residual}) still holds with both sides being $0$. Summing (\ref{ineq:residual}) over all dyadic cubes of generation $n$ we obtain $$\calH^2(\calR^{\epsilon}_n)\leq(1-\epsilon r_n^2) \calH^2(\calR^{\epsilon}_{n-1}).$$
By induction hypothesis we have
$\calH^2(\calR^{\epsilon}_k)\leq\prod_{i=0}^k(1-\eps r_i^2),$
and therefore by (\ref{ineq:mass-estimate}) we obtain
\begin{align*}
A(\varrho_n^\epsilon) \leq \prod_{i=0}^{n}(1-\epsilon r_i^2)+ 3\epsilon.
\end{align*}
Since $\varrho_n^{\epsilon}$ is admissible for $\G$ relative $\mathcal{K}_n$ we obtain (a stronger version of) inequality (\ref{trmod}) in the case when $r_i$'s and $\epsilon$ are powers of $2$.

To prove (\ref{trmod}) in general, assume $r_i, i\geq 0,$ and $\epsilon$ are arbitrary numbers in $(0,1)$. Then there are integers $j_i\geq 1$ and $m\geq1$ such that  $2^{-j_i}\leq r_i < 2^{-j_i+1}$ and $2^{-m}\leq \epsilon < 2^{-m+1}$. Let $\epsilon'=2^{-m}$, $r_i' = 2^{-j_i}$, and let $\mathcal{K}'_n, n=0,1,\ldots,$ be the families of dyadic slits $\{s'(\Delta)\}_{\Delta\in\mathcal{D}}$, corresponding to the sequence $\{r_i'\}_{i=0}^{\infty}$, cf. Section \ref{Section:slit-carpets}. Defining the omitted, residual and buffer sets  for $\mathcal{K}_n'$ as before, we let $(\varrho_n^{\epsilon})'=((\rho_n^{\epsilon})';\rho_{n,1}',\ldots,\rho_{n,N}')$ be the mass distribution corresponding to $\mathcal{K}_n'$ defined as in (\ref{mass-distribution}). In particular, the weight corresponding to a slit $s'=s'(\Delta)\in\mathcal{K}'_n$ is either $0$ or is given by $\rho_{n,i}'=\epsilon' l(s')$. 

Next, define a new mass distribution $\zeta_{n}^{\epsilon}=(\sigma_n^{\epsilon};\{\sigma_{n,i}\})$ relative $\mathcal{K}_n$ by setting $\sigma_{n,i}$, i.e., the weight of $s_i$, to be the same as the weight of $s_i'$, and by letting $\sigma_{n}^{\epsilon}$ to be the restriction of $(\rho_{n}^{\epsilon})'$ to $\mathbb{U}\setminus K_n$. Just like above, one may see that $\zeta_n^{\epsilon}$ is admissible for $\G$ relative $\mathcal{K}_n$. Therefore,
\begin{align*}
\Mod_{\mathbb{U},\mathcal{K}_n} (\G)
\leq A((\varrho_n^{\epsilon})')
\leq \prod_{i=0}^{n}(1-\epsilon' (r_i')^2)+ 3\epsilon'.
\end{align*}
Since $\epsilon r_i^2 \leq 2\epsilon' (2r'_i)^2$ and $\epsilon'\leq \epsilon$, the last inequality implies (\ref{trmod}) in general.
%
%
%
%
%

Finally, if $\mathbf{r}\notin \ell^2$ then the first term in the right hand side of (\ref{trmod}) approaches $0$ as $n\to\infty$.  Therefore, for every $\epsilon>0$ we have that $$\limsup_{n\to\infty}\Mod_{\mathbb{U},\calK_n} (\G)\leq 3\epsilon,$$ which implies (\ref{limit:trmod0}) and completes the proof.
\end{proof}

\section{Embeddings of Slit Carpets}
\label{Section:embedding}

In this section, we prove the ``if" direction in Theorem \ref{thm:main}.
\begin{theorem}\label{thm:embedding}
If $\mathbf{r}=\{r_i\}_{i=0}^{\infty} \in \ell^2$ then there is a quasisymmetric embedding $F:\mathscr{S}=\mathscr{S}_{\mathbf{r}}\hookrightarrow\mathbb{R}^2$.
\end{theorem}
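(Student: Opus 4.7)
The strategy will be to invert the argument of Section~\ref{Section:necessary-condition}. There, the hypothetical QS embedding was used (via Bonk's cylinder uniformization) to force a positive lower bound on a certain transboundary modulus; here we shall establish that lower bound directly from the $\ell^2$ assumption, feed it back through Bonk's uniformization to obtain uniformly quasisymmetric realizations of the precarpets $\mathscr{S}_n$ in the plane, and pass to the limit. Since $\mathbf{r}\in\ell^2$ forces $r_i\to0$, after separately treating the finitely many terms with $r_i$ close to $1$ we may assume $r_i\leq r<1$ for all $i$; then by Lemma~\ref{unirelseparated} and the Ahlfors-regularity/doubling statement of Lemma~\ref{slitmeasure}, each $\mathscr{S}_n$ has peripheral circles that are $s$-relatively separated $k$-quasicircles, and is Ahlfors $2$-regular and $N$-doubling, with $s,k,N$ independent of $n$.

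\textbf{Uniformization and height bound.} Realize $\mathscr{S}_n$ as a planar domain $\Omega_n$ with Jordan holes (for instance via its conformal model as a circle domain, whose peripheral geometry is controlled by the intrinsic metric) and apply Bonk's cylinder uniformization (Theorem~\ref{uniformization}) to obtain an $\eta$-quasisymmetric homeomorphism
\[
\phi_n\colon\mathscr{S}_n\longrightarrow\overline{A_n\setminus(Q_{n,1}\cup\cdots\cup Q_{n,m_n-1})},
\]
sending the central slit to $\partial_i A_n$ and the outer square to $\partial_o A_n$, with $\eta$ independent of $n$. The key analytic step is a uniform upper bound on the cylinder height $h_{A_n}=\log(R_n/r_n)$, which by Proposition~\ref{cylindertrmod} together with the quasiconformal quasi-invariance of transboundary modulus (Theorem~\ref{quasicomparable}) is equivalent to a uniform positive lower bound
\[
\Mod_{\mathbb{U}_0,\calK_n^0}\bigl(\Gamma(s_0,\delta_0;\mathbb{U}_0)\bigr)\;\geq\;c(\mathbf{r})\;>\;0,
\]
valid for every $n$. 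This is the converse of Lemma~\ref{lemma:transmod=0} and is precisely where $\ell^2$ enters: when $\sum r_i^2<\infty$ one aims to exhibit an admissible transboundary mass distribution---a constant bulk density on $\mathbb{U}_0\setminus K_n^0$ combined with discrete weights on each slit proportional to its size---whose total $2$-mass is controlled by $1+\sum_i r_i^2$.

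\textbf{Passage to the limit, and main obstacle.} After normalizing by a M\"obius transformation so that each $A_n\subset\{\rho\leq|z|\leq 1\}$ for a fixed $\rho>0$, the compositions $\Phi_n=\phi_n\circ\pi_n\colon\mathscr{S}\to\mathbb{C}$ take values in a common compact set. By Lemma~\ref{lemma:-bilip-squares} the projections $\pi_n$ are $3$-bi-Lipschitz on the dyadic grid $\tilde\Pi_n$, so the restrictions $\Phi_n|_{\tilde\Pi_n}$ are uniformly $\eta'$-quasisymmetric (in the spirit of Lemma~\ref{lemma:qc-extension}(b)). Since $\bigcup_n\tilde\Pi_n$ is dense in the doubling space $\mathscr{S}$, Arzel\`a--Ascoli yields a subsequential uniform limit $F\colon\mathscr{S}\to\mathbb{C}\cong\mathbb{R}^2$ that is $\eta'$-quasisymmetric on the dense grid and extends by continuity to a quasisymmetric embedding of all of $\mathscr{S}_{\mathbf{r}}$. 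The principal technical hurdle is the transboundary modulus lower bound: verifying admissibility of the proposed mass distribution requires a careful accounting of how a curve from $s_0$ to $\delta_0$ must ``pay'' either Euclidean length outside the slit network or discrete weights upon crossing a slit, in a manner dual to the buffer-and-omitted-set construction of Lemma~\ref{lemma:transmod=0}. A secondary issue is the choice of planar realization $\Omega_n$ compatible across $n$; this can be sidestepped by working entirely with the intrinsic metric $d_{\mathscr{S}_n}$ and checking that the hypotheses of Theorem~\ref{uniformization} hold with $n$-independent constants, as provided by Lemmas~\ref{unirelseparated} and~\ref{slitmeasure}.
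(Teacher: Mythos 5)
Your plan is a genuinely different route from the paper's (the paper never uniformizes the precarpets onto cylinders in the sufficiency direction; instead it glues a ``pillowcase'' disk onto each peripheral circle of the doubled carpet $D\mathscr{S}$ to build a metric sphere $\mathscr{D}$, shows $\mathscr{D}$ is LLC and Ahlfors $2$-regular --- this is exactly where $\sum_i r_i^2<\infty$ enters, in the area estimate $\calH^2(\tilde T_\D)\lesssim 4^{-n}(C+\sum_{k\ge n}r_k^2)$ --- and then invokes the Bonk--Kleiner sphere uniformization theorem). However, as written your argument has two genuine gaps. First, the ``key analytic step'' is backwards: to bound the cylinder height $h_{A_n}$ from above you need a \emph{lower} bound on $\Mod_{\mathbb{U}_0,\calK_n^0}(\Gamma)$, but exhibiting one admissible transboundary mass distribution with total mass $\lesssim 1+\sum_i r_i^2$ only yields an \emph{upper} bound on the modulus. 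A lower bound requires showing that \emph{every} admissible mass distribution has mass at least $c>0$ (or a duality/reciprocal-family argument), and no such argument is sketched; this is the heart of the matter and is left unproved.

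Second, the application of Theorem \ref{uniformization} is circular. That theorem takes as input a configuration of Jordan domains in $\mathbb{C}$ whose boundaries are $s$-relatively separated $k$-quasicircles \emph{in the Euclidean metric}; in Section \ref{Section:necessary-condition} such a configuration $\Omega_n=f_n(\mathscr{S}_n)$ exists only because the hypothesized embedding $f$ supplies it. In the sufficiency direction there is no such planar realization to start from: the slits of $S_n\subset\mathbb{U}$ are degenerate intervals, not Jordan domains, and ``opening them up'' with uniform control over $n$ is essentially the statement being proved. Declaring that one can ``work entirely with the intrinsic metric $d_{\mathscr{S}_n}$'' is not licensed by Theorem \ref{uniformization} as stated, and using the Koebe circle-domain model would require proving uniform (in $n$) separation and quasicircle constants for the resulting circles in the plane, which is again a planar embedding statement. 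Relatedly, even granting a planar realization, a bound on $h_{A_n}$ controls only one conformal invariant; the uniform quasisymmetry of $\phi_n$ in Bonk's theorem comes from the hypotheses on $\Omega_n$, not from the height bound. Your final Arzel\`a--Ascoli step is sound in outline (given uniformly quasisymmetric $\Phi_n|_{\tilde\Pi_n}$ on the dense, nested grids and a three-point normalization to prevent degeneration), but everything upstream of it is missing. I would encourage you to compare with the paper's filling-and-uniformization argument, which avoids both difficulties by never needing an a priori planar model of $\mathscr{S}_n$.
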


The idea is to show that there is a metric $2$-sphere $\widehat{\mathscr{S}}$ which contains $\mathscr{S}$ and is quasisymmetric to the standard sphere $\mathbb{S}^2$. The surface $\widehat{\mathscr{S}}$ will be obtained by ``gluing in" topological disks along the peripheral circles of the slit carpet $\mathscr{S}$. We will then use Bonk and Kleiner's uniformization theorem, cf. \cite{Bonk Kleiner}, to show that $\widehat{\mathscr{S}}$ is quasisymmetric to $\mathbb{S}^2$.

\subsection{Pillowcases}
For $l\in(0,1)$  consider the rectangle
$R = R(l)= [-l,l]\times[0,l]$.
Define an equivalence relation on $\partial R$  by identifying $(x,0)$ with $(-x,0)$, and $(x,1)$ with $ (-x,1)$
for $x\in[0,l]$.
%
The quotient space 
\begin{equation}\label{spillow}
\mathscr{P}=\mathscr{P}(l) = R(l) / \sim
\end{equation}
can be thought of as a ``square pillowcase" with an open ``mouth", which corresponds to the vertical sides of the rectangle $R$. For this reason we will call $\mathscr{P}$ a  \textit{square pillowcase of sidelength $l$}. The image of  a point $z \in R$ in $\mathscr{P}$ under the quotient map will be denoted by $[z]$.
We will also use the following notations:
\begin{eqnarray*}
T(\mathscr{P}) &=& \{[(0,t)]:0 \leq t \leq l\},
	\\
	L(\mathscr{P}) &=& \{[(t,0)]:0 \leq t \leq l\},
	\\
	U(\mathscr{P}) &=& \{[(t,l	)]:0 \leq t \leq l\},
\end{eqnarray*}
and will call these sets the \textit{top, lower and upper edges of $\mathscr{P}$}, respectively. Clearly, $\mathscr{P}$ is a topological disk and $\partial \mathscr{P}$ is a topological circle corresponding to the vertical sides of $R$.

As a metric space, $\mathscr{P}$ is equipped with the quotient of the Euclidean metric on $R$, cf. \cite[Section 3.1]{Burago}.

\subsection{The ``pillowcase" surface.}\label{Section:gluing} Next we show how one can glue a pillowcase to a slit of the slit carpet $\S$.
Suppose $s\subset \S$ is a slit such that $\pi(s) = \{x\} \times [a,a+l] \subset \mathrm{int}({\mathbb{U}})$. Given a point $z=(x,a+t)\in\pi(s)$ we will denote by $p_z^+$ or $p_z^-$ the preimages of $z$ in $\S$ which are closer to the right or left sides of the outer square of $\S$, respectively. Note that for the endpoints of the slit $s$, i.e., for $z=(x,a)$ and $z=(x,a+l)$ we have $p_z^{+}=p_z^-$.

Next, for a slit of length $l$ consider the mapping  
\begin{align}\label{gluefunction}
\begin{split}
g(s): \partial\mathscr{P}(l)& \to s\\
[(l,t)] &\mapsto p_{(x,a+t)}^+, \\
[(-l,t)] &\mapsto p_{(x,a+t)}^-.
\end{split}
\end{align}
Clearly $g(s)$ is a homeomorphism and is an isometry when $\mathscr{P}(l)$ is equipped with the quotient metric and $s$ with the restriction of the metric in $\S$. 

Given a slit carpet $\S$ we define the double $D\S$ of $\S$ by taking two copies of $\S$ and identifying them along the outer square, i.e., denoting by $\S_1$ and $\S_2$ the two copies of $\S$ we have
\begin{align*}
D\S = \S_1 \sqcup \S_2/\sim, 
\end{align*}
where $p_1\in\S_1$ is equivalent to $p_2\in\S_2$ if they correspond to the same point on $\partial{\mathbb{U}}$. It follows from Whyburn's theorem that as a topological space $D\S$ is homeomorphic to the Sierpi\'nski carpet.  Moreover, the path metric $d_{\S}$ naturally induces a quotient metric on $D\S$, which we will denote by $d_{D\S}$.

Let $D\mathcal{K}$ denote the collection of all slits in $D\mathscr{S}$, and  let $D\mathcal{K} = \{s_j\}_{j=0}^\infty$ be an enumeration of the slits. To each slit $s_j$ in $D\mathcal{K}$ we assign a pillowcase $\mathscr{P}_j$ of sidelength equal to $\diam(s_j)=l(s_j)$ and a gluing function $g_j=g(s_j):\partial\mathscr{P}_j\to s_j$ as defined in (\ref{gluefunction}). 


Thus, for every slit carpet we may define the  topological space $\widehat{\mathscr{S}}$ as follows. Consider the quotient space 
\begin{equation}
{\widehat{\mathscr{S}}}= \left(D\mathscr{S} \sqcup (\cup_{j=0}^{\infty} \mathscr{P}_j) \right) / \sim,
\end{equation}
obtained by gluing the pillowcase $\mathscr{P}_j$ to $D\S$ via $g_j$, i.e., for $j\geq0$, if $x\in \partial\mathscr{P}_j$ then we have that $x\sim g_j(x)$. Thus, we cover every slit with a square pillowcase by gluing its boundary with the corresponding slit isometrically.

Note that $\widehat{\mathscr{S}}$ is homeomorphic to $\mathbb{S}^2$ since every $\mathscr{P}_i$ is a topological disk and $D\mathscr{S}$ is homeomorphic to $\mathbb{S}_{1/3}$ by Whyburn's Theorem \ref{metricsierpinski}. 


The space $\widehat{\mathscr{S}}$ can be equipped with a natural  metric studied by Ha\"issinsky in  \cite{Haisinsky}. First, define a quasimetric $\tau$ on $\widehat{\mathscr{S}}$ by setting
\begin{align*}
\begin{split}
\tau(p,q) = 
\begin{cases}
d_{D\S}(p,q), & \mbox{ if } p,q\in D\S,\\
d_i(p,q), & \mbox{ if } p,q\in \mathscr{P}_i, i\geq 1,\\
\inf_{\zeta\in s_i} \{d_{D\S}(p,\zeta) + d_i(\zeta,q)\}, & \mbox{ if } p\in D\S, q \in \mathscr{P}_i, \\
\inf_{\zeta\in s_i, \xi\in s_j} \{d_i(p,\zeta) + d_{D\S}(\zeta,\xi)+d_j(\xi,q)\}, & \mbox{ if } p\in \mathscr{P}_i, q \in \mathscr{P}_j, i\neq j, \\
\end{cases}
\end{split}
\end{align*}
where $d_i, i\geq 1,$ denotes the metric on $\mathscr{P}_i$.
Furthermore, for $p,q\in\widehat{\mathscr{S}}$ let 
\begin{align}
d_{\widehat{\mathscr{S}}}(p,q) = \inf \sum_{k=0}^{N-1} \tau(\zeta_k,\zeta_{k+1}),
\end{align}
where the infimum is taken over all finite chains $\zeta_0,\ldots,\zeta_N$  in $\widehat{\mathscr{S}}$ such that $\zeta_0=p$, $\zeta_N=q$. By Theorem 2.2 in \cite{Haisinsky}, $d_{\widehat{\mathscr{S}}}$ is a metric provided the mappings $g_i$ are uniformly quasisymmetric and $\diam_{d_i}\mathscr{P}_i \leq C \diam_{d_i} \partial \mathscr{P}_i$, for all $i\geq 1$. Since in our case the mappings $g_i$ are all isometries, and the inequality above holds with $C=\sqrt{2}$, it follows that $d_{\widehat{\mathscr{S}}}$ is indeed a metric. Moreover, by \cite{Haisinsky} the restriction of $d_{\widehat{\mathscr{S}}}$ to the slit carpet $\S\subset\widehat{\mathscr{S}}$ is comparable to $\tau$, or equivalently, is bi-Lipschitz to $d_{\S}$. Therefore, to show that $(\S,d_{{\mathscr{S}}})$ quasisymmetrically embeds into the plane (or $\mathbb{S}^2$) it is enough to show that $\widehat{\mathscr{S}}$ is quasisymmetric to $\mathbb{S}^2$.
For this we will need the following well known uniformization result of Bonk and Kleiner.
\begin{theorem}[Bonk, Kleiner, \cite{Bonk Kleiner}]\label{quasisphere}
	Let $X$ be an Ahlfors $2$-regular compact connected metric space homeomorphic to $\mathbb{S}^2$. Then $X$ is quasisymmetric to $\mathbb{S}^2$ if and only if $X$ is linearly locally connected.
\end{theorem}

%




Recall that a metric space $(X, d)$ is called \emph{linearly locally connected (or LLC)} if there exists a constant $\lambda \geq 1$ so that for every $z\in X$ and $r>0$ the following conditions hold:
\begin{itemize}
\item[($LLC_1$)] If $x,y \in B(z, r)$, then there exists
a continuum $E \subset B(z, \lambda r)$ containing $x$ and $y$.
\item[($LLC_2$)] If $x,y \notin B(z, r)$, then there exists
a continuum $E \subset X \backslash B(z, r/\lambda)$ containing $x$ and $y$.
\end{itemize}

Thus, by Theorem \ref{quasisphere}, to complete the proof we need to show that $\widehat{\mathscr{S}}$ is LLC and Ahlfors $2$-regular.

\begin{remark}\label{remark:BK}
Theorem \ref{quasisphere} is quantitative in the sense that the quasisymmetric mapping $f:X\to\mathbb{S}^2$ is $\eta$-quasisymmetric with $\eta$ depending only on the  Ahlfors regularity and LLC constants for $X$.
\end{remark}

By \cite[Theorem 2.6.2]{Haisinsky} the metric sphere $\widehat{\mathscr{S}}$ is LLC provided $D\S$ and all $\mathscr{P}_i$, $i\geq 1$ are uniformly LLC. Since $\mathscr{P}_i$'s are all uniformly LLC (with $\lambda=1$) it is enough to show that $D\S$ is LLC.

\begin{lemma}\label{lemma:llc}
The double $D\S$ of the slit carpet $\S=\S_{\mathbf{r}}$ is $LLC$.
\end{lemma}
\begin{proof}
Note that if $x\in B(z,r)$  and $\g_{xz}$ denotes a length minimizing curve connecting $x$ and $z$, then for every $p\in\g_{xz}$ we have $d_{D\S} (z,p)\leq d_{D\S}(z,x)$ and therefore $\g_{xz}\subset B(z,r)$. Therefore if $x,y\in B(z,r)$ then $\g_{zx}\cup\g_{zy}\subset B(x,r)$ is a continuum connecting $x$ and $y$. Therefore $D\S$ is $LLC_1$ with $\lambda=1$.

To show that $D\S$ is $LLC_2$ let $x,y\in D\S\setminus B(z,r)$, where  $2^{-n-1}\leq r < 2^{-n}$. Let
\begin{align*}
T' = \bigcup_{\substack{\D\in \mathcal{D}_{n+3} \\ T_{\D}\cap B(z,2^{-(n+3)}) \neq \emptyset}} T_{\D}
\end{align*}
where, as before $T_{\D}=\overline{\pi^{-1}(\mathrm{int}(\D))}$  denotes a ``dyadic square" in $\S$ corresponding to  some dyadic square $\D\subset\mathbb{U}$. Note that, since $\diam_{\S}{T_{\D}} \leq 2\cdot 2^{-n-3}$ for $\D\in\mathcal{D}_{n+3}$, we have that for every $p\in T'$ the following inequalities hold:
\begin{align*}
d_{D\S}(p,z)\leq 2^{-(n+3)}+\diam{T_{\D}} \leq 3 \cdot 2^{-(n+3)}\leq\frac{3}{4}r.
\end{align*}
Therefore, $$B\left(z,\frac{r}{8}\right)\subset B\left(z,\frac{1}{2^{n+3}}\right) \subset T'\subset B\left(z,\frac{3}{4}r\right).$$ Finally, since $x,y \in D\S\setminus \partial T'$ there is a continuum connecting $x$ and $y$ without intersecting $B(z,r/8)$. Indeed, if $x$ and $y$ belong to the same ``dyadic" square $T_{\D}$ for some $\D\in \mathcal{D}_{n+3}$ then there is a curve $\g_{xy}\subset T_{\D}$ connecting $x$ and $y$, since $T_{\D}$ is path connected. On the other hand, if $x\in T_{\D}$ and $y\in T_{\D'}$, we can first connect $x$ and $y$ to the ``outer squares" of $T_{\D}$ and $T_{\D'}$, respectively, and then we may connect these outer squares to each other through the preimages of the grids $\tilde{\Pi}_{n+3}$, cf. Section \ref{Section:necessary-condition}, without intersecting $\mathrm{int}(T')$. This gives a continuum $\g_{x,y}\subset D\S\setminus \mathrm{int}(T')$ connecting $x$ and $y$ in general. Therefore $\g_{xy}\subset D\S \setminus B(z,r/8)$ and $D\S$ is $LLC_2$.
%
%
%
%
%
%
%
\end{proof}

\begin{lemma}\label{lemma:Ahlfors-reg}
If $\mathbf{r}\in\ell^2$ then $\widehat{\mathscr{S}}$ is Ahlfors
$2$-regular.
\end{lemma}

\begin{proof}
Note that it is enough to show that the space 
$\mathscr{D}=\S\sqcup(\cup_{s_j\subset\S} \mathscr{P}(s_j))/\sim$ is Ahlfors regular. Indeed, $\widehat{\mathscr{S}}$ can be obtained by gluing two copies of $\mathscr{D}$ along the outer square by the identity, and therefore if $\mathscr D$ is Ahlfors 2-regular with constant $C$ then $\widehat{\mathscr{S}}$ is Ahlfors regular with  constant $2C$. 

Below we use the same notation $T=T_{\D}\subset\S$ as above for the dyadic squares in $\S$. Moreover, for a dyadic square $\D\in\mathcal{D}_n$ in $\mathbb{U}$ we let $\tilde{T}=\tilde{T}_{\D}$ denote the portion of $\mathscr D$ ``over" $T$, i.e.,
\begin{align*}
\tilde{T}:=\tilde{T}_{\D} = (T_{\D}\sqcup \bigcup_{s_j\subset T_{\D}} {\mathscr P(s_j)})/\sim,
\end{align*}
where $\sim$ is the same ``gluing" equivalence relation as before.

Next, suppose $\D$ is a dyadic square of generation $n\geq 1$.
Then, by Lemma  \ref{slitmeasure}, there is a constant $C\geq 1$  which does not depend on $n$, so that the following inequalities hold:
\begin{align}\label{est:measure1}
\begin{split}
\calH^2(\tilde{T}_{\D}) & = \calH^2 (T_{\D}) + \sum_{s_j\subset T_{\D}} \calH^2(\mathscr{P}(s_j)) \\
& \leq C(2^{-n})^2 + \sum_{k\geq n}\,\left[ \sum_{\substack{s(\D')\subset T_{\D} \\ \D'\in\mathcal{D}_k}} 2 l(s(\D'))^2\right].
\end{split}
\end{align}
The number of generation $k\geq n$ slits (or equivalently dyadic subsquares) contained in $\D$ is equal to $4^{k-n}$. Therefore, since $l(s(\D'))=r_k 2^{-k}$ for $\D'\in \mathcal{D}_k$, the following equality holds for every $k\geq n$:
\begin{align}\label{est:measure2}
\sum_{\substack{s(\D')\subset T_{\D} \\ s_j\subset\mathcal{D}_k}} l(s(\D'))^2 = 
4^{k-n} (r_k2^{-k})^2.
\end{align}
%
%
Hence, combining (\ref{est:measure1}) and (\ref{est:measure2}) we obtain
\begin{align*}
\calH^2(\tilde{T}_{\D})
& \leq C 4^{-n} + \sum_{k\geq n} 4^{k-n} (r_k^2 4^{-k})
 = 4^{-n} (C + \sum_{k\geq n} r_k^2).
\end{align*}
Since $2^{-n}\leq \diam T_{\D} \leq 2^{-n+1}$ we obtain that for every $\D\in\mathcal D$ the following inequalities hold:
\begin{align*}
\frac{1}{4C}(\diam T_{\D})^2\leq \calH^2(\tilde{T}_{\D}) \leq C_1 (\diam T_{\D})^2,
\end{align*}
where $C_1=C+\sum_{k=1}^{\infty} r_i^2,$ with $C$ being the constant from Lemma \ref{slitmeasure}.

Now, if $x\in\S$ and $2^{-n-1}\leq r < 2^{-n}$ then considering a dyadic square $T_{\D}$ for some $\D\in \mathcal{D}_{n+3}$  such that $B(x,r/8)\cap T_{\D}\neq \emptyset$, we have (like in Lemma \ref{lemma:llc}) $T_{\D}\subset B(x,r)$ and 
\begin{align}\label{est:ahlfors-lower-inS}
\calH^2(B(x,r)) \geq \calH^2(T_{\D}) \geq \frac{1}{4C}(\diam T_{\D})^2 \geq \frac{1}{4C}\left(\frac{r}{2^3}\right)^2 = \frac{r^2}{2^8C}.
\end{align}
On the other hand, since $\pi(B(x,r))\subset B(\pi(x),r)$, there are at most $9$ dyadic squares of generation $n$ intersecting $B(\pi(x),r)$ such that their union is a Euclidean square in $\mathbb{U}$. It follows that there are at most $9$ dyadic squares $\D_1,\ldots,\D_9\in \mathcal{D}_{n}$ such that $B(x,r) \cap \tilde{T}_{\D_i}\neq \emptyset, i=1,\ldots 9.$ Let 
$$\tilde T=\cup_{i=1}^{9}\tilde{T}_{\D_i}.$$ Then, we have
\begin{align}\label{est:ahlfors1}
\begin{split}
\calH^2(B(x,r)\cap \tilde T)& \leq \sum_{i=1}^9 \calH^2 (\tilde{T}_{\D_i}) \leq 9 C_1 (\diam(\tilde T_{\D_i}))^2 \leq 9 C_1  (2\cdot 2^{-n})^2\\
& \leq 9\cdot 2^4C_1 r^2. 
\end{split}
\end{align}
Next, if $y\in B(x,r)\setminus \tilde T$ then $y$ belongs to a pillowcase $\mathscr{P}(s_j)$ over a slit $s_j$ of generation $\leq n-1$, thus $l(s_j)\geq 2^{-n+1} > r$. Note that if $z\in\partial \mathscr P(s_j)$ is the closest point in $\mathscr P(s_j)$ to $x\in\S$, we have that $\mathscr P(s_j)\cap B(x,r)$ is contained in $\mathscr P(s_j)\cap B(z,r)$. Therefore
\begin{align*}
\calH^2(B(x,r)\cap \mathscr P(s_j)) \leq \calH^2(B(z,r)\cap \mathscr P(s_j))\leq\pi r^2/2,
\end{align*}
since $z\in \partial \mathscr P(s_j)$.

On the other hand, from the construction of $\tilde{T}$ it follows that there are at most $8$ such ``large pillowcases" $\mathscr P(s_j)$'s intersecting $\tilde{T}$, (two for every ``vertical curve" containing a vertical side of some $\tilde T_{\D_j}\subset \tilde T$). 
Therefore,
\begin{align}\label{est:ahlfors2}
\calH^2(B(x,r)\setminus \tilde T) \leq 4 \pi r^2.
\end{align}

Combining (\ref{est:ahlfors-lower-inS}), (\ref{est:ahlfors1}) and (\ref{est:ahlfors2}) we obtain that for every $x\in\S$ and $0<r\leq \diam \S$ the following holds:
\begin{align}\label{ineq:Ahlfors-reg-inS}
\calH^2(B(x,r)) \asymp r^2.
\end{align}

Finally, for $x\in\mathscr P_j$ there are three possibilities:
\begin{itemize}
\item[(1).] If $r<l(s_j)$ then there is a point $y\in B(x,r)$ such that $B(y,r/2)\subset \mathscr P_j$ and therefore $\mathcal{H}^2(B(x,r)\gtrsim r^2$. To get the upper estimate, first note that if $B(x,r)\cap s_j =\emptyset$ then $\calH^2(B(x,r))\leq \pi r^2$. On the other hand, if there exists $y\in B(x,r)\cap s_j$, then $B(x,r)\subset B(y,2r)$ and therefore by (\ref{ineq:Ahlfors-reg-inS}) we have $\calH^2(B(x,r))\lesssim r^2$.
\item[(2).] If $l(s_j)\leq r \leq 2l(s_j)$ then  $$\calH^2(B(x,r)) \geq \calH^2(B(x,\frac{r}{2}))\gtrsim r^2,$$
by part (1), since $r/2<l(s_j)$. On the other hand, since $\widehat{\mathscr{S}}$ is easily seen to be a metric doubling space, every ball $B(x,r)$ can be covered by $N$ balls $B_i=B(x_i,r/2)$ of radius $r/2<l(s_j)$, with $N$ independent of $x$. Therefore,  $\calH^2(B(x,r))\leq \sum_{i=1}^N \calH^2(B(x_i,r/2))\lesssim r^2$ by (\ref{ineq:Ahlfors-reg-inS}) and part (1).
\item[(3).] If $r> 2l(s_j) > \diam(\mathscr P_j)$ then there is a point $y\in B(x,r)\cap s_j$ such that 
\begin{align*}
B(y,\frac{r}{2})\subset B(x,r)\subset B(y,2r).
\end{align*} 
Therefore $\calH^2(B(x,r))\asymp r^2$ by (\ref{ineq:Ahlfors-reg-inS}). \qedhere
\end{itemize}
\end{proof}

\begin{proof}[Proof of Theorem  \ref{thm:embedding}]
Combining Lemma \ref{lemma:llc} and Lemma \ref{lemma:Ahlfors-reg} with Theorem \ref{quasisphere} we obtain a quasisymmetric mapping $g:\widehat{\mathscr{S}}\to\mathbb{S}^2$. By \cite{Haisinsky} $d_{\widehat{\mathscr S}}$ is comparable to the semi-metric $\tau$ (cf. Section \ref{Section:gluing}) when restricted to $\S\subset\widehat{\S}$. Since $\tau$ on $\S$ is equal to $d_{\S}$, it follows that  $id:(\S,d_{\S})\to(\S,d_{\widehat{\mathscr{S}}} |_{\S})$ is a bi-Lipschitz map. Therefore $f=g\circ id : \S\to\mathbb{S}^2$ is a quasisymmetric embedding.
\end{proof}

\section{Proof of Theorem \ref{thm:slit-carpets-TLP}} \label{Section:proof}
\begin{proof}
Let $\S=\S_{\mathbf{r}}$ be a dyadic slit carpet corresponding to $\mathbf{r}=\{r_i\}_{i=1}^{\infty}$. Suppose there is a quasisymmetric embedding of $\S$ into the plane. By Theorem \ref{thm:main} (or Corollary \ref{thm:necessity}) we have that $\sum r_i^2<\infty$. Let $\widehat{\S}_n$ be the $n$-th pillowcase surface obtained by gluing in pillowcases to the slits of the double of $\S_n$ like in Section \ref{Section:embedding}. Just like above $\widehat{\S}_n$ is Ahlfors $2$-regular and LLC with constants independent of $n$ (in fact with the same constants that work for $\widehat{\S}$ ). Since Bonk-Kleiner theorem is quantitative, see Remark \ref{remark:BK}, we have that for every $n\geq 1$ there is an $\eta$ - quasisymmetric maps $g_n:\widehat{\S}_n\to\mathbb{S}^2$ for a fixed $\eta$. Therefore there  are uniformly quasisymmetric embeddings $f_n:=g_n\circ id_n :\S_n\to\mathbb{S}^2$, since the inclusion maps $id_n:\S_n\to \widehat{\S}_n$ are uniformly bi-Lipschitz. Hence, $S_n$'s are $\Psi$ - TLP for a fixed $\Psi$ by Theorem \ref{thm:finitely-connected-embedding}, and the slit carpet $\S$ is TLP.

Conversely, if $\S$ is TLP then denoting by $L$ and $R$ the left and right vertical sides of the unit square and observing that 
$\delta(L,R) = 1$ we have
\begin{align*}
\mod_{\mathbb{U},\calK_n} \G(L,R) \geq \Psi(\Delta(L,R))\geq \Psi(1)>0.
\end{align*}
From Lemma \ref{lemma:transmod=0} it follows that $\sum r_i^2<\infty$.  Hence, by Theorem \ref{thm:main} there is a quasisymmetric embedding of $\S$ into $\mathbb{S}^2$.
\end{proof}

\end{document}